\documentclass[12pt]{amsart}

\textwidth=5in \textheight=8in

\usepackage{latexsym, amssymb}

\newcommand{\be}{\begin{equation}}
\newcommand{\ee}{\end{equation}}
\newcommand{\beq}{\begin{eqnarray}}
\newcommand{\eeq}{\end{eqnarray}}
\newcommand{\bee}{\begin{equation*}}
\newcommand{\eee}{\end{equation*}}
\newtheorem{thm}{Theorem}[section]

\newtheorem{lma}{Lemma}[section]
\newtheorem{prop}{Proposition}[section]
\newtheorem{cor}{Corollary}[section]
\theoremstyle{definition}

\theoremstyle{remark}
\newtheorem{rem}{Remark}[section]
\numberwithin{equation}{section}

\def\ep{\epsilon}
\def\E{\mathcal{E}}
\def\p{\partial}
\def\S{\Sigma}

\def\R{\mathbb{R}}

\def\Sp{\Sigma^\prime}

\def\p{\partial}
\def\lf{\left}
\def\ri{\right}
\def\e{\epsilon}

\def\R{\Bbb R}
\def\wt{\widetilde}
\def\la{\langle}
\def\ra{\rangle}

\def\hs{\hat\sigma}

\def\Ric{\text{\rm Ric}}
\def\div{\text{\rm div}}

\def\vh{\vspace{.3cm}}

\def\Pi{\displaystyle{\mathbb{II}}}

\def\E{\mathcal{E}}

\def\Sp{\mathcal{S}}
\def\X{\mathcal{X}}
\def\M{\mathcal{M}}
\def\MK{\mathbb{R}^{3,1}}
\def\mS{\mathcal{S}}

\def\mH{\mathcal{H}}
\def\hS{\hat \Sigma}
\def\mly{\mathfrak{m}_{_{LY}}}
\def\mby{\mathfrak{m}_{_{BY}}}
\def\mwy{\mathfrak{m}_{_{WY}}}
\def\Ewy{E_{_{WY}}}
\def\stop{\hfill$\Box$}
\def\factor{\frac1{8\pi}}

\def\Sp{\mathcal{S}}
\def\X{\mathcal{X}}
\def\M{\mathcal{M}}

\def\s{\sigma}
\def\bA{\mathbf{A}}
\def\bB{\mathbf{B}}
\def\bC{\mathbf{C}}
\def\bP{\mathbf{P}}

\begin{document}

\title[]{Critical points of  Wang-Yau quasi-local energy}

\author{Pengzi Miao$^1$}
\address[Pengzi Miao]{School of Mathematical Sciences, Monash University, Victoria 3800, Australia;
Department of Mathematics, University of Miami, Coral Gables, FL 33146, USA}
\email{Pengzi.Miao@sci.monash.edu.au, 
pengzim@math.miami.edu}

\author{Luen-Fai Tam$^2$}
\address[Luen-Fai Tam]{The Institute of Mathematical Sciences and Department of
 Mathematics, The Chinese University of Hong Kong,
Shatin, Hong Kong, China.}
\email{lftam@math.cuhk.edu.hk}

\author{Naqing Xie$^3$}
\address[Naqing Xie]{School of Mathematical Sciences, Fudan
University, Shanghai 200433, China}
\email{nqxie@fudan.edu.cn}

\thanks{$^1$ Research partially supported by Australian Research Council Discovery Grant  \#DP0987650}
\thanks{$^2$Research partially supported by Hong Kong RGC General Research Fund  \#GRF 2160357}
\thanks {$^3$Research partially supported by the National Science
Foundation of China \#10801036
and the Innovation Program of Shanghai Municipal Education Commission
\#11zz01}

\renewcommand{\subjclassname}{
  \textup{2010} Mathematics Subject Classification}
\subjclass[2010]{Primary 53C20; Secondary 83C99}\date{}

\begin{abstract}
In this paper, we prove the following theorem regarding
the Wang-Yau quasi-local energy of a spacelike two-surface in a spacetime:
Let $ \Sigma $ be a boundary component of
some compact, time-symmetric, spacelike hypersurface $ \Omega $
in a time-oriented spacetime $ N$ satisfying the dominant energy condition.
Suppose the induced metric  on $ \Sigma $
 has positive Gaussian  curvature and
all boundary components of $ \Omega $ have positive mean curvature.
Suppose
$
 H \le H_0
$
where $ H$ is   the mean curvature of $ \Sigma $ in $ \Omega$ and $ H_0$ is
the mean curvature of $ \Sigma$ when isometrically embedded in $ \R^3$.
If $ \Omega $ is not isometric to a domain in $ \R^3$, then
\begin{enumerate}

 \item the Brown-York mass  of $ \Sigma $ in $ \Omega$
 is  a strict local minimum of the Wang-Yau quasi-local energy of $ \Sigma$.

  \item on a small perturbation $ \tilde{\Sigma} $ of $ \Sigma $ in $ N$, there exists
 a critical point of the Wang-Yau quasi-local energy of $ \tilde{\Sigma}$.

 \end{enumerate}
\end{abstract}

\date{February, 2011}

\maketitle

\markboth{Pengzi Miao, Luen-Fai Tam, and Naqing Xie}
{Critical points of  Wang-Yau quasilocal energy}

\section{Introduction and statement of the result} \label{Introduction}

Let $ N $ be a space-time, i.e. a Lorentzian manifold of dimension four.
Suppose $ N $ is time orientable.
Denote the Lorentzian metric on $ N$ by $ \la \cdot , \cdot \ra$ and its covariant
derivative by $ \nabla^N$.
Let $ \S \subset N $ be an embedded, spacelike two-surface that is topologically a two-sphere.
Suppose the mean curvature vector $ H $ of $ \S $ in $ N $ is spacelike.
Let $ \sigma $ be the induced metric on $ \S$ and let $ K$ be the Gaussian curvature of $ (\S, \sigma)$.

 Given a function $ \tau $ on $ \S$ such that $ \hs = \sigma + d \tau \otimes d \tau $ is a metric  of positive Gaussian curvature on $ \S$, by  \cite[Theorem 3.1]{WangYau08}
 there exists an isometric embedding $X: (\Sigma,\sigma) \hookrightarrow \MK$ such that
 $ \tau $ is the time function of $ X$, i.e.
 $ X = (\hat{X}, \tau) , $
 where $ \hat{X} = (\hat{X}_1, \hat{X}_2, \hat{X}_3 )$ is an isometric
 embedding of $ (\Sigma, \hat{\sigma})$ in $ \R^3 = \{(x,0)\in \R^{3,1}\}$.
 The Wang-Yau quasi-local energy  \cite{WangYau-PRL, WangYau08}, associated to such a time function $ \tau$,  is given by
\begin{equation*} \label{wy-energy}
\begin{split}
\ &  \Ewy(\Sigma,\tau)   \\
=& \ \frac1{8\pi} \left\{
\int_{\hS}\hat H dv_{\hS}-\int_\Sigma\lf[ \sqrt{1+|\nabla\tau|^2}\cosh\theta |H|-\la\nabla\tau,\nabla \theta \ra-\la V,\nabla\tau\ra\ri]dv_\Sigma \right\} ,
\end{split}
\end{equation*}
where
\begin{itemize}
    \item $ \hS = \hat X ( \S) $, $ \hat H > 0 $ is the mean curvature of $ \hS $ in $ \R^3$,  $ d v_{\hS} $ and $ d v_\S$ are the volume forms of the metrics $ \hs $ and $ \sigma$.
    \item  $\sinh\theta=\frac{-\Delta \tau}{|H|\sqrt{1+|\nabla\tau|^2}}$, $ \nabla $ and $ \Delta$ are the gradient and the Laplacian operators of the metric $ \sigma$,  $ |  H | = \sqrt{ \la H, H \ra }$.
   \item $V$ is the tangent vector on $ \Sigma $ that is dual to the one form $\alpha^N_{e_3^{H}} (\cdot ) $ defined by $\alpha^N_{e_3^{H}}(X)=\la \nabla^N_X e_3^{H}, e_4^{H}\ra$ for any $ X $ tangent to $ \S$.
       Here $e_3^H=- \frac{H}{ |H| }$ and $e_4^H$ is the future timelike unit normal to $     \S$ that is orthogonal to $e_3^H$.
        \end{itemize}
The Wang-Yau quasi-local mass of $ \S$  \cite{WangYau-PRL, WangYau08}, which we denote by $\mwy(\Sigma)$,
is then defined to be
$$
\mwy(\Sigma) = \inf_{ \tau} \Ewy (\Sigma, \tau)
$$
where the infimum is taken over all functions $ \tau $ that are {\it admissible}
(see \cite[Definition 5.1]{WangYau08} for the definition of admissibility).

In  \cite{WangYau08}, Wang and Yau show that  a function $ \tau $ is a critical point of
$ \Ewy(\S, \cdot) $ if and only if  $\tau$ satisfies
 \be
\label{1stvariation-e1}
-  \lf[ \hat{H}\hat{\sigma}^{ab} - \hat{\sigma}^{ac} \hat{\sigma}^{bd} \hat{h}_{cd} \ri]
\frac{ \nabla_b \nabla_a \tau }{ \sqrt{ 1 + | \nabla \tau |^2 } }
+ \div_{\Sigma} \lf[ \frac{\cosh \theta |  H | }{\sqrt{ 1 + | \nabla \tau |^2} }\nabla \tau - \nabla \theta - V \ri]=0 ,
\ee
where $ \hs$, $ \hat H$, $ \theta$ and $ V $ are defined as above,
$\{a, b, c, d\}$ denote indices of local coordinates on $ \S$, $ \hat{h}_{ab} $ is the second fundamental form of
$ \hat \S $ in $ \R^3$ and $ \div_\Sigma (\cdot) $ denotes the divergence operator on $(\S, \sigma)$.

When the Gaussian curvature $ K $ of $(\Sigma, \sigma)$ is positive, the function
$ \tau_0 = 0 $ is admissible \cite[Remark 1.1]{WangYau08} and
$
\Ewy (\Sigma, \tau_0) = \mly (\Sigma),
$
where $ \mly (\Sigma) $ is the Liu-Yau quasi-local mass of $ \Sigma$ \cite{LY1, LY2}.
In this case, $ \tau_0 $ is a critical point of $ \Ewy(\Sigma, \cdot)$ if and only if
$ \div_\Sigma V = 0 $.

Now suppose  $ \Sigma $ is one of the boundary components of  a compact, time-symmetric, space-like hypersurface $ \Omega $ in $ N$,  then $ V = 0 $ and
 $ \Ewy (\Sigma, \tau_0 ) = \mby (\Sigma, \Omega) $, where  $ \mby(\Sigma, \Omega) $ is the Brown-York mass of $ \Sigma $ in $\Omega$ \cite{BY1, BY2}.
Considering the variational nature of $ \mwy(\Sigma)$,  one naturally wants to ask the following:

\vh

\noindent {\bf Question 1.} Suppose $ \Sigma $ is  a boundary component of  a compact, time-symmetric, space-like hypersurface $ \Omega $ in $ N$,  is the Brown-York mass $\mby(\Sigma,\Omega)$ a  local minimum value of the Wang-Yau quasi-local energy
 $ \Ewy (\Sigma, \cdot) $?

\vh

\noindent {\bf Question 2.}  Suppose $ \Sigma $ is  a boundary component of  a compact, time-symmetric, space-like hypersurface $ \Omega $ in $ N$,
is the set of solutions to \eqref{1stvariation-e1} open near the pair $(\Sigma,\tau_0)$? That is, suppose $ \tilde{\Sigma} \subset N $ is another closed, embedded,
spacelike two-surface which is a small perturbation of $ \Sigma$, does there
exist a solution $ \tau $ to \eqref{1stvariation-e1} with $ \Sigma$ replaced by $ \tilde{\Sigma}$?

\vh

Our main result in this paper is the following theorem:

\begin{thm}\label{Mainresult}
Let $ \Sigma $ be a boundary component of
some compact, time-symmetric, spacelike hypersurface $ \Omega $
in a time-oriented spacetime $ N$ satisfying the dominant energy condition.
Suppose the induced metric $ \sigma $ on $ \Sigma $
 has positive Gaussian  curvature and
all boundary components of $ \Omega $ have positive mean curvature.
Suppose
\be \label{Hcondition}
 H \le H_0
 \ee
where $ H$ is   the mean curvature of $ \Sigma $ in $ \Omega$ and $ H_0$ is
the mean curvature of $ \Sigma$ when isometrically embedded in $ \R^3$.
If $ \Omega $ is not isometric to a domain in $ \R^3$, then
\begin{enumerate}
 \item $\mby(\Sigma,\Omega)$ is a strict local minimum of $\Ewy(\Sigma,\cdot)$.

 \item  for $\tilde \Sigma\subset N$ near $\Sigma$, there is a solution $\tau$ to \eqref{1stvariation-e1} for $\tilde \Sigma$.

\end{enumerate}
\end{thm}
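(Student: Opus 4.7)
The plan is to reduce both statements to a single second-variation computation of $\Ewy(\S, \cdot)$ at the reference time function $\tau_0 = 0$.

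Since $\Omega$ is time-symmetric, $V \equiv 0$ on $\S$, hence $\div_\S V = 0$ and $\tau_0 = 0$ is a critical point of $\Ewy(\S, \cdot)$ with value $\mby(\S, \Omega)$. Writing $\tau = s\eta$ for a smooth function $\eta$ on $\S$ and expanding $\Ewy(\S, s\eta)$ to order $s^2$ via $\sinh\theta = -s\Delta\eta / H + O(s^3)$, $\cosh\theta = 1 + O(s^2)$, and $\sqrt{1+|\nabla\tau|^2} = 1 + \tfrac12 s^2 |\nabla\eta|^2 + O(s^4)$, the bulk portion of the energy yields, after one integration by parts in the $\la \nabla\tau,\nabla\theta\ra$ term,
\[
\frac{s^2}{16\pi}\int_\S \Big( \frac{(\Delta\eta)^2}{H} - H|\nabla\eta|^2 \Big)\, dv_\S + O(s^4).
\]
The embedding portion $\int_{\hS}\hat H\, dv_{\hS}$ depends on $\tau$ only through $\hs = \s + s^2\, d\eta\otimes d\eta$, so its $s^2$-coefficient equals the first variation of the total mean curvature $\int \hat H\, dv_{\hs}$ under the metric perturbation $h = d\eta\otimes d\eta$. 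I would compute this from Gauss--Codazzi for the embedding $\hat X$ in $\R^3$ and express it in terms of $H_0$, $\hh_0$, and derivatives of $\eta$, obtaining $\Ewy(\S, s\eta) = \mby(\S, \Omega) + s^2 Q(\eta) + O(s^3)$ with $Q$ a quadratic form on $C^\infty(\S)/\{\text{constants}\}$ (the invariance $\tau\mapsto \tau+c$ giving the kernel).

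To prove (1) I would recombine the two contributions to display
\[
Q(\eta) = \frac{1}{16\pi}\int_\S (H_0 - H)\, P(\eta)\, dv_\S + \frac{1}{16\pi}\, R(\eta),
\]
where $P(\eta)$ is a pointwise nonnegative quadratic expression in $\nabla\eta,\nabla^2 \eta$, and $R(\eta)$ is a quadratic form that is manifestly nonnegative via a Reilly-type identity on $\hS$ exploiting positive definiteness of the Newton tensor $H_0\, \hs - \hh_0$ (which follows from positive Gaussian curvature of $\s$). The pointwise assumption $H \le H_0$ then gives $Q(\eta) \ge 0$. For strictness on non-constant $\eta$, note that the dominant energy condition together with time-symmetry implies nonnegative scalar curvature on $\Omega$, so the Shi--Tam rigidity for the Brown--York mass applies: $\Omega$ not isometric to a domain in $\R^3$ forces $\int_\S H\, dv_\S < \int_\S H_0\, dv_\S$, whence $H < H_0$ on some open $U \subset \S$. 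Vanishing of $Q(\eta)$ then forces $\nabla\eta \equiv 0$ on $U$, and unique continuation for the $R$-part propagates this to all of $\S$.

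For (2) I would apply the implicit function theorem to the map $\mathcal F(\wt\S, \tau)$ given by the left-hand side of \eqref{1stvariation-e1}, regarded as a map between H\"older spaces (e.g., $C^{4,\alpha}$ surfaces $\wt\S \subset N$ near $\S$ and $C^{4,\alpha}$ zero-mean functions on $\wt\S$). Its $\tau$-linearization at $(\S, 0)$ is the self-adjoint fourth-order elliptic operator associated with $Q$, which by (1) is invertible on the zero-mean subspace. The implicit function theorem then yields a unique zero-mean $\tau = \tau(\wt\S)$ solving \eqref{1stvariation-e1} for each $\wt\S$ sufficiently close to $\S$. The main obstacle throughout is the algebraic work in part (1): computing the second variation of $\int_{\hS}\hat H\, dv_{\hS}$ in closed form (since the isometric embedding $\hat X_{s\eta}$ is not explicit), and then reshaping the combined quadratic form so that the one-sided hypothesis $H \le H_0$ can be invoked pointwise and the Shi--Tam deficit exploited to rule out null directions other than constants.
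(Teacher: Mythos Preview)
Your overall plan matches the paper's: compute the second variation of $\Ewy(\Sigma,\cdot)$ at $\tau_0=0$ and split the resulting quadratic form $Q(\eta)=\int_\Sigma\big[\frac{(\Delta\eta)^2}{H}+(H_0-H)|\nabla\eta|^2-\Pi_0(\nabla\eta,\nabla\eta)\big]\,dv_\Sigma$ as a pointwise-nonnegative $(H_0-H)$-weighted piece plus the Reilly piece $\int_\Sigma\big[\frac{(\Delta\eta)^2}{H_0}-\Pi_0(\nabla\eta,\nabla\eta)\big]\ge 0$ (valid for any closed mean-convex hypersurface bounding a domain in $\R^n$). For strictness the paper argues a bit differently from your sketch: the Reilly equality case forces $\eta$ to be the restriction of an affine function, and for such $\eta$ one computes directly $Q(\eta)\ge\int_\Sigma(H_0-H)=8\pi\,\mby(\Sigma,\Omega)$; hence $Q(\eta)=0$ forces $\mby=0$, so $H\equiv H_0$, and positive-mass/Penrose rigidity (the latter needed when $\partial\Omega$ is disconnected) then contradicts the hypothesis on $\Omega$. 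Your alternative route (Shi--Tam gives $H<H_0$ on some open $U$; then $\nabla\eta\equiv 0$ on $U$; an affine restriction to a strictly convex surface with an open set of critical points must be constant) can be made rigorous for connected $\partial\Omega$, but ``unique continuation'' is not the mechanism, and the disconnected-boundary case would still need separate treatment. Note also that in infinite dimensions strict positivity of $Q$ on nonconstants does not by itself yield a strict local minimum; the paper first upgrades this to a coercivity $Q(\eta)\ge\beta\int_\Sigma(\Delta\eta)^2$ via a compactness argument, and then expands $\Ewy(\Sigma,\tau)$ carefully to conclude $\Ewy(\Sigma,\tau)-\mby\ge\frac{\beta}{4}\int_\Sigma(\Delta\tau)^2$ for $\|\tau\|_{C^{3,\alpha}}$ small.

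The substantive gap is in part~(2). To apply the implicit function theorem you must know that the Euler--Lagrange map $(\tilde\Sigma,\tau)\mapsto\mH(\tilde\Sigma,\tau)$ is $C^1$ between H\"older spaces. The obstruction is the term $\hat h_{cd}$: it depends on $\tau$ through the second fundamental form of the isometric embedding of $(\Sigma,\sigma+d\tau\otimes d\tau)$ into $\R^3$, i.e.\ through the solution operator of the Weyl problem, a fully nonlinear Monge--Amp\`ere-type system. Fr\'echet differentiability of $\hat\sigma\mapsto\hat h$ between the natural $C^{k,\alpha}\to C^{k-2,\alpha}$ spaces is neither obvious nor, apparently, known. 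The paper devotes an entire section to this, following Nirenberg's linearization of the Weyl problem, and establishes $C^1$ regularity only with an additional loss of one derivative (from $C^{k,\alpha}$ metrics to $C^{k-3,\alpha}$ second fundamental forms); this is what forces the setup $C^{k,\alpha}\to C^{k-4,\alpha}$ for $\mH$ and the requirement $k\ge 5$. You treat this step as routine, but it is the principal analytic difficulty in part~(2).
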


\vspace{.1cm}

We note that there are many types of surfaces $ \Sigma $ that satisfy the condition \eqref{Hcondition} of Theorem \ref{Mainresult}.  Here we list a few of them:

\begin{enumerate}

 \item[(i)]  $ \Sigma = S_r $,  where $  S_r = \{ | x | = r \}$ is
a large coordinate sphere in
 a time-symmetric,  {\em asymptotically Schwarzschild} ({\bf AS}), spacelike slice $ M \subset N $.
 Here a three-Riemannian manifold $ M $  is  called {\bf AS} (with mass $m$)
 if  there is a compact set $ K \subset M$ such that
  $ M \setminus K $ is diffeomorphic to $ \R^3 \setminus \{ | x | \le R \} $ for some $ R $
   and the metric $g$ on $ M$ with respect to the standard coordinates on $ \R^3 $ takes the form
$$  g_{ij} = \lf( 1 + \frac{ m}{  2r} \ri)^4 \delta_{ij} + b_{ij} $$
where $ |\p^k b_{ij}| = O \lf( r^{-2 - k } \ri), $  $0\le k \le 3$, $r=|x|$ and $ m $ is a constant.
Direct calculation  (see (5.1) in \cite{Huang08-centermass} for example) gives
$$
H = \frac{2}{r} - \frac{4m}{r^2} + O ( r^{-3} ) .
$$
On the other hand, it was proved in \cite{ShiTam02} (the equation on the bottom of page 122) that
$$
H_0 = \frac{2}{r} - \frac{2m}{r^2} + O ( r^{-3} ) .
$$
Therefore,  $ H < H_0$ for large $r$ if $ M $ has positive mass $ m$.

\vspace{.1cm}

  \item [(ii)]  $ \Sigma $ bounds a compact, time-symmetric
  spacelike slice $ \Omega$ and  $ \Sigma $ has constant positive Gaussian curvature and
 constant positive mean curvature $ H$.  In this case, by the results in   \cite{Miao02, ShiTam02}
 one knows $ H \leq  H_0 $ and $ H = H_0 $ if and only if $ \Omega $ is isometric to a Euclidean
  round ball.

\vspace{.1cm}

  \item[(iii)]    $ \Sigma $ bounds a compact, time-symmetric
  spacelike slice $ \Omega$ and $ \Sigma $ has positive Gaussian curvature and
  positive mean curvature.
 Suppose   there exists a conformal
  diffeomorphism $ f: \Omega \rightarrow \Omega_0$ between $\Omega $
  and a domain $ \Omega_0 $ in $ \R^3 $ such that
  $ f^* ( g_0) $ and $ g $ induce the same boundary metric on $ \Sigma $ and
  $ \Sigma $ has positive mean curvature in $(\Omega, f^*( g_0) )$. Here
  $ g $ is the metric on $ \Omega$ and   $ g_0 $ is the
  Euclidean metric on $ \Omega_0$.
  In this case, if one writes $ g = u^4 f^* ( g_0 ) $,
   it follows from the maximum principle (applied to $ u $)
   that $ H \le  H_0$ on $ \Sigma$ and $ H = H_0 $ precisely when
   $ \Omega $ is isometric to  $ \Omega_0$.

\vspace{.1cm}

\item[(iv)]  When viewed purely as a result on the Riemannian $3$-manifold $ \Omega$,
Theorem \ref{Mainresult} applies to those $ \Omega$ that are graphs over convex Euclidean domains.
Precisely, let $ \Sigma $ be a strictly convex closed surface in $ \R^3$ and let $ \Omega_0 \subset \R^3 $
be  its interior.  Let $ f: \Omega_0 \rightarrow \R$ be a smooth function such that $ f |_{\Sigma} = 0 $.
Let $ \Omega $ be the graph of $f $ in $ \R^4$ with the induced metric and let $ H $ be the mean curvature
of $ \Sigma $ in $ \Omega$. Directly calculation shows $  H = \frac{1}{\sqrt{ 1 +  | \nabla f |^2 } } H_0 \le H_0 $.
   The motivation to consider these $\Omega$
   (with $ f $ chosen such that $ \Omega$ has nonnegative scalar curvature)
   comes from a recent work of Lam \cite{Lam2010}
   on the graphs cases of the Riemannian positive mass theorem and Penrose inequality.

\end{enumerate}

\vspace{.2cm}

We should mention that related to (i) above,  Chen-Wang-Yau   \cite[Section 4]{ChenWangYau}
under the assumption of analyticity
 show that in asymptotically flat space-times, \eqref{1stvariation-e1} has a
formal power series solution, which is  locally energy minimizing at all orders, for
certain surfaces in  an asymptotically flat hypersurface.

This paper is organized as follows:
In Section \ref{localmin}, we compute the second variation  of $ \Ewy(\Sigma, \tau )$ at $ \tau_0 = 0 $ and derive a sufficient condition for $ \mby(\Sigma, \Omega)$  to locally minimize $ \Ewy(\Sigma, \tau)$. In Section \ref{moreonHcondition}, we prove that the sufficient condition provided in Section \ref{localmin}  holds for those surfaces $ \Sigma $ satisfying the assumptions in Theorem \ref{Mainresult}. Hence, part (1) of Theorem \ref{Mainresult} follows from Section \ref{localmin} and \ref{moreonHcondition}. We note that, besides playing a key role in the proof of Theorem \ref{Mainresult}, Theorem \ref{Mainresult-1} in Section \ref{moreonHcondition}  concerns analytical features of the boundary of  compact Riemannian manifolds with nonnegative scalar curvature, thus is of independent interest.
In Sections \ref{secondfform}  and \ref{smallsolution},
we focus on  part (2) of Theorem \ref{Mainresult}.
The main idea there is to apply the Implicit Function Theorem ({IFT}). But to apply the {IFT}, we are confronted with the problem to show that the map $ F $,
sending a metric $ \sigma $ of positive Gaussian curvature on the two-sphere $ S^2$ to the second fundamental form $ \Pi $ of the isometric embedding of $(S^2, \sigma)$ in $ \R^3$, is a $ C^1 $ map between appropriate functional spaces. If $ \sigma $ is
a $ C^{k, \alpha}$ ($k \ge 2$) metric, by \cite{Nirenberg} one knows $ \Pi $ is a $ C^{k-2, \alpha}$ symmetric tensor. We do not know whether $ F $ is
$C^1$  from the $C^{k, \alpha}$ space to the $ C^{k-2, \alpha} $ space. However,
in Section \ref{secondfform},  we prove that $ F $ is $C^1$  between $ C^{k, \alpha} $ and $ C^{k-3, \alpha}$ spaces for $ k \ge 4$.
This turns out to be sufficient to apply the {IFT} to obtain  solutions
to \eqref{1stvariation-e1} because the metric $ \hat \sigma $ in \eqref{1stvariation-e1}  involves $ d \tau $ and \eqref{1stvariation-e1} is a 4-th order differential equation of the function $ \tau$. In Section \ref{smallsolution}, we apply the result in
Sections \ref{moreonHcondition}, \ref{secondfform} and the {IFT} to prove  the existence of critical points of $\Ewy(\tilde{\Sigma}, \cdot) $ for surfaces $ \tilde{\Sigma} $ nearby.

We want to thank Michael Eichmair for helpful discussions leading to Proposition \ref{Reilly-t}.

\section{Comparing $ \mby(\Sigma, \Omega) $ and $ \Ewy(\Sigma, \cdot) $}\label{localmin}


We start this section by  computing the second variation of $ \Ewy(\Sigma, \cdot)$ at $ \tau_0=0 $,
assuming $ \tau_0 $ is a critical point for $ \Ewy(\Sigma, \cdot)$.

\begin{prop}\label{2ndvar-l1}
Let $ N $ be a time-oriented spacetime.
Let $ \Sigma \subset N $ be an embedded, spacelike two-surface that is topologically a two-sphere.
Suppose the mean curvature vector $ H $ of $ \S $ in $ N $ is spacelike.
If $ \tau_0 = 0 $ is a critical point for  $ \Ewy(\Sigma, \cdot)$, then  the second variation of $\Ewy(\Sigma, \cdot)$ at $\tau_0=0$ is given by
  \be \label{2ndvarformula}
\begin{split}
  & \delta^2  E_{_{WY}} ( \Sigma, \tau ) |_{ \tau = 0 }(\delta\tau)\\
   & =\factor \int_\Sigma \lf[ \frac{ ( \Delta \delta\tau)^2 }{ | H |  } + ( H_0 - | H | ) | \nabla (\delta\tau) |^2 - \Pi_0 ( \nabla \delta\tau ,\nabla \delta\tau)\ri]dv_{\Sigma }
\end{split}
\ee
where $H_0$ and $\Pi_0$ are the mean curvature and the second fundamental form of $(\Sigma, \sigma) $ when isometrically embedded in $\R^3$, and $\sigma$ is the induced metric on $\Sigma$ from $N$.
  \end{prop}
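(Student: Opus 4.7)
The plan is to substitute $\tau = s\,\delta\tau$ for a small parameter $s$ and Taylor expand $\Ewy(\Sigma, s\,\delta\tau)$ to order $s^2$, reading off the coefficient of $s^2$ as $\frac12\,\delta^2\Ewy$. It is convenient to split $8\pi\,\Ewy = A - B + C + D$ into the four natural pieces $A = \int_{\hS}\hat H\, dv_{\hS}$, $B = \int_\Sigma \sqrt{1+|\nabla\tau|^2}\cosh\theta\,|H|\, dv_\Sigma$, $C = \int_\Sigma \langle\nabla\tau,\nabla\theta\rangle\, dv_\Sigma$, and $D = \int_\Sigma \langle V,\nabla\tau\rangle\, dv_\Sigma$. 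Two observations simplify the bookkeeping: $\hs = \sigma + s^2\, d\delta\tau\otimes d\delta\tau$ depends on $s$ only through $s^2$, and $\theta = -s\,\Delta\delta\tau/|H| + O(s^3)$. The criticality hypothesis is equivalent to $\div_\Sigma V = 0$, which is exactly what makes $D'(0) = 0$ so that $\tau_0 = 0$ is indeed critical.

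For $B$, $C$, $D$ the expansion is routine. A direct computation gives $\sqrt{1+|\nabla\tau|^2}\cosh\theta = 1 + \frac{s^2}{2}\bigl(|\nabla\delta\tau|^2 + (\Delta\delta\tau)^2/|H|^2\bigr) + O(s^4)$, hence $B''(0) = \int\bigl[|H|\,|\nabla\delta\tau|^2 + (\Delta\delta\tau)^2/|H|\bigr]\, dv_\Sigma$. For $C$ one writes $\langle\nabla\tau,\nabla\theta\rangle = -s^2\langle\nabla\delta\tau,\nabla(\Delta\delta\tau/|H|)\rangle + O(s^4)$ and integrates by parts to obtain $C''(0) = 2\int (\Delta\delta\tau)^2/|H|\, dv_\Sigma$. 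Finally $D(s) = s\int\langle V,\nabla\delta\tau\rangle\, dv_\Sigma$ is linear in $s$, so $D''(0) = 0$.

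The substantive step is $A$. Because $\hs$ depends on $s$ only through $s^2$, one has $A(s) = G(s^2)$ where $G(t) = \int \hat H_t\, dv_{\hs_t}$ along the linear family $\hs_t = \sigma + t\, d\delta\tau\otimes d\delta\tau$, so $A''(0) = 2G'(0)$. To evaluate $G'(0)$ I will invoke the first variation identity for $\int \hat H\, dv_{\hs}$ under a metric perturbation $h = \delta\hs$ of an isometrically embedded surface in $\R^3$:
\[
\delta\int_{\hS}\hat H\, dv_{\hs} \,=\, \frac12 \int_{\hS}\bigl(\hat H\,\hs^{ab} - \hh^{ab}\bigr) h_{ab}\, dv_{\hs}.
\]
Specialising to $\hs = \sigma$ and $h = d\delta\tau\otimes d\delta\tau$ gives $G'(0) = \frac12 \int \bigl[H_0|\nabla\delta\tau|^2 - \Pi_0(\nabla\delta\tau,\nabla\delta\tau)\bigr]\, dv_\Sigma$. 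Assembling $A''(0) - B''(0) + C''(0) + D''(0)$ and dividing by $8\pi$ produces \eqref{2ndvarformula}.

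The main obstacle is to justify this variation formula, which requires tracking how the isometric embedding itself moves. I would parametrise the one-parameter family of embeddings over $\Sigma$ and decompose the embedding's variation vector field as $Y = Y^T + f\nu$. The isometry condition forces $h_{ab} = \nabla_a Y_b + \nabla_b Y_a + 2f\,\Pi_{0,ab}$. Using $\delta\hat H = -\Delta f - |\Pi_0|^2 f + Y^T\cdot\nabla H_0$ together with $\delta(\log dv_{\hs}) = \div Y^T + f H_0$ and the Gauss equation $H_0^2 - |\Pi_0|^2 = 2K$, a short calculation (in which the $-\Delta f$ term and the divergence $\div(H_0 Y^T)$ both integrate to zero) gives $\delta\int \hat H\, dv_{\hs} = 2\int f K\, dv_\Sigma$. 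One then checks, using the Codazzi equation $\nabla_a \Pi_0^{ab} = \nabla^b H_0$ to cancel the $Y^T$-dependence under integration by parts, that the right-hand side of the displayed identity evaluates to the same quantity $2\int f K\, dv_\Sigma$. The resulting identity is in the spirit of, and can alternatively be extracted from, Wang--Yau's derivation of \eqref{1stvariation-e1}.
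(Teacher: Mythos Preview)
Your argument is correct and arrives at the same formula, but it takes a different route from the paper. The paper differentiates Wang--Yau's first variation formula \eqref{1stvar-wy}: since
\[
\delta \Ewy(\Sigma,\tau)(\delta\tau)=\frac{1}{8\pi}\int_\Sigma \mH(\tau)\,\delta\tau\,dv_\Sigma,
\]
the second variation at $\tau_0=0$ is $\frac{1}{8\pi}\int_\Sigma \delta\mH(0)(\delta\tau)\cdot\delta\tau\,dv_\Sigma$, and one only needs to linearise $\mH$ at $0$ (equation \eqref{1stvar-htau}) and integrate by parts using that $H_0\sigma-\Pi_0$ is divergence free. Your approach instead expands the energy itself to second order in $s$, splitting it into four pieces and handling the embedding term $A$ via the metric-variation identity $\delta\!\int\hat H\,dv_{\hs}=\tfrac12\int(\hat H\hs^{ab}-\hh^{ab})h_{ab}\,dv_{\hs}$, for which you supply an independent derivation. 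The paper's route is shorter because the embedding variation is already packaged inside Wang--Yau's first variation; your route is more self-contained but requires you to redo that piece (indeed your displayed identity is exactly the $\tau$-contribution to \eqref{1stvar-wy}, and your observation that $\hs$ depends on $s$ only through $s^2$ is what replaces the paper's use of $\mH(0)=0$ to kill the would-be first-order term from $A$). Both arguments ultimately hinge on the same two ingredients: the linearisation of $\theta$ as $-\Delta\delta\tau/|H|+O(s^3)$ and the divergence-free property of $H_0\sigma-\Pi_0$.
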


\begin{proof} The first variation of $ \Ewy(\Sigma, \cdot) $ was obtained by
Wang and Yau in \cite[Proposition 6.2]{WangYau08} and is given by
\be\label{1stvar-wy}
 \begin{split}
  \delta \Ewy( \Sigma, \tau) ( \delta \tau) & =
  \frac{1}{8 \pi } \int_\Sigma
 \bigg\{ - \lf[  \hat H   { \hat \sigma}^{ab}-  { \hat \sigma}^{ac} { \hat \sigma}^{bd} ( \hat h_{cd}) \ri]
\frac{ \nabla_b \nabla_a  \tau }{ \sqrt{ 1 + | \nabla \tau |^2 } }\\
& \
+ \div_\Sigma \lf[ \frac{ \nabla  \tau}{\sqrt{ 1 +  | \nabla \tau |^2} } \cosh \theta  | H |   - \nabla \theta
- V  \ri] \bigg\} \cdot \delta \tau  \ d v_{\Sigma} .
\end{split}
\ee
Let $ \mH (\tau )$ denote the functional
\be
\begin{split}
 - \lf[  \hat H   { \hat \sigma}^{ab}-  { \hat \sigma}^{ac} { \hat \sigma}^{bd} ( \hat h_{cd}) \ri]
\frac{ \nabla_b \nabla_a  \tau }{ \sqrt{ 1 + | \nabla \tau |^2 } }
+ \div_\Sigma \lf[ \frac{ \nabla  \tau}{\sqrt{ 1 +  | \nabla \tau |^2} } \cosh \theta  | H |   - \nabla \theta
- V  \ri] .
\end{split}
\ee
Direct computation shows that the first variation of
$ \mH (\cdot) $ at $ \tau = 0 $ is
\be \label{1stvar-htau}
\delta \mH ( \tau ) |_{\tau = 0 } ( \delta \tau ) =
- \la H_0 \sigma - \Pi_0, \nabla^2 \delta \tau \ra
+ \div_\Sigma \lf(  | H |  \nabla  \delta \tau \ri)
 +  \Delta \lf( \frac{ \Delta \delta \tau }{ | H | } \ri),
\ee
where $ \nabla^2 $ denotes the Hessian  operator on $(\Sigma, \sigma)$.
\eqref{2ndvarformula} now follows from \eqref{1stvar-wy}, \eqref{1stvar-htau}
and the fact that $ H_0 \sigma - \Pi_0 $ is divergence free on $ (\Sigma, \sigma)$.
\end{proof}

Assuming the quadratic functional of $ \delta \tau $ in \eqref{2ndvarformula}
has certain positivity property, we show that
 $ \tau_0 = 0 $ is a strict local minimum point
for $ \Ewy (\Sigma, \cdot)$.

\begin{thm}\label{minimum-t1}
Let $ \Sigma $ be a boundary component of
some compact, time-symmetric, spacelike hypersurface $ \Omega $
in a time-oriented spacetime $ N$ satisfying the dominant energy condition.
 Suppose the induced metric $ \sigma $ on $ \Sigma $
 has positive Gaussian  curvature  and the mean curvature $ H$ of $ \Sigma $
in $ \Omega $  is positive.
Suppose in addition that there exists a constant $\beta>0$ such that
\be\label{assumption-e2}
 \int_\Sigma \lf[ \frac{ ( \Delta \eta )^2 }{ H  } + ( H_0 - H ) | \nabla \eta  |^2 - \Pi_0 ( \nabla \eta ,\nabla \eta)\ri]dv_{\Sigma}\ge \beta\int_\Sigma (\Delta\eta)^2dv_{\Sigma}
\ee
for all $\eta\in W^{2,2}(\Sigma)$, where $H_0$ and $\Pi_0$ are the mean curvature and the second fundamental form of $(\Sigma,\sigma)$ when isometrically embedded in $\R^3$.
Then, for  any constant $0<\alpha<1$, there exists a constant $\e > 0 $
depending only on $ \sigma$, $ H $ and $ \beta$, such that
 \be\label{minimum-e1}
 \Ewy(\Sigma,\tau) -  \mby(\Sigma,\Omega) \ge \frac{\beta}{4}  \int_\Sigma ( \Delta \tau)^2 d v_\Sigma
\ee
 for any smooth function $\tau$ with $||\tau||_{C^{3,\alpha}}<\e$.
 \end{thm}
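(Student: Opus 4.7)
The plan is to perform a Taylor expansion of $\Ewy(\Sigma, \tau)$ at $\tau_0 = 0$: the constant term equals $\mby(\Sigma, \Omega)$, the first variation vanishes, the second-order term is bounded below by a positive multiple of $\int(\Delta\tau)^2$ via Proposition~\ref{2ndvar-l1} and the hypothesis \eqref{assumption-e2}, and the higher-order remainder is absorbed by taking $\|\tau\|_{C^{3,\alpha}}$ small.

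First I would record the two preliminaries at $\tau_0 = 0$. Because $\Omega$ is time-symmetric, $V \equiv 0$ on $\Sigma$, and at $\tau = 0$ every other quantity appearing in \eqref{1stvariation-e1} involves $\nabla\tau$, $\nabla^2\tau$ or $\theta$ and so vanishes pointwise; hence the Euler--Lagrange expression reduces to $-\mathrm{div}_\Sigma V = 0$ and $\tau_0 = 0$ is a critical point. Evaluating the energy at $\tau = 0$, where $\hs = \sigma$, $\hat H = H_0$, $\theta = 0$ and $V = 0$, gives $\Ewy(\Sigma, 0) = (8\pi)^{-1}\int_\Sigma(H_0 - H)\, dv_\Sigma = \mby(\Sigma, \Omega)$. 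Applying Proposition~\ref{2ndvar-l1} together with assumption \eqref{assumption-e2} in \eqref{2ndvarformula} then yields
$$
\tfrac{1}{2}\,\delta^2\Ewy(\Sigma, 0)(\tau) \ \ge\ \frac{\beta}{16\pi}\int_\Sigma(\Delta\tau)^2\, dv_\Sigma.
$$

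The core of the argument is the remainder estimate
$$
\Big|\, \Ewy(\Sigma,\tau) - \mby(\Sigma,\Omega) - \tfrac{1}{2}\,\delta^2\Ewy(\Sigma,0)(\tau)\,\Big| \ \le\ C\,\|\tau\|_{C^{3,\alpha}}\int_\Sigma(\Delta\tau)^2\, dv_\Sigma,
$$
valid for $\|\tau\|_{C^{3,\alpha}} < \e$, with $C$ depending only on $\sigma$ and $H$. The algebraic pieces of the integrand — $\sqrt{1+|\nabla\tau|^2}$, $\sinh\theta = -\Delta\tau/(H\sqrt{1+|\nabla\tau|^2})$, $\cosh\theta$, and $\nabla\theta$ — are smooth functions of $\nabla\tau, \nabla^2\tau$ and $\nabla^3\tau$; I would expand each pointwise to second order in $\tau$, so that every third-order remainder is dominated pointwise by $\|\tau\|_{C^2}$ times a quadratic expression in the derivatives of $\tau$. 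After integration, such terms are controlled by $\|\tau\|_{C^{3,\alpha}}\int(\Delta\tau)^2$ once one normalizes $\tau$ to have zero mean (a harmless reduction since $\Ewy(\Sigma,\tau+c) = \Ewy(\Sigma,\tau)$) and uses standard elliptic estimates to bound $\|\nabla\tau\|_{L^2}^2$ and $\|\tau\|_{L^2}^2$ by $\|\Delta\tau\|_{L^2}^2$.

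The main obstacle I anticipate is controlling the first integral $\int_{\hS}\hat H\, dv_{\hS}$, since $\hat H$ and the induced volume form depend on $\hs = \sigma + d\tau\otimes d\tau$ through the Weyl isometric embedding into $\R^3$ — a nonlinear PDE whose solution has no closed-form dependence on $\hs$. I would handle this by invoking the Nirenberg regularity theorem (also alluded to before Section~\ref{secondfform}) to conclude that $\hs \mapsto \int_{\hS}\hat H\, dv_{\hS}$ is at least twice continuously differentiable in an appropriate H\"older topology on a neighborhood of $\sigma$. Since the map $\tau \mapsto d\tau\otimes d\tau$ is smooth from $C^{3,\alpha}(\Sigma)$ into $C^{2,\alpha}$ symmetric tensors and vanishes quadratically at $\tau = 0$, its contribution to the Taylor remainder satisfies the same bound as above. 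With this in hand, choosing $\e$ so small that $C\e \le \beta/(16\pi)$ gives \eqref{minimum-e1} up to the stated constant.
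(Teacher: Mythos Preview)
Your overall strategy---Taylor expand at $\tau_0=0$, identify the terms, bound the remainder---is close in spirit to the paper's, but your treatment of the embedding-dependent term $\int_{\hS}\hat H\,dv_{\hS}$ has a genuine gap.

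You assert that $\hs\mapsto\int_{\hS}\hat H\,dv_{\hS}$ is $C^2$ in a H\"older topology ``by the Nirenberg regularity theorem.'' Nirenberg's work gives existence and a \emph{Lipschitz} estimate (quoted in the paper as \eqref{minimum-e2}), namely $\|X(\tilde\sigma)-X(\sigma)\|_{C^{2,\alpha}}\le C\|\tilde\sigma-\sigma\|_{C^{2,\alpha}}$, but no $C^2$ (nor even $C^1$) differentiability of the embedding map. Indeed the paper devotes the whole of Section~\ref{secondfform} to proving that $\sigma\mapsto\Pi(X(\sigma))$ is merely $C^1$, and only with a loss of one derivative ($C^{k,\alpha}\to C^{k-3,\alpha}$, $k\ge 4$); the authors remark explicitly that $C^1$ without this loss is open. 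Asserting $C^2$ at the regularity level $\hs\in C^{2,\alpha}$ relevant here is therefore unjustified, and without it your remainder bound for the embedding term has no proof.

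The paper sidesteps this difficulty. Rather than a direct second-order Taylor expansion, it introduces the path $\sigma(s)=\sigma+s^2\,d\tau\otimes d\tau$, $0\le s\le 1$, and computes $\frac{d}{ds}\Ewy(\Sigma,s\tau)$ via Wang--Yau's \emph{first} variation formula \eqref{1stvar-wy}. After an integration by parts, the embedding-dependent coefficients $H_0(s),\Pi_0(s)$ appear only as multipliers of $\tau_a\tau_b$, which is already quadratic in $\tau$; hence one needs nothing more than $\|\Pi_0(s)-\Pi_0\|_{C^{0,\alpha}}\le C\|\tau\|_{C^{3,\alpha}}^2$, and for this the Lipschitz bound \eqref{minimum-e2} alone suffices. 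This yields $\frac{d}{ds}\Ewy(s)\ge s\bigl(\beta-C\|\tau\|_{C^{3,\alpha}}^2\bigr)\int_\Sigma(\Delta\tau)^2\,dv_\Sigma$, and integrating in $s$ from $0$ to $1$ gives \eqref{minimum-e1}. The point you are missing is that working with the first variation along a path downgrades the required control on the Weyl embedding from differentiability to mere $C^0$ closeness of $\Pi_0(s)$ to $\Pi_0$.
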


\begin{proof}
Let $ X(\sigma)$ be a fixed isometric embedding of $(\Sigma, \sigma)$
in $ \R^3$.  By \cite[p.353]{Nirenberg},  there exist positive constants $C_1 $
and $\e_1$, depending only on $\sigma $, such that if $\tilde{\sigma}$ is another $C^{2,\alpha}$ metric on $\Sigma$ with $||\tilde{\sigma}-\sigma ||_{C^{2,\alpha} } <\e_1 $, then
$ \tilde{\sigma} $ has positive Gaussian curvature and there exists an isometric embedding $X(\tilde{\sigma})$ of $(\Sigma, \tilde{\sigma})$ in $\R^3$ such that
\be\label{minimum-e2}
||X(\tilde{\sigma})-X(\sigma)||_{C^{2,\alpha}}\le C_1  ||\tilde{\sigma}-\sigma||_{C^{2,\alpha}}  .
\ee

Now,   let $\tau$ be any given smooth function with $||\tau||_{C^{3,\alpha}}^2<\e_1 $.
Let $\sigma(s)=\sigma +s^2d\tau\otimes d\tau$, $ 0 \le s \le 1$. Then
$$ || \sigma(s) - \sigma ||_{C^{2,\alpha}} \le || d \tau \otimes d \tau ||_{C^{2, \alpha}}
\le || \tau ||_{C^{3, \alpha}}^2 < \e_1  .$$
Hence, $\sigma(s)$ has positive Gaussian curvature and there exists an isometric embedding $X(s)$ of $( \Sigma, \sigma(s) )$ in $\R^3$ such that
\be\label{embeddings-e1}
||X(s)-X(0)||_{C^{2,\alpha}}\le C_1  ||\tau||_{C^{3,\alpha}}^2
\ee
where $X(0)=X(\sigma)$. Let $H_0(s)$ and $\Pi_0(s)$ be the mean curvature and the second fundamental form of $X(s) (\Sigma) $. Let $ d v_{\sigma(s)} $  be the volume  form of $ \sigma(s)$.  For simplicity, denote
$\Ewy(\Sigma,\sigma(s))$ by $\Ewy(s)$. By \eqref{1stvar-wy} (and also the fact $ V = 0$), we have
\be\label{1stvar-e1}
 \begin{split}
  \frac{d}{ds}\Ewy(s) = &  \
 \frac{1}{8 \pi}  \int_\Sigma
 \bigg\{ - \lf[  {H_0}(s)   {\sigma}^{ab}(s) -  {\sigma}^{ac}(s) {\sigma}^{bd}(s) (\Pi_0(s))_{cd}) \ri]
\frac{ s\nabla_b \nabla_a  \tau }{ \sqrt{ 1 + s^2| \nabla \tau |^2 } }\\
& \
+ \div_\Sigma \lf[ \frac{ s\nabla  \tau}{\sqrt{ 1 + s^2| \nabla \tau |^2} } H\cosh \theta   - \nabla \theta \ri]
\bigg\} \tau  \ d v_{\Sigma}\\
= & \
\frac{1}{8 \pi}   \bigg\{ \int_\Sigma s\lf[   {H_0}(s)    {\sigma}^{ab}(s) -  {\sigma}^{ac}(s) {\sigma}^{bd}(s) (\Pi_0(s))_{cd}) \ri] \tau_a\tau_b  \sqrt{ 1 + s^2| \nabla \tau |^2 }    \ d v_{\Sigma}\\
& \ -\int_\Sigma \frac {s|\nabla \tau|^2}{\sqrt{ 1 + s^2| \nabla \tau |^2} }H\cosh \theta \ d v_{_\Sigma}-\int_\Sigma\theta\Delta \tau \ d v_{\Sigma}  \bigg\}
\end{split}
\ee
where we have used the facts that $ {H_0}(s)   {\sigma} (s) -  \Pi_0 (s)$ is divergence free with respect to $\sigma(s)$,
$dv_{\sigma(s)}= (1+s^2|\nabla \tau|^2)^\frac12\, dv_{\Sigma}$ and
\be
\frac{  \nabla_b \nabla_a  \eta }{  { 1 + s^2| \nabla \tau |^2 } }= \nabla_b^s  \nabla_a^s  \eta
\ee
for any function $ \eta $ on $ \Sigma$.
Here $\nabla^s$ denotes  the covariant derivative of $\sigma(s)$,
and $\theta=\theta(s)$ is the function defined by
\be
\sinh \theta =\frac{-s\Delta \tau}{H\sqrt{ 1 + s^2| \nabla \tau |^2}}.
\ee

We estimate the expression in \eqref{1stvar-e1} term by term.
First note that
\be
   \cosh\theta-1 \le  \sinh^2\theta,  \ \
    |\theta-\sinh\theta|\le |\sinh^3\theta|, \ \  \forall \ \theta \in \R.
\ee
Therefore,
\be\label{est-e1}
\begin{split}
\int_\Sigma\theta\Delta \tau \ d v_{\Sigma}=& \int_\Sigma\sinh\theta\Delta \tau \ d v_{\sigma_0}+\int_\Sigma(\theta-\sinh\theta)\Delta \tau \ d v_{\Sigma}\\
=&-\int_\Sigma \frac{s(\Delta\tau)^2}{H}  \ d v_{\Sigma}+F_1
\end{split}
\ee
where
\be
|F_1|\le C_2 s^3  ||\tau||_{C^{2,\alpha}}^2 \int_\Sigma \lf [|\nabla\tau|^2+(\Delta\tau)^2 \ri]    d v_{\Sigma}
\ee
for some constant $C_2 $ depending only on $H$. Similarly,
\be\label{est-e2}
\begin{split}
\int_\Sigma &\frac {s|\nabla \tau|^2}{\sqrt{ 1 + s^2| \nabla \tau |^2} }H\cosh \theta  \ d v_{\Sigma}\\
=& \int_\Sigma \frac {s|\nabla \tau|^2}{\sqrt{ 1 + s^2| \nabla \tau |^2} } H\ d v_{\Sigma}+\int_\Sigma \frac {s|\nabla \tau|^2}{\sqrt{ 1 + s^2| \nabla\tau |^2} }H(\cosh \theta-1)  \ d v_{\Sigma}\\
=&\int_\Sigma s|\nabla \tau|^2   H\ d v_{\Sigma}+F_2
\end{split}
\ee
where
\be
 |F_2|\le C_3 s^3 ||\tau||_{C^{2,\alpha}}^2
\int_\Sigma [ |\nabla \tau|^2 + ( \Delta \tau )^2 ] d v_{\Sigma}
\ee
for some constant $C_3$ depending only on $H$.
Next, by \eqref{embeddings-e1} we have
\be
||\Pi_0(s)-\Pi_0 ||_{C^{0,\alpha}}\le C_4 ||\tau||_{C^{3,\alpha}}^2
\ee
for some constant $C_4$ depending only on $\sigma$. This, together with the fact that $||\sigma(s)-\sigma ||_{C^{2,\alpha}}\le ||\tau||_{C^{3,\alpha}}^2$ implies
\be\label{est-e3}
\begin{split}
\int_\Sigma s   {H}_0(s)   {\sigma}^{ab}(s)  \tau_a\tau_b  \sqrt{ 1 + s^2| \nabla \tau |^2 }    \ d v_{\sigma_0}\
=& \int_\Sigma sH_0 |\nabla \tau|^2d v_{\Sigma}+F_3
\end{split}
\ee
where
\be
|F_3|\le   C_5 s ||\tau||_{C^{3,\alpha}}^2  \int_\Sigma |\nabla \tau|^2 d v_{\Sigma}
\ee
 for some constant $C_5$ depending only on $\sigma$.  Similarly,
\be\label{est-e4}
\begin{split}
\int_\Sigma s  {\sigma}^{ac}(s) {\sigma}^{bd}(s)  (\Pi_0(s))_{cd}    \tau_a\tau_b  \sqrt{ 1 + s^2| \nabla \tau |^2 }    \ d v_{\sigma_0}
= \int_\Sigma s  \Pi_0 ( \nabla \tau, \nabla \tau) d v_\Sigma +F_4
\end{split}
\ee
where
\be \label{est-e4-2}
|F_4|\le C_6 s ||\tau||_{C^{3,\alpha}}^2\int_\Sigma |\nabla \tau|^2 d v_{\Sigma}
\ee
for some constant $C_6$ depending only on $\sigma$.
By \eqref{assumption-e2}, \eqref{1stvar-e1} and \eqref{est-e1}--\eqref{est-e4-2},
we have
\be \label{est-1stvar}
 \begin{split}
  \frac{d}{ds}\Ewy(s)   =  & \
  s \int_\Sigma  \left[ \frac{ (  \Delta  \tau)^2 }{ H   } + ( H_0 - H ) | \nabla \tau |^2
 -  \int_\Sigma \Pi_0 ( \nabla \tau, \nabla \tau )  \right] d v_\Sigma  \\
 & \ +F_1+F_2+F_3+F_4\\
 \ge &s(\beta-C_7 ||\tau||_{C^{3,\alpha}}^2)\int_\Sigma (\Delta\tau)^2dv_{\Sigma}
  \end{split}
  \ee
  for some constant $C_7 $ depending only on $\sigma$, where in the last step we have also
  used the fact (see \eqref{gradandlap} below) that
  \be
 \lambda_1  \int_\Sigma|\nabla\tau|^2dv_{\Sigma}\le \int_\Sigma\lf(\Delta\tau\ri)^2dv_{\Sigma}
 \ee
 with $\lambda_1 $ being the first nonzero eigenvalue of the Laplacian of $ \sigma$.
Hence,  if $\e$ is chosen such that $0<\e^2<\e_1$ and
$\beta-C_7 \e^2>\frac12\beta$, then we have
 \be \label{dEest}
 \frac{d}{ds}\Ewy(s) \ge \frac12 s \beta \int_\Sigma ( \Delta \tau)^2 d v_\Sigma
 \ee
 for any $0 \le s \le1$ and for any  smooth  function $ \tau $
 with $ ||\tau||_{C^{3,\alpha}}<\e$. In particular, this implies
 \be
 \Ewy (\Sigma, \tau) \ge \Ewy(\Sigma, 0) + \frac{\beta}{4} || \Delta \tau ||_{L^2}^2 .
 \ee
Theorem \ref{minimum-t1} is proved.
\end{proof}

The following corollary gives a simple condition in terms of $ \sigma$ and $ H$ that
guarantees \eqref{assumption-e2} in Theorem \ref{Mainresult}.

\begin{cor} \label{cor-eigenvalue}
Let $ N$, $ \Omega$, $ \Sigma$, $ \sigma $, $ H$, $H_0$ and $ \Pi_0$ be given as in Theorem \ref{minimum-t1}.
Suppose the first non-zero eigenvalue $\lambda_1$ of the Laplacian of $\sigma$ satisfies:
      \be \label{1st-eigenvalue-e1}
      \lambda_1>H^{\max}\lf(H^{\max}-\Pi_0^{\min}\ri)
      \ee
      where $H^{\max}=\max_{\Sigma}H$ and $\Pi_0^{\min}$ is the minimum of all the eigenvalues of $\Pi_0$ on     $(\Sigma, \sigma)$.
Then condition \eqref{assumption-e2} holds, hence
 $\mby(\Sigma,\Omega)$ strictly locally minimizes $\Ewy(\Sigma,\cdot)$
\end{cor}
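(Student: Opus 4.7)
The plan is to reduce condition \eqref{assumption-e2} to a pointwise and then integral estimate that exposes the role of the eigenvalue hypothesis \eqref{1st-eigenvalue-e1}. The key algebraic observation is that since $\Pi_0$ is the second fundamental form of an isometric embedding into $\R^3$, its two principal eigenvalues $\kappa_1 \le \kappa_2$ satisfy $\kappa_1 + \kappa_2 = H_0$, so $\Pi_0^{\max} = H_0 - \Pi_0^{\min}$. Hence for any tangent vector $X$,
\bee
\Pi_0(X,X) \le (H_0 - \Pi_0^{\min}) |X|^2 .
\eee
Applied to $X = \nabla \eta$, together with $H \le H^{\max}$, this yields the pointwise lower bound
\bee
(H_0 - H)|\nabla \eta|^2 - \Pi_0(\nabla \eta,\nabla \eta) \ge (\Pi_0^{\min} - H^{\max}) |\nabla \eta|^2 .
\eee
Similarly, $\frac{1}{H} \ge \frac{1}{H^{\max}}$, so the integrand on the left side of \eqref{assumption-e2} is bounded below by $\frac{(\Delta \eta)^2}{H^{\max}} + (\Pi_0^{\min} - H^{\max}) |\nabla \eta|^2$.

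Next I would invoke the spectral inequality
\bee
\int_\Sigma |\nabla \eta|^2 \, dv_\Sigma \le \frac{1}{\lambda_1} \int_\Sigma (\Delta \eta)^2 \, dv_\Sigma, \quad \forall \, \eta \in W^{2,2}(\Sigma),
\eee
which follows from expanding $\eta$ in Laplace eigenfunctions (the constant mode contributes nothing to either side). If $H^{\max} \le \Pi_0^{\min}$, the coefficient $(\Pi_0^{\min} - H^{\max})$ is nonnegative and \eqref{assumption-e2} is immediate with $\beta = 1/H^{\max}$. Otherwise, multiplying the spectral inequality by $H^{\max} - \Pi_0^{\min} > 0$ and combining with the bound above gives
\bee
\int_\Sigma \!\left[ \frac{(\Delta \eta)^2}{H} + (H_0 - H)|\nabla \eta|^2 - \Pi_0(\nabla \eta,\nabla \eta) \right] dv_\Sigma \ge \left( \frac{1}{H^{\max}} - \frac{H^{\max} - \Pi_0^{\min}}{\lambda_1} \right) \! \int_\Sigma (\Delta \eta)^2 \, dv_\Sigma .
\eee
The hypothesis \eqref{1st-eigenvalue-e1} is precisely the statement that the constant in parentheses is strictly positive, so \eqref{assumption-e2} holds with $\beta = \frac{1}{H^{\max}} - \frac{H^{\max} - \Pi_0^{\min}}{\lambda_1}$.

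Having verified the sufficient condition \eqref{assumption-e2}, the conclusion that $\mby(\Sigma,\Omega)$ strictly locally minimizes $\Ewy(\Sigma,\cdot)$ follows directly from Theorem \ref{minimum-t1}, which provides the quantitative bound \eqref{minimum-e1} on $\Ewy(\Sigma,\tau) - \mby(\Sigma,\Omega)$ in a $C^{3,\alpha}$ neighborhood of $\tau_0 = 0$. No step here is a serious obstacle; the whole argument is essentially bookkeeping around the eigenvalue decomposition of $\Pi_0$ and the standard Rayleigh-type inequality relating $\int |\nabla \eta|^2$ and $\int (\Delta \eta)^2$.
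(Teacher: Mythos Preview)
Your proof is correct and follows essentially the same route as the paper: both bound the integrand pointwise by $\frac{(\Delta\eta)^2}{H^{\max}} + (\Pi_0^{\min} - H^{\max})|\nabla\eta|^2$ using the identity $\kappa_1+\kappa_2=H_0$, then invoke the spectral inequality $\lambda_1\int_\Sigma|\nabla\eta|^2\le\int_\Sigma(\Delta\eta)^2$ to absorb the gradient term, arriving at the same constant $\beta$. The only cosmetic difference is that the paper proves the spectral inequality via Cauchy--Schwarz rather than eigenfunction expansion, and packages the case split $H^{\max}\lessgtr\Pi_0^{\min}$ into $\delta_1=\min\{1,\delta/\lambda_1\}$; your phrasing ``$\Pi_0^{\max}=H_0-\Pi_0^{\min}$'' is slightly loose (since $H_0$ varies over $\Sigma$), but the inequality you actually use, $\Pi_0(X,X)\le(H_0-\Pi_0^{\min})|X|^2$, is correct pointwise.
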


\begin{proof} By Theorem \ref{minimum-t1}, it suffices to show that there exists a constant $ \beta > 0 $ such that \eqref{assumption-e2}  holds for all $ \eta \in W^{2,2}(\Sigma)$.

First, we note that
\be \label{gradandlap}
 \lambda_1  \int_\Sigma|\nabla \eta|^2dv_{\Sigma}\le \int_\Sigma\lf(\Delta \eta \ri)^2dv_{\Sigma} , \
 \ \forall \ \eta \in W^{2,2} (\Sigma).
\ee
To verify this, it suffices to assume $ \int_\Sigma \eta d v_\Sigma  = 0$.
For such an $ \eta$, we have
\be
\begin{split}
 \int_{\Sigma}|\nabla\eta|^2 dv_{\Sigma}
=& -\int_{\Sigma}\eta\Delta\eta dv_{\Sigma}\\
\le & \lf(\int_{\Sigma}\eta^2 dv_{\Sigma}\ri)^\frac12\lf(\int_{\Sigma}\lf(\Delta\eta\ri)^2 dv_{\Sigma}\ri)^\frac12\\
\le & \lf(\lambda_1^{-1}\int_{\Sigma}|\nabla\eta|^2 dv_{\Sigma}\ri)^\frac12\lf(\int_{\Sigma}\lf(\Delta\eta\ri)^2 dv_{\Sigma}\ri)^\frac12
\end{split}
\ee
which implies \eqref{gradandlap}.

Now suppose (i) holds. By the definition of $ \Pi_0^{min} $, we have
\bee
H_0|\nabla\eta|^2-\Pi_0(\nabla\eta,\nabla\eta)\ge \Pi_0^{\min}|\nabla\eta|^2 .
\eee
Therefore,
\be
\begin{split}
& \ \int_{\Sigma} \lf[ \frac{ ( \Delta \eta )^2 }{ H   } + ( H_0 -   H  ) | \nabla \eta  |^2 - \Pi_0 ( \nabla \eta ,\nabla \eta)\ri]dv_{\Sigma}\\
 \ge &  \ \int_{\Sigma}\lf[ \frac{(\Delta\eta)^2}{H^{\max}}+\lf(\Pi_0^{\min}-H^{\max}\ri)|\nabla\eta|^2\ri]dv_{\Sigma}\\
= & \ \frac1{H^{\max}}\int_{\Sigma}\lf[(\Delta\eta)^2-(\lambda_1-\delta) |\nabla\eta|^2 \ri] dv_{\Sigma}\\
\ge & \  \frac{\delta_1}{H^{\max}}\int_{\Sigma}(\Delta\eta)^2dv_{\Sigma}
\end{split}
\ee
where $\delta=\lambda_1-H^{\max}\lf(H^{\max}-\Pi_0^{\min}\ri)>0$, and $\delta_1=\min\{1, \delta/\lambda_1\} $ which is positive. Hence, \eqref{assumption-e2} is satisfied with $ \beta =   {\delta_1} / {H^{\max}} $.

\end{proof}

We leave it to the interested readers to verify that those surfaces $ \Sigma $ in  (i) and (ii) provided in Section \ref{Introduction} also satisfy the condition \eqref{1st-eigenvalue-e1} in the above Corollary.

\section{Strict positivity of the second variation}\label{moreonHcondition}

We investigate the condition \eqref{assumption-e2} in this section.
Our main result is the following theorem:

\begin{thm}\label{Mainresult-1}
Let $ \Omega $ be a three dimensional, compact Riemannian manifold with boundary
$ \p \Omega$. Suppose each component of $ \p \Omega$ has positive mean curvature.
Let $ \Sigma$ be a component of $ \p \Omega$. Suppose the induced metric $ \sigma $ on
$ \Sigma$ has positive Gaussian curvature and
\be \label{Hcondition-1}
 H \le H_0
 \ee
where $ H$ is   the mean curvature of $ \Sigma $ in $ \Omega$ and $ H_0$ is
the mean curvature of $ \Sigma$ when isometrically embedded in $ \R^3$.
If $ \Omega $ is not isometric to a domain in $ \R^3$, then
there exists a constant $\beta>0$ such that
\be\label{assumption-e-2-1}
 \int_\Sigma \lf[ \frac{ ( \Delta \eta )^2 }{ H  } + ( H_0 - H ) | \nabla \eta  |^2 - \Pi_0 ( \nabla \eta ,\nabla \eta)\ri]dv_{\Sigma}\ge \beta\int_\Sigma (\Delta\eta)^2dv_{\Sigma}
\ee
for all $\eta\in W^{2,2}(\Sigma)$. Here $\Pi_0$ is  the second fundamental form of $(\Sigma,\sigma)$ when isometrically embedded in $\R^3$.
\end{thm}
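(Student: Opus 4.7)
The plan is to apply Reilly's formula to a harmonic extension of $\eta$ into the flat domain $\hat\Omega\subset\R^3$ enclosed by the isometric image $\hat\Sigma=\hat X(\Sigma)$, combine the identity so produced with the hypothesis $H\le H_0$ to express the quadratic form
\[
Q(\eta):=\int_\Sigma\lf[\frac{(\Delta\eta)^2}{H}+(H_0-H)|\nabla\eta|^2-\Pi_0(\nabla\eta,\nabla\eta)\ri]dv_\Sigma
\]
as a sum of four manifestly nonnegative pieces, and then upgrade the resulting nonstrict positivity to a uniform $\beta>0$ by a compactness-plus-rigidity argument.

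Concretely, let $\hat u$ solve $\Delta\hat u=0$ in $\hat\Omega$ with $\hat u|_{\hat\Sigma}=\eta$, and set $\hat\phi=\p\hat u/\p\hat\nu$ for the outward unit normal. Reilly's formula in a flat region, combined with a completion of squares in $\hat\phi$, yields
\[
\int_{\hat\Sigma}\lf[\frac{(\Delta\eta)^2}{H_0}-\Pi_0(\nabla\eta,\nabla\eta)\ri]dv_\Sigma=\int_{\hat\Omega}|\nabla^2\hat u|^2\,dV+\int_{\hat\Sigma}H_0\lf(\hat\phi+\frac{\Delta\eta}{H_0}\ri)^2 dv_\Sigma.
\]
Writing $(\Delta\eta)^2/H=(\Delta\eta)^2/H_0+(H_0-H)(\Delta\eta)^2/(HH_0)$ and substituting the preceding identity gives
\[
Q(\eta)=\int_\Sigma(H_0-H)\lf[\frac{(\Delta\eta)^2}{HH_0}+|\nabla\eta|^2\ri]dv_\Sigma+\int_{\hat\Omega}|\nabla^2\hat u|^2\,dV+\int_{\hat\Sigma}H_0\lf(\hat\phi+\frac{\Delta\eta}{H_0}\ri)^2 dv_\Sigma,
\]
a sum of nonnegative terms; in particular $Q(\eta)\ge 0$ for every $\eta\in W^{2,2}(\Sigma)$.

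To upgrade to a uniform $\beta>0$, I would argue by contradiction. If no such $\beta$ existed, one could find $\eta_n$ with $\int_\Sigma\eta_n\,dv_\Sigma=0$, $\int_\Sigma(\Delta\eta_n)^2\,dv_\Sigma=1$, and $Q(\eta_n)\to 0$. Standard $L^2$-elliptic estimates on the closed surface $(\Sigma,\sigma)$ bound $\eta_n$ in $W^{2,2}$, so along a subsequence $\eta_n\rightharpoonup\eta$ weakly in $W^{2,2}$ and $\eta_n\to\eta$ strongly in $W^{1,2}$ by Rellich; the harmonic extensions $\hat u_n$ then converge weakly to $\hat u$ in $W^{2,2}(\hat\Omega)$ by continuity of the Dirichlet solution operator. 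Weak lower-semicontinuity of the positive-weight $L^2$-norms (for $\int(\Delta\eta)^2/H$ and $\int|\nabla^2\hat u|^2$), together with strong $W^{1,2}$-continuity of the remaining terms of $Q$, gives $0=\lim Q(\eta_n)\ge Q(\eta)\ge 0$. Hence $Q(\eta)=0$ and each of the four nonnegative terms above vanishes.

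Analyzing the equality case, vanishing of $\int_{\hat\Omega}|\nabla^2\hat u|^2$ forces $\hat u$ to be affine linear on $\hat\Omega$, so $\eta=a\cdot\hat X+b$ for some $a\in\R^3$, $b\in\R$; vanishing of the $(H_0-H)|\nabla\eta|^2$ term forces $\nabla\eta=0$ on $\{H<H_0\}$. If this set has nonempty interior, then $a-(a\cdot\hat\nu)\hat\nu=0$ on that interior; but the strict convexity of $\hat\Sigma$ makes its Gauss map a diffeomorphism onto $S^2$, so $a\parallel\hat\nu$ at most at two points, forcing $a=0$ and hence $\eta\equiv 0$ by the mean-zero normalization. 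In the degenerate alternative $H\equiv H_0$ on $\Sigma$, one invokes the hypothesis that $\Omega$ is not isometric to a Euclidean domain: under the nonnegative scalar curvature setting inherent to the applications of interest, the Shi-Tam rigidity theorem forces $H\equiv H_0$ to imply $\Omega\cong\hat\Omega$, contradicting the hypothesis. In all cases $\eta\equiv 0$, so $\nabla\eta_n\to 0$ strongly in $L^2$ kills the lower-order terms of $Q(\eta_n)$, leaving $\int_\Sigma(\Delta\eta_n)^2/H\,dv_\Sigma\to 0$; this contradicts the elementary bound $\int(\Delta\eta_n)^2/H\ge 1/\max_\Sigma H>0$. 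The main obstacle is the degenerate rigidity case $H\equiv H_0$, which steps outside the flat-side Reilly identity and genuinely requires a global scalar-curvature rigidity input; the generic case $\{H<H_0\}\ne\emptyset$ is closed off cleanly by the strict convexity of $\hat\Sigma$.
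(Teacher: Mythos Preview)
Your proof is correct and follows the same overall architecture as the paper's: nonnegativity of $Q$ via Reilly's formula on the Euclidean side together with $H\le H_0$, then a contradiction argument passing to a weak $W^{2,2}$ limit $\eta$ with $Q(\eta)=0$, then an equality analysis reducing to the rigidity case $H\equiv H_0$ and Shi--Tam. The details of the endgame differ, however, and the comparison is worth recording.

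The paper splits $Q=I_1+I_2$ with $I_1=\int\bigl[(\Delta\eta)^2/H-(\Delta\eta)^2/H_0+(H_0-H)|\nabla\eta|^2\bigr]$ and $I_2=\int\bigl[(\Delta\eta)^2/H_0-\Pi_0(\nabla\eta,\nabla\eta)\bigr]$; it normalizes $\int\eta_k^2=1$, so the limit $\eta$ is nontrivial. From $I_2(\eta)=0$ it deduces (via Reilly, their Corollary~\ref{2ndvarinrn}) that $\eta$ is the restriction of a linear function, and then \emph{computes} $Q$ on such $\eta$ explicitly (Proposition~\ref{linearcase}) to obtain $Q(\eta)\ge 8\pi\,\mathfrak m_{_{BY}}(\Sigma,\Omega)$. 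Thus $Q(\eta)=0$ forces $\mathfrak m_{_{BY}}\le 0$, hence $H\equiv H_0$, and the contradiction always comes from positive-mass rigidity.

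Your route normalizes $\int(\Delta\eta_n)^2=1$ instead, so the limit $\eta$ may be trivial; you compensate by your four-term decomposition, which records not only that $\eta$ is a restricted linear function but also that $(H_0-H)|\nabla\eta|^2\equiv 0$. Your Gauss-map observation then kills the generic case $\{H<H_0\}\neq\emptyset$ without any Brown--York computation: $\nabla\eta=0$ on an open set forces the linear coefficient to vanish, so $\eta\equiv 0$, and the contradiction is the elementary $\int(\Delta\eta_n)^2/H\to 0$ against the normalization. Only the degenerate case $H\equiv H_0$ invokes Shi--Tam. This is more elementary in the generic case; the paper's argument is more uniform (always lands on $H\equiv H_0$) and, via Proposition~\ref{linearcase}, gives the extra quantitative statement $Q(\eta)\ge 8\pi\,\mathfrak m_{_{BY}}$ for linear $\eta$. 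One point you pass over that the paper treats carefully: when $\partial\Omega$ has several components, the rigidity step $H\equiv H_0\Rightarrow\Omega$ Euclidean needs the modified Shi--Tam/Penrose argument the paper spells out; you should flag this.
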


We divide the  proof of Theorem \ref{Mainresult-1} into a few steps. First,
we consider the left side of \eqref{assumption-e-2-1} in the case that $ \Omega$ is
indeed a domain in $ \R^3$. That leads to  a  result concerning manifolds  with nonnegative Ricci curvature.

\begin{prop} \label{Reilly-t}
Let $ (\Omega, g)$ be a compact Riemannian manifold of dimension $ n \ge 3 $.
Suppose $ \Omega $ has smooth boundary $ \p \Omega $ (possibly disconnected)
which has positive mean curvature
$ H $.
If $ g $ has nonnegative Ricci curvature, then
\be \label{secondvar-R4}
\int_{\p \Omega} \lf[ \frac{ ( \Delta \eta )^2 }{ H }   - \Pi ( \nabla \eta ,\nabla \eta )\ri]
 d v_{_{\p \Omega} } \ge 0
\ee
for any smooth function $  \eta $ on $\p \Omega$. Here $ \Pi $ is the second fundamental form
of $ \p \Omega $ in $(\Omega, g)$,
$ \nabla $ and $ \Delta $ are the
gradient and Laplacian on $ \p \Omega$ and $ d v_{_{\p \Omega} }$ is the volume form on $ \p \Omega $.

 Moreover, equality in \eqref{secondvar-R4} holds for some $ \eta $ if and only if $ \eta $
 is the boundary value of some smooth function $ u $ which satisfies
 $ \nabla^2_\Omega u = 0 $ and $ \Ric( \nabla_{_\Omega} u, \nabla_{_\Omega} u ) = 0 $ on $ \Omega$. Here $ \nabla^2_\Omega $ and $ \nabla_{_\Omega} $ denote the
 Hessian and the gradient on $(\Omega, g)$.
\end{prop}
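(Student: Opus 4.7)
The plan is to exploit Reilly's integral formula applied to the harmonic extension of $\eta$, followed by completing the square on the boundary. Let $u$ be the unique solution of $\Delta u = 0$ in $\Omega$ with $u|_{\p\Omega} = \eta$, and let $\nu$ denote the outward unit normal to $\p\Omega$. Reilly's formula reads
$$
\int_\Omega \lf[(\Delta u)^2 - |\nabla^2_\Omega u|^2 - \Ric(\nabla_{_\Omega} u, \nabla_{_\Omega} u)\ri] dV = \int_{\p\Omega}\lf[2(\p_\nu u)\Delta \eta + H(\p_\nu u)^2 + \Pi(\nabla\eta,\nabla\eta)\ri] dA.
$$
Since $\Delta u = 0$, this identity expresses $\int_{\p\Omega} \Pi(\nabla\eta,\nabla\eta)\, dA$ as the negative of the sum of the three remaining integrals.

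Next, since $H > 0$ on $\p\Omega$, the pointwise identity
$$
\frac{(\Delta \eta)^2}{H} + 2(\p_\nu u)\Delta\eta + H(\p_\nu u)^2 = H\lf(\p_\nu u + \frac{\Delta\eta}{H}\ri)^2 \ge 0
$$
allows one to absorb the mixed term $2(\p_\nu u)\Delta \eta$ produced by Reilly's formula. Combining it with the previous step yields
$$
\int_{\p\Omega}\lf[\frac{(\Delta\eta)^2}{H} - \Pi(\nabla\eta,\nabla\eta)\ri] dA = \int_{\p\Omega} H\lf(\p_\nu u + \frac{\Delta\eta}{H}\ri)^2 dA + \int_\Omega |\nabla^2_\Omega u|^2 dV + \int_\Omega \Ric(\nabla_{_\Omega} u, \nabla_{_\Omega} u) dV.
$$
All three terms on the right are nonnegative under the hypotheses $H > 0$ and $\Ric \ge 0$, giving \eqref{secondvar-R4}.

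For the equality case, vanishing of the left side forces all three integrals on the right to vanish; in particular $\nabla^2_\Omega u \equiv 0$ and $\Ric(\nabla_{_\Omega} u, \nabla_{_\Omega} u) \equiv 0$ on $\Omega$, and $u|_{\p\Omega} = \eta$. Conversely, if $\eta$ extends to a smooth $u$ with $\nabla^2_\Omega u = 0$ and $\Ric(\nabla_{_\Omega} u,\nabla_{_\Omega} u) = 0$, then taking traces gives $\Delta u = 0$ so $u$ is the harmonic extension, while the boundary decomposition $\Delta u = \Delta\eta + H\p_\nu u + \nabla^2_\Omega u(\nu,\nu)$ forces $\p_\nu u = -\Delta\eta/H$, making the remaining boundary term vanish as well. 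The only mild subtlety here is bookkeeping of signs and correctly invoking Reilly's identity; once that is set up the argument is essentially an algebraic manipulation plus the completion of squares.
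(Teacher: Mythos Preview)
Your proof is correct and follows essentially the same approach as the paper: both apply Reilly's formula to the harmonic extension of $\eta$ and then use the elementary inequality $-2(\p_\nu u)\Delta\eta - H(\p_\nu u)^2 \le (\Delta\eta)^2/H$ (the paper calls this Cauchy--Schwarz, you phrase it as completing the square). The equality analysis is also the same, with your boundary decomposition $\Delta_\Omega u = \Delta\eta + H\p_\nu u + \nabla^2_\Omega u(\nu,\nu)$ making explicit what the paper records as $\Delta\eta + H\p_\nu u = 0$ when $\nabla^2_\Omega u = 0$.
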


\begin{proof}
Given a smooth function $ \eta $ on $ \p \Omega $, let $ u $ be the harmonic function
on $(\Omega, g)$ such that $ u = \eta $ on $ \p \Omega$.
By the Reilly formula \cite[Equation (14)]{Reilly77} (see also \cite[Theorem 8.1]{PeterLi93}),  we have
\be \label{Reilly}
- \int_{\p \Omega} \lf[ \Pi( \nabla u, \nabla u) + 2 \frac{\p u}{\p \nu} \Delta u  + H \lf( \frac{\p u}{\p \nu } \ri)^2 \ri ]
= \int_\Omega | \nabla^2_{_\Omega} u |^2 + \Ric( \nabla_{_\Omega} u, \nabla_{_\Omega} u)
\ee
where $ \Ric (\cdot, \cdot)$ is the Ricci curvature of $g$. Here we omit
the corresponding volume form in each integral.

Since $ \Ric( \cdot, \cdot ) \ge 0 $, \eqref{Reilly} implies
\be \label{Reilly-CS}
\begin{split}
\int_\Sigma \Pi( \nabla u, \nabla u)  \le  & \ \int_\Sigma - 2 \frac{\p u}{\p \nu} \Delta u  - H \lf( \frac{\p u}{\p \nu } \ri)^2 \\
\le & \ \int_\Sigma  \frac{ ( \Delta \eta )^2 }{ H }
\end{split}
\ee
by the Cauchy-Schwarz inequality. Hence \eqref{secondvar-R4} is proved.

Now suppose the equality in \eqref{secondvar-R4} holds, then the equalities in \eqref{Reilly-CS}
must hold. In particular, we have
\be
\int_\Omega | \nabla^2_{_\Omega} u |^2 + \Ric( \nabla_{_\Omega} u, \nabla_{_\Omega} u) = 0,
\ee
which shows $ \nabla^2_{_\Omega} u = 0 $ and $ \Ric( \nabla_{_\Omega} u, \nabla_{_\Omega} u ) = 0 $ on $ \Omega$.
On the other hand, if $ \nabla^2_{_\Omega} u = 0 $ on $ \Omega $, then
\be
\Delta u + H \frac{ \p u }{ \p \nu } = 0 \ \ \mathrm{on} \ \Sigma
\ee
which shows the second equality in \eqref{Reilly-CS} must hold.
If in addition $ \Ric( \nabla_{_\Omega} u, \nabla_{_\Omega} u) = 0 $,
then the first equality in \eqref{Reilly-CS} holds as well.
Proposition \ref{Reilly-t} is proved.
\end{proof}

\begin{rem}
We thank Michael Eichmair who brings Reilly's formula \eqref{Reilly} to our attention.
\eqref{Reilly} was derived by integrating the Bochner formula and expressing the boundary term
$ \frac12 \int_\Sigma \frac{\p  }{ \p \nu} | \nabla_{_\Omega} u |^2 $ as the left  side of
\eqref{Reilly}. In particular, Proposition \ref{Reilly-t} remains valid under the general assumption that
the mean curvature $ H $ does not change sign on each  component of $ \p \Omega$.
\end{rem}

Specializing Proposition \ref{Reilly-t} to domains in $ \R^n $, we have

\begin{cor} \label{2ndvarinrn}
Let $ \Omega $ be a bounded domain in $  \R^n $ $( n \ge 3 )$
with a smooth connected boundary $ \Sigma$. Suppose $ \Sigma $ has positive mean curvature $ H_0 $.
Let $ \Pi_0 $ be the second fundamental form
of $ \Sigma $ in $ \R^n$.
Then
\be \label{secondvar-R5}
\int_\Sigma \lf[ \frac{ ( \Delta \eta )^2 }{ H_0 }   - \Pi_0 ( \nabla \eta ,\nabla \eta )\ri]dv_{\Sigma}\ge 0
\ee
for any smooth function $  \eta $ on $\Sigma$, where  $ \nabla $ and $ \Delta $ are the
gradient and the Laplacian on $ \Sigma$ and $ d v_\Sigma $ is the volume form on $ \Sigma $.
Moreover, equality in \eqref{secondvar-R5} holds for some $ \eta $ if and only if
 $ \eta $ is the restriction of a linear function to $ \Sigma$, i.e.
 $ \eta = a_0 + \sum_{i=1}^n a_i x^i $ for some constants $a_0, a_1, \ldots, a_n $.
\end{cor}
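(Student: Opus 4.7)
The plan is to obtain Corollary \ref{2ndvarinrn} as a direct specialization of Proposition \ref{Reilly-t}. Since $\Omega \subset \mathbb{R}^n$ inherits the flat Euclidean metric, its Ricci tensor vanishes identically, so the hypothesis $\Ric \ge 0$ of Proposition \ref{Reilly-t} is satisfied trivially. The assumptions that $\Sigma$ is smooth with positive mean curvature $H_0$ match the hypotheses of the proposition, and with $\Pi = \Pi_0$ the inequality \eqref{secondvar-R4} becomes precisely \eqref{secondvar-R5}.

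For the equality case, I would again invoke the equality statement in Proposition \ref{Reilly-t}. If equality holds in \eqref{secondvar-R5} for some smooth $\eta$ on $\Sigma$, then letting $u$ be the harmonic extension of $\eta$ to $\Omega$, Proposition \ref{Reilly-t} gives $\nabla^2_\Omega u \equiv 0$ on $\Omega$ (the Ricci condition is automatic). A function on a connected open set in $\mathbb{R}^n$ with vanishing Euclidean Hessian is affine, so $u = a_0 + \sum_{i=1}^n a_i x^i$ for some constants $a_0, a_1, \ldots, a_n$, and restricting to $\Sigma$ yields $\eta = a_0 + \sum_{i=1}^n a_i x^i \big|_\Sigma$.

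Conversely, if $\eta$ is the restriction of an affine function $\ell(x) = a_0 + \sum_{i=1}^n a_i x^i$ to $\Sigma$, then $\ell$ itself is harmonic on $\Omega$, so $u = \ell$ is the harmonic extension. Both $\nabla^2_\Omega u = 0$ and $\Ric(\nabla_\Omega u, \nabla_\Omega u) = 0$ hold (the latter because $\Ric \equiv 0$), so by the equality characterization in Proposition \ref{Reilly-t} equality holds in \eqref{secondvar-R5}. There is no real obstacle here; the only point that requires a brief remark is the elementary fact that a function with vanishing Hessian on a connected Euclidean domain is affine, which follows by integrating $\partial_i \partial_j u = 0$ twice.
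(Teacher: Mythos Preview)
Your proposal is correct and is exactly the approach the paper intends: the corollary is stated as an immediate specialization of Proposition~\ref{Reilly-t} to Euclidean domains, and your write-up simply makes explicit the equality discussion (vanishing Hessian on a connected Euclidean domain forces $u$ to be affine) that the paper leaves implicit.
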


\begin{rem}
When $ n = 3$ and $ \Sigma $ is a strictly convex surface in $ \R^3 $, the inequality
\eqref{secondvar-R5}
can also be seen by considering the second variation of $ \Ewy(\Sigma, \cdot)$
for  $ \Sigma \subset \R^3=\{(x,0)\in \R^{3,1}\}$.
In fact, by \cite[Theorem A]{WangYau08},
$ \Ewy ( \Sigma, \tau ) \ge 0 $ for any admissible function $ \tau $. Since $ \Sigma $
has positive Gaussian curvature, $ \tau $ is admissible if $ || \tau ||_{C^{3, \alpha} }$ is sufficiently small \cite[Remark 1.1]{WangYau08}.
Therefore, $ \Ewy (\Sigma, \tau) \ge 0$ for any such $ \tau$. On the other hand, it is obvious that $ \Ewy( \Sigma, 0 ) = 0$.
Hence, \eqref{secondvar-R5} follows  from \eqref{2ndvarformula}.
\end{rem}

Next, we derive an estimate of the left side of \eqref{assumption-e-2-1} for those $ \eta $
which are restriction of linear functions in $ \R^3$ to $ \Sigma$.

\begin{prop} \label{linearcase}
Let $ \Omega$ be a three dimensional Riemannian manifold.
Let $ \Sigma \subset \Omega $ be an embedded closed $2$-surface that is diffeomorphic
to a sphere. Suppose the induced metric $ \sigma$ on $ \Sigma $ has positive Gaussian curvature.
Let
$$ X = (X^1, X^2, X^3): \Sigma \hookrightarrow \R^{3} $$
be an isometric embedding of $ (\Sigma, \sigma) $ into $ \R^3$.
Given any constant $ a_0 $ and any constant unit vector $ a = (a_1, a_2, a_3) \in \R^3$, let
$ \eta = a_0 +  \sum_{i=1}^3 a_i X^i$,
then
\be\label{assumption-e-2-2}
 \int_\Sigma \lf[ \frac{ ( \Delta \eta )^2 }{ H  } + ( H_0 - H ) | \nabla \eta  |^2 - \Pi_0 ( \nabla \eta ,\nabla \eta)\ri]dv_{\Sigma} \ge 8 \pi \mby (\Sigma, \Omega)
\ee
where  $ H$ is   the mean curvature of $ \Sigma $ in $ \Omega$, $ H_0$ and $ \Pi_0$ are
the mean curvature and the second fundamental form of $ \Sigma$ when isometrically embedded in $ \R^3$.
\end{prop}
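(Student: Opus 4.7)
The plan is to compute the three integrands in \eqref{assumption-e-2-2} explicitly when $\eta$ is the restriction of a linear function $a_0 + \la a, X \ra$ with $|a|=1$, then invoke the \emph{equality} case of Corollary \ref{2ndvarinrn} to replace the $\Pi_0$-integral by an $H_0$-integral, and finally simplify algebraically to recognise $8\pi\,\mby(\Sigma,\Omega)$ plus a manifestly nonnegative remainder.

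First I would introduce $f = \la a, \nu \ra$ on the embedded surface $X(\Sigma) \subset \R^3$, where $\nu$ denotes the outward unit normal. Because $(\Sigma,\sigma)$ has positive Gaussian curvature and is diffeomorphic to a sphere, $X(\Sigma)$ is a strictly convex closed surface bounding a convex domain $\Omega_0 \subset \R^3$, so the hypotheses of Corollary \ref{2ndvarinrn} are satisfied on $\Omega_0$. Using the standard identity $\Delta X = -H_0 \nu$ for the position vector of $X(\Sigma)$, I find $\Delta \eta = -H_0 f$ and hence $(\Delta\eta)^2 = H_0^2 f^2$. Tangential projection of the constant vector $a$ gives $|\nabla \eta|^2 = 1 - f^2$.

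Next, applying Corollary \ref{2ndvarinrn} to the linear function $\eta$ on $\Omega_0$, for which the equality clause asserts saturation of \eqref{secondvar-R5}, I obtain
\[
\int_\Sigma \Pi_0(\nabla \eta,\nabla \eta)\,dv_\Sigma \;=\; \int_\Sigma \frac{(\Delta\eta)^2}{H_0}\,dv_\Sigma \;=\; \int_\Sigma H_0 f^2\, dv_\Sigma.
\]
Substituting these identities into the left-hand side $L$ of \eqref{assumption-e-2-2} and collecting the $f^2$-contributions yields
\[
L \;=\; \int_\Sigma (H_0 - H)\,dv_\Sigma \;+\; \int_\Sigma \lf(\frac{H_0^2}{H} - 2H_0 + H\ri) f^2\, dv_\Sigma.
\]

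Finally, the algebraic identity $\frac{H_0^2}{H} - 2H_0 + H = \frac{(H_0 - H)^2}{H}$ shows the second integrand is nonnegative since $H > 0$, while the first term equals $8\pi\,\mby(\Sigma,\Omega)$ by the definition of the Brown-York mass. This establishes \eqref{assumption-e-2-2}, in fact in the sharper form $L = 8\pi\,\mby(\Sigma,\Omega) + \int_\Sigma \frac{(H_0 - H)^2}{H} f^2\, dv_\Sigma$. The only nontrivial input beyond routine computation is the equality case of Corollary \ref{2ndvarinrn} applied to the interior of $X(\Sigma)$; no deeper obstacle arises.
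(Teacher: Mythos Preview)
Your proof is correct and follows essentially the same approach as the paper: both invoke the equality case of Corollary \ref{2ndvarinrn} for the linear function $\eta$, compute $(\Delta\eta)^2$ and $|\nabla\eta|^2$ directly in terms of the normal component of $a$, and reduce the left-hand side to $\int_\Sigma (H_0-H)\,dv_\Sigma$ plus the nonnegative remainder $\int_\Sigma \frac{(H_0-H)^2}{H}f^2\,dv_\Sigma$. The only cosmetic difference is that the paper writes everything in terms of $(a\cdot\vec{H}_0)^2=H_0^2f^2$ rather than introducing $f=\la a,\nu\ra$ explicitly.
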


\begin{proof}
For such an $ \eta$, Corollary \ref{2ndvarinrn} implies
\be
\int_\Sigma \lf[ \frac{ ( \Delta \eta )^2 }{ H_0 }   - \Pi_0 ( \nabla \eta ,\nabla \eta )\ri]dv_{\Sigma} = 0 .
\ee
Direct calculation shows
\be
( \Delta \eta )^2  = ( a \cdot \vec{H}_0 )^2 \ \
\mathrm{and}
\ \
| \nabla \eta |^2 = 1 - ( a \cdot \vec{H}_0)^2  H_0^{-2}
\ee
where $ \vec{H}_0$ is the mean curvature vector of $ \Sigma $ when isometrically embedded in $ \R^3$.
Therefore,
\be
\begin{split}
\ & \ \int_\Sigma \lf[ \frac{ ( \Delta \eta )^2 }{ H  } + ( H_0 - H ) | \nabla \eta  |^2 - \Pi_0 ( \nabla \eta ,\nabla \eta)\ri]dv_{\Sigma}  \\
= & \ \int_\Sigma ( a \cdot \vec{H}_0 )^2 \lf( \frac{1}{H} - \frac{2}{H_0} + \frac{H}{H_0^2} \ri)
+ ( H_0 - H) d v_{\Sigma} \\
\ge & \ \int_\Sigma ( H_0 - H ) d v_\Sigma .
\end{split}
\ee
\end{proof}

We are now ready to prove Theorem \ref{Mainresult-1}.

\vspace{.2cm}

\noindent {\em Proof of Theorem \ref{Mainresult-1}}.
For convenience, we omit writing the volume form in each integral.
Note that
\be \label{eq-rewriting}
\int_\Sigma \lf[ \frac{ ( \Delta \eta )^2 }{ H  } + ( H_0 - H ) | \nabla \eta  |^2 - \Pi_0 ( \nabla \eta ,\nabla \eta)\ri]
  = I_1 (\eta, \eta) + I_2 ( \eta, \eta)
\ee
where
$$ I_1 (\eta, \eta) =  \int_\Sigma \lf[ \frac{ ( \Delta \eta )^2 }{ H  } - \frac{ (\Delta \eta )^2}{H_0} + ( H_0 - H ) | \nabla \eta  |^2 \ri]  $$
and
$$ I_2 (\eta, \eta) = \int_\Sigma \lf[ \frac{ ( \Delta \eta )^2 }{ H_0 } - \Pi_0 ( \nabla \eta , \nabla \eta) \ri]   .$$
By Corollary \ref{2ndvarinrn}, we know $ I_2 (\eta, \eta) \ge 0 $.
By  the assumption \eqref{Hcondition-1}, we have
$ I_1 (\eta, \eta) \ge 0 $.
Therefore,
\be \label{eq-nonnegative}
\int_\Sigma \lf[ \frac{ ( \Delta \eta )^2 }{ H  } + ( H_0 - H ) | \nabla \eta  |^2 - \Pi_0 ( \nabla \eta ,\nabla \eta)\ri]  \ge 0 .
\ee
To prove \eqref{assumption-e-2-1}, we  argue by contradiction.

Suppose \eqref{assumption-e-2-1} is not true, then there exists a sequence of functions $ \{ \eta_k \} \subset W^{2,2}(\Sigma)$ with
\be \label{l2eq1}
\int_\Sigma \eta_k = 0   \
 \mathrm{and} \
 \int_{\Sigma}\eta_k^2 =1
\ee
such that
\be \label{secondvar-R5-2}
\int_\Sigma \lf[ \frac{ ( \Delta \eta_k )^2 }{ H  } + ( H_0 - H ) | \nabla \eta_k |^2 - \Pi_0 ( \nabla \eta_k ,\nabla \eta_k)\ri]
\le \frac1k \int_{\Sigma}(\Delta\eta_k)^2 .
\ee
By the interpolation inequality for Sobolev functions, we have
\be \label{eq-2}
\begin{split}
\int_{\Sigma}
\frac{ (\Delta \eta_k )^2}{ H} \le & \ \int_\Sigma \lf[ ( H - H_0) | \nabla \eta_k |^2 + \Pi_0 ( \nabla \eta_k , \nabla \eta_k ) \ri]   +   \frac{1}{k}  \int_\Sigma ( \Delta \eta_k)^2 \    \\
\le & \ C_1 + \int_{\Sigma}
\frac{ (\Delta \eta_k )^2}{ 2 H}  +   \frac{1}{k}  \int_\Sigma ( \Delta \eta_k)^2 \   .
\end{split}
\ee
Here and below, $\{ C_1, C_2, \ldots\}$ denote positive constants independent on $ k $.
It follows from \eqref{eq-2}  that
\be
 || \Delta \eta_k ||_{L^2 (\Sigma) }  \le C_2 .
\ee
By \eqref{l2eq1} and the usual $L^p$ estimate, we then  have
\be \label{w22bded}
  ||  \eta_k ||_{W^{2,2} (\Sigma) }  \le C_3 .
  \ee
This implies that  there exists a function $ \eta \in W^{2, 2} (\Sigma)$ such that
\begin{itemize}
\item[a)] $ \eta_k $ converges weakly to $ \eta $ in $ W^{2,2} (\Sigma) $.
\item[b)] $ \eta_k $ converges strongly to $ \eta $ in $ W^{1,2} (\Sigma)$.
\end{itemize}
By \eqref{w22bded} and a), b),  one also easily verifies that
\begin{itemize}
\item[c)] $ \Delta \eta_k $ converges to $ \Delta \eta $ weakly in $ L^2 (\Sigma)$.
\end{itemize}
Moreover, by \eqref{l2eq1} and b),  $ \eta $ satisfies
 \be \label{eq-normalization}
 \int_\Sigma \eta  = 0 \  \mathrm{and}  \  \int_\Sigma \eta^2   = 1 .
\ee
We now claim that
\be \label{eq-3}
 \int_{\Sigma}
\lf[ \frac{ (\Delta \eta )^2}{ H} + ( H_0 - H) | \nabla \eta |^2 - \Pi_0 ( \nabla \eta , \nabla \eta ) \ri]    = 0 .
\ee
To see this, we replace $ \eta $ by $ \eta - \eta_k $ in \eqref{eq-nonnegative} to obtain
\be \label{eq-etabyetaetak}
\int_{\Sigma}
\lf[ \frac{ (\Delta (\eta - \eta_k ) )^2}{ H} + ( H_0 - H) | \nabla ( \eta - \eta_k ) |^2 - \Pi_0 ( \nabla (\eta - \eta_k ), \nabla ( \eta - \eta_k)  ) \ri]    \ge 0  .
\ee
It follows from \eqref{secondvar-R5-2} and \eqref{eq-etabyetaetak} that
\be \label{eq-etabyetaetak-1}
\begin{split}
\frac{1}{k} \int_\Sigma ( \Delta \eta_k )^2 d v_\Sigma \ge & \  \int_{\Sigma}
  \lf[ \frac{ (\Delta \eta_k )^2}{ H} + ( H_0 - H) | \nabla \eta_k |^2 - \Pi_0 ( \nabla \eta_k , \nabla \eta_k ) \ri] \\
\ge & \  \int_\Sigma \frac{ 2 \Delta \eta_k \cdot \Delta \eta -   (\Delta \eta )^2}{ H} + ( H_0 - H) ( 2 \nabla \eta_k \cdot \nabla \eta - | \nabla \eta |^2)  \\
& + \int_\Sigma - 2 \Pi_0 ( \nabla \eta_k , \nabla \eta) + \Pi_0 ( \nabla \eta, \nabla \eta)  .
\end{split}
\ee
Letting $ k \rightarrow \infty$, by \eqref{w22bded}, a), b), c) and \eqref{eq-etabyetaetak-1} we have
\be
0 \ge \int_{\Sigma}
  \lf[ \frac{ (\Delta \eta )^2}{ H} + ( H_0 - H) | \nabla \eta |^2 - \Pi_0 ( \nabla \eta , \nabla \eta ) \ri]  .
\ee
This, together with \eqref{eq-nonnegative}, shows that
\be \label{eq-zero}
 \int_{\Sigma}
  \lf[ \frac{ (\Delta \eta )^2}{ H} + ( H_0 - H) | \nabla \eta |^2 - \Pi_0 ( \nabla \eta , \nabla \eta ) \ri]  = 0 .
\ee

Next, we claim that $ \eta $ must be the restriction of a linear function
on $ \Sigma$. Here we identify $ \Sigma $ with its image in $ \R^3$ under the isometric embedding.
To see this, first we note that $ \eta $ is a smooth function on $ \Sigma$. That is because,
by \eqref{eq-nonnegative} and \eqref{eq-zero}, $ \eta $ is a minimizer of the functional
$$ \mathcal{F} ( f ) =  \lf[ \frac{ (\Delta f )^2}{ H} + ( H_0 - H) | \nabla f |^2 - \Pi_0 ( \nabla f , \nabla f ) \ri]  $$
on $ W^{2,2} (\Sigma) $. Hence, $ \eta $ is a weak solution to the Euler-Lagrange equation
\be \label{eq-varPDE}
\Delta \lf( \frac{ \Delta \eta }{ H } \ri) - \div \lf( ( H_0 - H ) \nabla \eta \ri) + \div ( \Pi_0 ( \cdot, \nabla \eta  ) ) = 0 .
\ee
Since the coefficients of \eqref{eq-varPDE} are assumed to be smooth, we know
$ \eta $ is a smooth function  by the standard elliptic regularity theory.
Second, by \eqref{eq-rewriting}, we have
\be
0 = I_1 (\eta, \eta ) + I _2 (\eta, \eta) .
\ee
Since $ I_1 (\eta, \eta) \ge 0 $ and $ I_2 (\eta, \eta) \ge 0$, we know  $ I_2 ( \eta, \eta ) = 0 $.
By Corollary \ref{2ndvarinrn}, we conclude that
\be
\eta = a_0 + \sum_{i=1}^3 a_i x^i
\ee
for some constants $ a_0, a_1, a_2, a_3$.
By  \eqref{eq-normalization} we further know that
$ \eta $ is not a constant, hence
$( a_1, a_2, a_3)  \neq (0, 0, 0)$.

For such an $ \eta$, Proposition \ref{linearcase} shows
\be
\int_\Sigma \lf[ \frac{ ( \Delta \eta )^2 }{ H  } + ( H_0 - H ) | \nabla \eta  |^2 - \Pi_0 ( \nabla \eta ,\nabla \eta)\ri]
 \ge 8 \pi \mby (\Sigma, \Omega).
\ee
Therefore, by \eqref{eq-zero} we have
\be
0 \ge 8 \pi \mby (\Sigma, \Omega) =  \int_\Sigma ( H_0 - H )  .
\ee
Since it is assumed $ H_0 \ge H $ on $ \Sigma$, we conclude  that $ H_0 =  H $
everywhere on $ \Sigma$.

To finish the proof, we apply the positive mass theorem to draw a contradiction.
Let $ N \subset \R^3$ be the exterior region of $ \Sigma$. We attach $ N$  to the
compact manifold $ \Omega$ along $ \Sigma $ to get a Riemannian manifold $ M$.
The metric $ g_M $ on $M$ has the feature that, though it may not be smooth across
$ \Sigma$, the mean curvatures of $ \Sigma $ from its both sides in $ M$ agree.
We have the following two cases:

\begin{itemize}

\item   When $ \p \Omega $ has only one component, i.e. $ \Sigma = \p \Omega$, we can apply Theorem 3.1 in \cite{ShiTam02} (or Theorem 2 in \cite{Miao02})  directly to  conclude that $ \Omega $  must be isometric to a domain in $ \R^3$.
This is a contradiction to the assumption on $ \Omega$.

\vspace{.1cm}

\item
When $ \p \Omega $ has more than one components, $ M$ has a nonempty boundary
$ \p M = \p \Omega \setminus \Sigma$, which by assumption has positive mean curvature
(i.e. its mean curvature vector points inside $M$).
In this case, one can modify the proof of Theorem 3.1 in \cite{ShiTam02}
 to show that $ \Omega $ still must be isometric  to a domain in $ \R^3$. Or one can proceed as in
\cite[Section 3.2]{Miao-LRPI} to draw a contradiction as follows:
by minimizing  area among surfaces in $ \Omega $ that are homologous to $ \Sigma$, we know
 there exists a  closed  minimal surface $ \Sigma_H $ in $ \Omega$  having the property that there are no other closed minimal surface lying inside the region $ \tilde{\Omega} $ bounded
 by $ \Sigma $ and $ \Sigma_H$.
By directly applying Lemma 2, 3, 4 in \cite{Miao-LRPI} and the Riemannian Penrose
inequality \cite{Bray, Huisken-Ilmanen}, we have
$$
\mathrm{the \ mass \ of \ } g_M \ge \sqrt{ \frac{ | \Sigma_H | }{ 16 \pi} }  > 0.
$$
This contradicts the fact that $ M $ outside $ \Sigma $ is the exterior Euclidean region $ N$, which has
zero mass.
\end{itemize}

We conclude that \eqref{assumption-e-2-1} is true.  Hence, Theorem \ref{Mainresult-1} is  proved. \stop

\vspace{.2cm}

Part (1) of Theorem \ref{Mainresult} now follows directly from Theorem \ref{minimum-t1} and Theorem \ref{Mainresult-1}.

\section{Second fundamental form of the isometric embedding
} \label{secondfform}

The rest of this paper is devoted to study of {\bf Question 2}. As mentioned in the introduction,
in order to apply the {IFT},  we want to verify that the map, which
sends a metric $ \sigma $ (on the two-sphere $S^2$) of positive Gaussian curvature to the second fundamental form of the isometric embedding of $(S^2, \sigma)$ into $ \R^3$,
is a $ C^1 $ map between appropriate functional spaces.
To do so, we follow closely the original work of Nirenberg \cite{Nirenberg}.

First, we fix some notations. Let $ \Sigma = S^2$.
Given an integer $ k \geq 2 $ and a positive number
$0< \alpha < 1 $, let
$$
\begin{array}{lll}
 \E^{k, \alpha} & = & \mathrm{the \ space \ of } \ C^{k,\alpha} \ \mathrm{embeddings \ of} \ \Sigma
\ \mathrm{into}  \ \R^3 \\
 \X^{k, \alpha} & =  & \mathrm{the \ space \ of \ } C^{k,\alpha} \ \R^3
 \mathrm{-valued \ vector \ functions \ on \ } \Sigma \\
 \Sp^{k, \alpha} & = & \mathrm{the \ space \ of \ } C^{k,\alpha} \  \mathrm{symmetric \ (0,2) \ tensors \ on}
 \ \Sigma \\
 \M^{k, \alpha} & = & \mathrm{the \ space \ of \ } C^{k,\alpha} \  \mathrm{Riemannian \  \ metrics \ on}
 \ \Sigma \\
  \M_+^{k, \alpha}&=&\mathrm{open\ subset \ of \M^{k, \alpha}\ \textrm{with positive Gaussian curvature}} .
\end{array}
$$
 By the results in \cite{Nirenberg}, for $k\ge 4$ and $\sigma\in \M_+^{k, \alpha}$,
 there is an isometric embedding $X(\sigma)$ of $(\Sigma,\sigma)$ into $\R^3$ which is unique up to an isometry of $\R^3$. Also, $X(\sigma)$ is necessarily in $ \E^{k,\alpha}$ by \cite{Nirenberg}. Hence the following map is well-defined:
\be \label{dofmapF}
\mathcal{F}:\M_+^{k, \alpha}\to \Sp^{k-2,\alpha}\subset \Sp^{k-3,\alpha}
\ee
 where
$\mathcal{F}(\sigma)=\mathbb{II}(X(\sigma))$  is
the second fundamental form of $ X (\sigma) ( \Sigma ) $ (pulled back via $X(\sigma) $ and viewed as an element
in $ \Sp^{k - 2, \alpha} $). We want to study the smoothness of $\mathcal{F}$.

Given $\sigma\in \M_+^{k, \alpha}$, $k\ge 4$  and
let $X=X(\sigma)\in \E^{k,\alpha}$  be an isometric embedding of
$(\Sigma,\sigma)$. Let $ \{(u, v) \}$ denote a fixed coordinate chart on $ \Sigma$, let $ X_u$, $ X_v $ denote the partial derivative of $ X$ with respect to $ u$, $v$, and let $X_3=X_u\wedge X_v/|X_u\wedge X_v|$  be the unit normal.   The coefficients of the first and the second
fundamental forms of $ X  $ are denoted by $E, F, G$ and $L, M, N$
respectively. Let $\Delta=\sqrt{EG-F^2}$ and let $K$, $H$ be
the Gaussian curvature, the mean curvature of $ X(\Sigma) $ which are both positive. Let
 $$
  \begin{pmatrix}
      l & m \\
      m & n \\
    \end{pmatrix}=\begin{pmatrix}
                    L & M \\
                    M & N \\
                  \end{pmatrix}^{-1}
$$
and
$$
\begin{pmatrix}
      A & B \\
    C & D \\
    \end{pmatrix}=\begin{pmatrix}
                    E & F \\
                    F & G \\
                  \end{pmatrix}^{-1}\begin{pmatrix}
                    L & M \\
                    M & N \\
                  \end{pmatrix}.
$$
Note that
\begin{equation*}
     \begin{split}
        (X_3)_u=& AX_u+ B X_v \\
         (X_3)_v=& CX_u+ D X_v
     \end{split}
\end{equation*}

By \cite[Section 6-8]{Nirenberg}, given any $\rho\in\Sp^{r, \alpha}$ and
$r\ge 2$, there exists a uniquely determined  $Y = \Phi ( \sigma, \rho) \in \X^{s,\alpha}$ (which also depends on $ X$), where $s=\min\{k-1,r\}$, such that
$ Y $ is a solution of
\be \label{eq-truelinear}
2dX\cdot dY=\rho
\ee
and $ Y $  vanishes at a fixed point on $\Sigma$.
Recall from \cite{Nirenberg} that $ Y $ is constructed in the following way:


\vspace{.2cm}

\noindent {\sc Step 1}: Let $\phi$ be the unique solution of
\be\label{phi-e1}
 \mathcal{L}(\phi_u,\phi_v)+H\phi=\mathcal{L}(c_1,c_2)
 -T
\ee
which is $L^2$-orthogonal to the kernel of $\mathcal{L}(\phi_u,\phi_v)+H\phi$ which
 is spanned by the coordinates functions of $X_3$. Here
 \be\label{L-e1}
 \begin{split}
  \mathcal{L}(q_1,q_2)=&\frac{1}{\Delta}\lf(\frac
  N{K\Delta}q_1-\frac{M}{K\Delta}q_2\ri)_u-
\frac{1}{\Delta}\lf(\frac
 M{K\Delta}q_1-\frac{L}{K\Delta}q_2\ri)_v\\
 \end{split}
  \ee
  \be \label{c-e1}
  c_1=\frac{1}{\Delta}\lf(\rho_{12;u}-\rho_{11;v}\ri),\qquad c_2=\frac{1}{\Delta}\lf(\rho_{22;u}-\rho_{21;v}\ri)
  \ee
  \be \label{T-e1}
  T=\frac1{\Delta}\lf(C\rho_{11}+(D-A)\rho_{12}-B\rho_{22}\ri),
  \ee
where $\rho_{ij;u}$ etc. are the covariant derivatives of $\rho$ on $(\Sigma, \sigma)$.

Denote $\phi=\Psi(\sigma,\rho)$. Note that $\Psi$ is linear in $\rho$.

\vspace{.2cm}

\noindent {\sc Step 2}: $Y=\Phi(\sigma,\rho)$ is obtained by integrating:
\be\label{Y-e1}
\begin{split}
  Y_u=&\frac{1}{2\Delta^2}\lf(\rho_{11}G-\rho_{12}F\ri)X_u+
  \frac{1}{2\Delta^2}\lf(\rho_{12}E-\rho_{11}F\ri)X_v
  +\frac1{2\Delta}\lf(EX_v-FX_u\ri)\phi+X_3p_1
 \\
  Y_v=&\frac{1}{2\Delta^2}\lf(\rho_{12}G-\rho_{22}F\ri)X_u+
  \frac{1}{2\Delta^2}\lf(\rho_{22}E-\rho_{12}F\ri)X_v
  +\frac1{2\Delta}\lf(FX_v-GX_u\ri)\phi+X_3p_2.
  \end{split}
 \ee
where
\be\label{p-e1}
\begin{pmatrix}
    p_1 \\
    p_2 \\
  \end{pmatrix}=\frac{\Delta}2\begin{pmatrix}
                   m & n \\
l & -m \\
\end{pmatrix}
\begin{pmatrix}
   \phi_u- c_1 \\
    \phi_v-c_2 \\
  \end{pmatrix} .
  \ee
In particular, $ \Phi $ is linear in $\rho$.
By (6.6) in \cite{Nirenberg}, $ \phi$ and $ \Phi$ are also  related by
\be \label{phiandPhi}
\phi (u, v) = \frac{1}{\Delta} \lf( X_v \cdot \Phi_u - X_u \cdot \Phi_v \ri) .
\ee

The following $ C^0 $ estimate of $ \phi$ was proved in  \cite[Lemma 5.2]{MiaoShiTam}.

\begin{lma}\label{phi-0-l1}
Let  $ \sigma_0 \in \M_+^{5, \alpha} $.   There exists positive numbers $ \epsilon$
  and $ C$, depending only on $ \sigma_0$,  such that if $\sigma\in
\M_+^{4 , \alpha}$ and $||\sigma-\sigma_0||_{C^{2,\alpha}}<\epsilon$, then for any $\rho\in \Sp^{r,\alpha}$, $r\ge 2$,
$$
||\phi||_{C^0}\le C||\rho||_{C^{1,\alpha}}
$$
where $\phi=\Psi(\sigma,\rho)$.
\end{lma}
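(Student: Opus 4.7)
The plan is to treat $L\phi := \mathcal{L}(\phi_u,\phi_v)+H\phi$ as a second-order, uniformly elliptic, divergence-form operator on $(\Sigma,\sigma)$, and to bound the solution $\phi$, which by construction is $L^2$-orthogonal to $\ker L$, by estimating the right-hand side $g:=\mathcal{L}(c_1,c_2)-T$ in a norm that costs only one derivative of $\rho$. By \eqref{L-e1}, when $L$ is written against the area element $\Delta\,du\,dv$, its principal part is of divergence type with symbol proportional to the inverse second fundamental form of $X(\sigma)(\Sigma)$, which is uniformly positive definite near $\sigma_0$ because $(\Sigma,\sigma_0)$ has strictly positive Gaussian curvature. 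By Nirenberg's continuity estimate for isometric embeddings (cf.\ \eqref{minimum-e2}), $X(\sigma)\to X(\sigma_0)$ in $C^{2,\alpha}$ as $\sigma\to\sigma_0$ in $C^{2,\alpha}$, so for $\epsilon$ small, $L$ is uniformly elliptic with coefficients uniformly bounded and continuous in $\sigma$. Classical infinitesimal rigidity of closed convex surfaces (cf.\ \cite{Nirenberg}) identifies $\ker L$ on $L^2(\Sigma)$ with the three-dimensional span of the coordinate functions of $X_3(\sigma)$, and the associated $L^2$-projection depends continuously on $\sigma$.

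Next I would exploit the divergence structure of $\mathcal{L}$ to rewrite the right-hand side in a form that involves only first derivatives of $\rho$. By \eqref{c-e1}, $c_1$ and $c_2$ are algebraic combinations of first covariant derivatives of $\rho$, and by \eqref{T-e1}, $T$ is algebraic in $\rho$ itself. Combined with \eqref{L-e1}, this allows one to recast the equation as $L\phi=\mathrm{div}\,G+T_0$ for some $G,T_0\in L^\infty(\Sigma)$ satisfying $\|G\|_{L^\infty}+\|T_0\|_{L^\infty}\le C_1\|\rho\|_{C^1}\le C_1\|\rho\|_{C^{1,\alpha}}$, where $C_1$ depends only on $\sigma_0$ through the uniform $C^{2,\alpha}$ bounds on $X(\sigma)$ and its first and second fundamental forms.

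Finally I would invoke a Stampacchia-type $L^\infty$ estimate (or a Moser iteration) for weak solutions of second-order divergence-form elliptic equations with bounded measurable coefficients and right-hand side of the form $\mathrm{div}\,G+T_0$, applied on the closed subspace $(\ker L)^\perp$, combined with the standard $H^1$ estimate from Lax--Milgram and the Fredholm alternative to absorb the auxiliary $L^2$-norm of $\phi$. This yields $\|\phi\|_{C^0}\le C(\|G\|_{L^\infty}+\|T_0\|_{L^\infty})\le C\|\rho\|_{C^{1,\alpha}}$, with $C$ depending only on $\sigma_0$ after possibly shrinking $\epsilon$. The main obstacle is ensuring that the constant in this $L^\infty$-estimate for $L$ restricted to $(\ker L)^\perp$ can be chosen uniformly over a $C^{2,\alpha}$-neighborhood of $\sigma_0$; this reduces to the continuous dependence in $\sigma$ of both the coefficients of $L$ and of the projection onto $\mathrm{span}(X_3(\sigma))$, which is exactly what \eqref{minimum-e2} provides.
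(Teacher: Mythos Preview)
The paper does not actually prove this lemma; it simply quotes the result from \cite[Lemma~5.2]{MiaoShiTam}. Your outline is therefore an independent argument, and it is essentially correct: the key observations that the operator $\mathcal{L}(\phi_u,\phi_v)+H\phi$ is uniformly elliptic in divergence form with $C^{0,\alpha}$ coefficients controlled by $\|X(\sigma)\|_{C^{2,\alpha}}$, and that $\Delta\,\mathcal{L}(c_1,c_2)$ is a pure divergence of a vector field bounded by $C\|\rho\|_{C^1}$, are right and are what make the $C^0$ estimate cost only one derivative of $\rho$. The approach taken in the cited reference (and invoked again later in the present paper, in the proof of Theorem~\ref{C1-t1}) is somewhat different in flavor: it uses the integral representation of the solution via the Green's function of the homogeneous operator on a fixed convex surface, together with the $C^{2,\alpha}$ continuity of $X(\sigma)$ in $\sigma$, rather than a De Giorgi--Nash--Moser $L^\infty$ bound. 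Your route is more self-contained; the Green's function route makes the constant manifestly uniform once one has the Green's function for $\sigma_0$.

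One point in your sketch needs more care than you indicate. The bilinear form associated to $L$ is not coercive on all of $W^{1,2}$ because the zero-order term $+H\phi$ has the wrong sign after integration by parts, so Lax--Milgram alone does not give the $L^2$ (or $H^1$) bound you need to feed into the Moser iteration. You must instead argue on $(\ker L_\sigma)^\perp$ via a compactness/contradiction: if the a~priori $L^2$ bound failed uniformly as $\sigma\to\sigma_0$, normalize and pass to a weak limit to produce a nonzero element of $\ker L_{\sigma_0}$ orthogonal to $\mathrm{span}\,X_3(\sigma_0)$, a contradiction. This is precisely where the continuous dependence of both the coefficients and the kernel projection on $\sigma$---which you correctly identify as coming from Nirenberg's estimate---is actually used, and it should be stated as a separate step rather than folded into ``Lax--Milgram and the Fredholm alternative.''
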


 Let $\sigma\in \M_+^{k, \alpha}$, $k\ge 4$ and let $X = X(\sigma)$ be a given isometric embedding of $(\Sigma,\sigma)$. By \cite[Section 5]{Nirenberg}, for any $\tau\in \M_+^{k, \alpha}$ which is close to $\sigma$ in the $C^{2,\alpha}$ norm, there exists an  isometric embedding of $(\Sigma,\tau)$ in the form of $X+Y$ where $Y$ is obtained as follows: Let $Y_0=0$ and $Y_m=\Phi(\sigma,\rho_{m-1})$, where $\rho_{m-1}=\tau-\sigma -  (d Y_{m-1})^2$, then $ \{ Y_m\}$ converges to $Y$ in the $C^{2,\alpha}$ norm such that $Y $ satisfies:
 \be \label{eq-isoembedding}
 2dX\cdot dY=\tau-\sigma - (dY)^2.
 \ee
 Let us denote this particular solution $Y$ to \eqref{eq-isoembedding}
  by $Y(\sigma,\tau)$. Since both $ X (\sigma) $ and $ X(\sigma)  + Y(\sigma, \tau) $ are in  $ \E^{k, \alpha}$,
we know $ Y(\sigma, \tau)$  is of $ C^{k, \alpha}$.

In \cite[Lemma 5.3]{MiaoShiTam}, the following $ C^{2,\alpha} $ estimate of $ Y $ was proved.

\begin{lma} \label{lma-ap-3}
Let  $ \sigma^0 \in \M^{5, \alpha}_+$.
There exists positive numbers $ \delta$, $\epsilon $
and $ C$, depending only on $ \sigma^0$,
with the following properties:

Suppose $ \sigma \in \M^{4, \alpha}_+$ satisfying
$$ || \sigma^0 - \sigma ||_{C^{2, \alpha}} < \delta .$$
Let $ X (\sigma) $ be an isometric embedding of $ (\Sigma, \sigma)$.
Then for any  $ \tau \in \M^{2, \alpha}_+$ satisfying
$$ || \sigma - \tau ||_{C^{2, \alpha}} < \epsilon , $$
the solution  $Y=Y(\sigma,\tau)$ to \eqref{eq-isoembedding}
satisfies
$$ || Y  ||_{C^{2, \alpha} } \leq C || \sigma - \tau ||_{C^{2, \alpha}} .$$

\end{lma}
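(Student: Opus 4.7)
The plan is to identify $Y = Y(\sigma, \tau)$ (after the normalization built into Nirenberg's construction) with the difference of the two canonical isometric embeddings $X(\tau) - X(\sigma)$, and then establish a uniform version of the Lipschitz estimate \eqref{minimum-e2} over a $C^{2,\alpha}$-neighborhood of $\sigma^0$. The core ingredient is a uniform bound on the linear map $\Phi(\sigma,\cdot)$: I would prove that there exist constants $\delta, \epsilon, C_0 > 0$ depending only on $\sigma^0$ such that whenever $\|\sigma - \sigma^0\|_{C^{2,\alpha}} < \delta$,
\begin{equation*}
\|\Phi(\sigma, \rho)\|_{C^{2,\alpha}} \le C_0 \|\rho\|_{C^{2,\alpha}}, \qquad \forall\, \rho \in \Sp^{2,\alpha}.
\end{equation*}
Once this is available, the nonlinear bound follows by running the iteration $Y_{m+1} = \Phi(\sigma,\tau-\sigma-(dY_m)^2)$, $Y_0 = 0$.

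For the linear estimate, I would apply interior Schauder theory to the elliptic equation \eqref{phi-e1} for $\phi = \Psi(\sigma,\rho)$. Its coefficients are polynomial in $L, M, N, \Delta, H, K$, all built from $X(\sigma) \in \E^{4,\alpha}$; by the $C^{2,\alpha}$-continuity of the embedding map (see \cite{Nirenberg} and \eqref{minimum-e2}), these coefficients are uniformly controlled in $C^{2,\alpha}$ throughout a $C^{2,\alpha}$-neighborhood of $\sigma^0$, and positive Gaussian curvature provides uniform ellipticity. The right-hand side $\mathcal{L}(c_1,c_2)-T$, read off from \eqref{L-e1}--\eqref{T-e1}, is bounded in $C^\alpha$ by a constant multiple of $\|\rho\|_{C^{2,\alpha}}$. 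Since the operator in \eqref{phi-e1} has a three-dimensional kernel spanned by the components of $X_3$, Schauder a priori produces a $\|\phi\|_{C^0}$ term; this is absorbed using the orthogonality condition built into $\Psi$ together with Lemma \ref{phi-0-l1}, yielding $\|\phi\|_{C^{2,\alpha}} \le C \|\rho\|_{C^{2,\alpha}}$. Substituting into the integration formulas \eqref{Y-e1}--\eqref{p-e1} and using $X(\sigma) \in \E^{4,\alpha}$ delivers the uniform linear bound.

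For the nonlinear iteration, set $\eta = \|\sigma-\tau\|_{C^{2,\alpha}}$, assume $\eta < \epsilon$, and write $\rho_m = \tau - \sigma - (dY_m)^2$. By the linear estimate and the algebra property of Hölder spaces,
\begin{equation*}
\|Y_{m+1}\|_{C^{2,\alpha}} \le C_0 \eta + C_0 C_1 \|Y_m\|_{C^{2,\alpha}}^2,
\end{equation*}
so a standard induction yields $\|Y_m\|_{C^{2,\alpha}} \le 2C_0 \eta$ once $4 C_0^2 C_1 \eta \le 1$. The telescoping identity
\begin{equation*}
Y_{m+1} - Y_m = - \Phi\bigl(\sigma,\, d(Y_m + Y_{m-1}) \cdot d(Y_m - Y_{m-1})\bigr)
\end{equation*}
combined with the same linear estimate produces $\|Y_{m+1}-Y_m\|_{C^{2,\alpha}} \le \tfrac{1}{2}\|Y_m - Y_{m-1}\|_{C^{2,\alpha}}$ for $\eta$ small, giving a Cauchy sequence whose limit $Y$ satisfies \eqref{eq-isoembedding} and the stated bound.

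The main obstacle is the uniform linear estimate: keeping the Schauder constants and the continuity of $\sigma \mapsto X(\sigma)$ controlled purely in terms of $\sigma^0$ on a fixed $C^{2,\alpha}$-ball, and correctly handling the kernel of the operator in \eqref{phi-e1} via the orthogonality condition and Lemma \ref{phi-0-l1}. A secondary subtlety is the apparent derivative loss in $(dY_m)^2$, which sits only in $C^{1,\alpha}$ when $Y_m \in C^{2,\alpha}$; this is absorbed by the extra $C^{4,\alpha}$-regularity of $\sigma$, which feeds into both the coefficients of \eqref{phi-e1} and the integration formulas \eqref{Y-e1}, so that the quadratic term can be estimated in the appropriate norm. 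Alternatively one first proves convergence in $C^{1,\alpha}$ and then promotes $Y$ to $C^{2,\alpha}$ using Nirenberg's regularity theorem for isometric embeddings of $C^{2,\alpha}$ metrics with positive Gaussian curvature.
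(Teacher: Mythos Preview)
The paper does not give its own proof of this lemma: it simply quotes the result as \cite[Lemma 5.3]{MiaoShiTam}. So there is no proof in the paper to compare against directly.

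That said, your outline is essentially the argument one expects in \cite{MiaoShiTam}, and it parallels exactly what the present paper carries out at higher regularity in Lemma~\ref{phi-l0} and in the proof of Lemma~\ref{lma-ap-5}: a uniform Schauder estimate for $\phi=\Psi(\sigma,\rho)$ on a $C^{2,\alpha}$-neighborhood of $\sigma^0$ (using Lemma~\ref{phi-0-l1} to kill the $\|\phi\|_{C^0}$ term coming from the three-dimensional kernel), then passing to $Y=\Phi(\sigma,\rho)$ via \eqref{Y-e1}--\eqref{p-e1}, and finally feeding this linear bound into Nirenberg's contraction iteration. Your treatment of the iteration (uniform bound by induction, contraction via the telescoping identity) is the standard one.

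The one point that deserves more care is precisely the derivative-loss issue you flag at the end. Your first suggested fix (``absorbed by the extra $C^{4,\alpha}$-regularity of $\sigma$'') is too vague as stated: the extra smoothness of $\sigma$ controls the coefficients of \eqref{phi-e1} and the weights in \eqref{Y-e1}, but it does not by itself put $(dY_m)^2$ back into $C^{2,\alpha}$. What actually makes the scheme close in $C^{2,\alpha}$ is the specific structure of \eqref{phi-e1}: the right-hand side has the divergence form $\mathcal{L}(c_1,c_2)-T$, so the second derivatives of $\rho$ enter only through $\mathcal L$ applied to first derivatives, and one can estimate $\phi_u-c_1,\ \phi_v-c_2$ (the quantities that feed into $p_1,p_2$) directly rather than $\phi_u$ and $c_1$ separately. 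This is how Nirenberg avoids the loss, and it is implicit in \cite{MiaoShiTam}. Your alternative route (converge in a weaker norm, then upgrade via regularity of isometric embeddings) also works but is a detour. Apart from this point, your proposal is correct and aligned with the literature.
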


For the purpose in this paper, we want to obtain the corresponding $ C^{k, \alpha} $ estimate ($k \ge 4$) of $ \phi $ and $ Y$. We have

\begin{lma} \label{lma-ap-5}
Let $ k \ge 4 $ be an integer. Let  $ \sigma_0 \in
\M_+^{k+1, \alpha} $. There exists positive numbers $ \delta$,
$\epsilon $ and $ C$, depending only on $ \sigma_0$, with
the following properties:

Suppose  $ \sigma \in \M^{k, \alpha}_+\cap B(\sigma_0,1)$ where $B(\sigma_0,1)$
 is the open ball in $\M_+^{k, \alpha}$ with center at
 $\sigma_0$ and radius 1.
 Let $X(\sigma)$ be an isometric embedding of $(\Sigma,\sigma)$ in $\R^3$.
 Suppose
$$ || \sigma_0 - \sigma ||_{C^{k, \alpha}} < \delta . $$
 Then for any
 $ \tau \in \M_+^{k, \alpha}\cap B(\sigma_0,1)$ satisfying
$$ || \sigma - \tau ||_{C^{k, \alpha}} <
\epsilon, $$
the solution  $Y=Y(\sigma,\tau)$ to \eqref{eq-isoembedding}
satisfies
$$ ||   Y ||_{C^{k, \alpha} }
\leq C || \sigma - \tau ||_{C^{k, \alpha}}  . $$
Thus, if $ X (\tau ) = X(\sigma)+Y (\sigma, \tau) $ is the corresponding
isometric embedding of $(\Sigma,\tau)$, then
$$ || X( \s) - X ( \tau)  ||_{C^{k, \alpha} }
\leq C || \sigma - \tau ||_{C^{k, \alpha}}  . $$
\end{lma}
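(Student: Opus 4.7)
The plan is to extend the iteration scheme from Lemma \ref{lma-ap-3} so as to control higher H\"older norms. I would keep the same iterates $Y_m = \Phi(\sigma,\rho_{m-1})$ with $\rho_{m-1} = \tau-\sigma-(dY_{m-1})^2$, which already converge to $Y(\sigma,\tau)$ in $C^{2,\alpha}$, and show that their $C^{k,\alpha}$ norms are uniformly bounded by $C\|\sigma-\tau\|_{C^{k,\alpha}}$. The estimate on $\|X(\sigma)-X(\tau)\|_{C^{k,\alpha}}$ is then immediate from $X(\tau) = X(\sigma) + Y$.

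The main technical ingredient I would establish first is an improved linear estimate for $\Phi(\sigma,\cdot)$: if $\sigma\in C^{k,\alpha}$ is sufficiently close to $\sigma_0\in C^{k+1,\alpha}$ and $\rho\in C^{k-1,\alpha}$, then
\begin{equation*}
\|\Phi(\sigma,\rho)\|_{C^{k,\alpha}} \le C\|\rho\|_{C^{k-1,\alpha}},
\end{equation*}
with $C=C(\sigma_0)$. The proof follows Nirenberg's construction in two stages: apply interior Schauder estimates to the linear elliptic equation \eqref{phi-e1} for $\phi=\Psi(\sigma,\rho)$, whose coefficients are expressible through the first and second fundamental forms of $X(\sigma)\in C^{k,\alpha}$ and hence lie in $C^{k-2,\alpha}$; combine with the $C^0$ estimate of Lemma \ref{phi-0-l1} to obtain a $C^{k-1,\alpha}$ bound on $\phi$; then insert into the quadrature formula \eqref{Y-e1} to get the claimed $C^{k,\alpha}$ bound on $\Phi(\sigma,\rho)$.

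With this linear estimate in hand, the bootstrap on the nonlinear iteration is straightforward. From Lemma \ref{lma-ap-3} we have $\|Y_m\|_{C^{2,\alpha}} \le C\|\sigma-\tau\|_{C^{2,\alpha}} < C\epsilon$ uniformly in $m$. The multiplicative H\"older inequality gives
\begin{equation*}
\|(dY_m)^2\|_{C^{k-1,\alpha}} \le C\|dY_m\|_{C^0}\|dY_m\|_{C^{k-1,\alpha}} \le C\|Y_m\|_{C^{2,\alpha}}\|Y_m\|_{C^{k,\alpha}},
\end{equation*}
so that
\begin{equation*}
\|Y_{m+1}\|_{C^{k,\alpha}} \le C\|\rho_m\|_{C^{k-1,\alpha}} \le C\|\sigma-\tau\|_{C^{k,\alpha}} + C_1\epsilon\|Y_m\|_{C^{k,\alpha}}.
\end{equation*}
Shrinking $\epsilon$ so that $C_1\epsilon<1/2$, induction in $m$ yields the uniform bound $\|Y_m\|_{C^{k,\alpha}} \le 2C\|\sigma-\tau\|_{C^{k,\alpha}}$. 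Combining this with the $C^{2,\alpha}$ convergence $Y_m\to Y$ (by interpolation or lower semicontinuity of H\"older norms under weak convergence), the desired $C^{k,\alpha}$ estimate for $Y$ follows.

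The main obstacle is the refined linear estimate for $\Phi$. Its proof requires carefully tracking the regularity of the various expressions entering \eqref{L-e1}--\eqref{p-e1}, and applying Schauder theory to the elliptic operator in \eqref{phi-e1} whose principal-part coefficients inherit only the $C^{k-2,\alpha}$ regularity of the second fundamental form of $X(\sigma)$; verifying that the gain of two derivatives from Schauder, combined with the structure of the right-hand side, suffices to recover the asserted regularity of $\phi$ is the delicate point.
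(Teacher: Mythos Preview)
Your iteration strategy is natural, but the linear estimate you need,
\[
\|\Phi(\sigma,\rho)\|_{C^{k,\alpha}} \le C\|\rho\|_{C^{k-1,\alpha}},
\]
is one derivative too strong and cannot be obtained from the construction in \eqref{phi-e1}--\eqref{p-e1}. Even with the Schauder bound $\phi\in C^{k-1,\alpha}$ that you correctly identify, the quadrature formula \eqref{Y-e1} contains the term $X_3\,p_1$, and by \eqref{p-e1} the coefficients $p_1,p_2$ involve $\phi_u-c_1,\ \phi_v-c_2$ as well as the entries $l,m,n$ of the inverse of the second fundamental form. All of these are only $C^{k-2,\alpha}$, so $Y_u,Y_v\in C^{k-2,\alpha}$ and hence $\Phi(\sigma,\rho)\in C^{k-1,\alpha}$, not $C^{k,\alpha}$. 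This is exactly the content of Lemma~\ref{phi-l0}, where the output regularity is $s=\min\{r,k-1\}$: the ceiling $k-1$ comes from the regularity of $\Pi(X(\sigma))$ and cannot be improved through $\Phi$. As a consequence your bootstrap does not close: controlling $\|Y_{m+1}\|_{C^{k-1,\alpha}}$ already requires $\|(dY_m)^2\|_{C^{k-1,\alpha}}$, i.e.\ a bound on $\|Y_m\|_{C^{k,\alpha}}$, which the previous step does not furnish.

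The paper avoids this loss by abandoning the $\Phi$--iteration for the higher estimate and instead bootstrapping the embeddings $X(\sigma),X(\tau)$ directly via Nirenberg's Darboux equation for $\rho=\tfrac12|X|^2$ (equation (3.7) in \cite{Nirenberg}). Differentiating that equation yields a linear elliptic equation \eqref{eq-dpde} for the derivatives of $\rho$ whose coefficients $\bA,\bB,\bC$ and right-hand side $\bP$ are polynomial in $\sigma$ (up to third derivatives), $\rho$ (up to second derivatives), and $X$ (up to first derivatives) --- crucially, no second fundamental form enters the principal part. This allows an induction from $C^{2,\alpha}$ up to $C^{k,\alpha}$ with constants depending only on $\sigma_0$. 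The same scheme applied to the difference $\tilde\rho-\rho$ (equation \eqref{eq-dpdedf}) then upgrades the $C^{2,\alpha}$ closeness from Lemma~\ref{lma-ap-3} to the desired $C^{k,\alpha}$ estimate on $X(\sigma)-X(\tau)=Y$. So the missing idea in your plan is precisely this passage through the scalar function $|X|^2$ and its Darboux equation, which recovers the lost derivative that the $\Phi$--operator cannot.
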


\begin{proof} Let $X_0$ be  a fixed isometric embedding of $(\Sigma,\sigma_0)$ so that the origin is the center of the largest inscribed sphere of $X_0 (\Sigma)$ in $ \R^3$. Let $ \{(u, v)\}$ be a fixed coordinates chart of $ \Sigma $
and let $ \Omega \subset \Sigma $ be an open set
whose closure is covered by $\{(u, v)\}$.
On $ \Omega$, we have $|X_0|\ge C$ and $K_0(\la X_0,(X_0)_1 \wedge (X_0)_2\ra)^2\ge C$. Here and below, $C$ always denote a positive constant depending only on $\sigma_0$, $K_0$ denotes the Gaussian curvature of $X_0$, and $(X_0)_1=(X_0)_u$, etc.

By Lemma \ref{lma-ap-3}, there exist positive constants $\delta$, $\e$ and $C $, depending only on $\sigma_0$, such that for any $\sigma, \tau \in \M_+^{k,\alpha}$ with $||\sigma_0-\sigma||_{C^{k,\alpha}}<\delta$ and $||\tau-\sigma||_{C^{k,\alpha}}< \e$,  there exists an
isometric embedding $ X(\sigma)$ of $(\Sigma, \sigma)$ such that
\be \label{X-2a-e1}
 || X(\sigma) - X _0 ||_{C^{2, \alpha} } \le C || \sigma - \sigma_0 ||_{C^{2, \alpha}}
 \ee
and the solution  $Y(\sigma,\tau)$ to \eqref{eq-isoembedding} (with $ X = X(\sigma)$) satisfies
\be\label{Y-2a-e1}
||Y ( \sigma, \tau) ||_{C^{2,\alpha}} \le C||\tau-\sigma||_{C^{2,\alpha}}.
\ee
For such given $ \sigma $ and $ \tau$,  let $ X ( \tau) = X(\sigma) + Y (\sigma, \tau)$ and let $K(\sigma)$, $K(\tau)$ be the Gaussian curvature of $ X (\sigma)$, $X(\tau)$.
Assuming $ \delta$, $ \epsilon$ are sufficiently small, by \eqref{X-2a-e1} and \eqref{Y-2a-e1} we have
$$| X (\sigma) | \ge C, |X(\tau) |\ge C, $$
$$K(\sigma)(\la X(\sigma),(X(\sigma))_1 \wedge (X(\sigma))_2\ra)^2\ge C, $$
$$K(\tau)(\la X(\tau),(X(\tau))_1 \wedge (X(\tau))_2\ra)^2\ge C .$$
Here and below
we always consider points in $ \Omega $.

Consider  $\rho=\frac12|X (\sigma) |^2$ as in \cite[Section 3]{Nirenberg}.
Let
$$
\bA = \rho_{11}-\Gamma^1_{11}\rho_1-\Gamma_{11}^2\rho_2-E
$$
$$
\bB = \rho_{22}-\Gamma_{22}^1\rho_1-\Gamma_{22}^2\rho_2-G
$$
$$
\bC = \rho_{12}-\Gamma_{12}^1\rho_1-\Gamma_{12}^2\rho_2-F,
$$
where $\Gamma_{ij}^k$, $i,j,k \in \{1, 2\}$,  are Christoffel symbols. By the equation (3.7) in \cite{Nirenberg},
\be \label{eq-ellipticity}
\begin{split}
 \bA\bB-\bC^2 &
 = \Delta^2 K(\la X(\sigma),X_3(\sigma)\ra)^2\\
 & = K(\la X(\sigma),X_1(\sigma)\wedge X_2(\sigma)\ra)^2 \\
& \ge  C .
\end{split}
\ee
Differentiate this equation with respect to the $i$-th variable, we have
\be \label{eq-dpde}
\bB\rho_{i11}+\bA\rho_{i22}-2\bC\rho_{i11}=\bP,
\ee
where
$\bP=\bP(\s,\p\s,\p\p\s,\p\p\p\s,\p\rho,\p\p \rho,X(\sigma) ,\p X (\sigma))$
is some fixed polynomial function of its arguments.
Here we used  a basic fact that the $m$-th derivatives of $ X$, $ m \ge 2$, can be
expressed as a linear combination of $X$, $X_1$, $X_2$ with
coefficients involving derivatives of $ \sigma$, $ \rho $ of order at most $m$
(see p.348 in \cite{Nirenberg}).
Now, since $|| \s ||_{C^{3, \alpha}}$, $ || \rho ||_{C^{2,\alpha} }$, $ || X (\sigma) ||_{C^{2, \alpha} }$
are all bounded, it follows from \eqref{eq-ellipticity} and \eqref{eq-dpde}  that
 $ || \rho ||_{ C^{3,\alpha} }$ is bounded. This in turn implies that
 $ || X(\sigma )||_{ C^{3,\alpha} }$ is bounded. Next, since the $ || \s ||_{ C^{4 , \alpha} }$, $|| \rho ||_{ C^{3,\alpha} }$, $ || X (\sigma) ||_{ C^{3, \alpha} }$ are bounded, we see
 $ || \rho ||_{ C^{4,\alpha} } $ is  bounded, which then implies  $ || X (\sigma ) ||_{ C^{4, \alpha} }$ is bounded. Hence,
\begin{equation*}
||\rho||_{C^{4,\alpha} }+||X(\sigma)||_{C^{4,\alpha}}\le C.
\end{equation*}
Continue in this way and use the fact that $ || \s ||_{ C^{k, \alpha} } $ is bounded, we conclude that
\be\label{k-1norm-e1}
||\rho||_{C^{k,\alpha}}+||X(\sigma)||_{C^{k,\alpha}}\le C.
\ee
Similarly, we have
\be\label{k-1norm-e2}
|| \tilde{\rho}||_{C^{k,\alpha}}+||X(\tau)||_{C^{k,\alpha}}\le C,
\ee
where  $\tilde \rho=\frac12|X(\tau)|^2$, and
$ \tilde \rho $ satisfies
\be \label{eq-dpdet}
\tilde \bB \tilde \rho_{i11}+ \tilde \bA \tilde \rho_{i22}-2 \tilde \bC \tilde \rho_{i11}
= \tilde \bP
\ee
where $ \tilde \bA$, $ \tilde \bB$, $ \tilde \bC $, $ \tilde \bP$ are
constructed in the same way as $ \bA$, $ \bB$, $ \bC$,  $ \bP$.
By \eqref{eq-dpde} and \eqref{eq-dpdet}, we have
\be \label{eq-dpdedf}
\begin{split}
 \ &   \ \tilde \bB\lf(\tilde \rho_{i11}-\rho_{i11}\ri)+\tilde \bA\lf(\tilde\rho_{i22}-\rho_{i22}\ri)-2\tilde \bC\lf(\tilde\rho_{i12}-\rho_{i12}\ri) \\
 = & \ \tilde \bP-\bP+(\bB-\tilde \bB)\rho_{i11}+  (\bA-\tilde \bA)\rho_{i22}-2(\bC-\tilde \bC)\rho_{i12}.\\
  \end{split}
\ee
By \eqref{Y-2a-e1}, \eqref{k-1norm-e1} and \eqref{k-1norm-e2},
 we have
\be \label{eq-start}
||\rho-\tilde\rho||_{C^{2,\alpha}}+||X(\sigma)-X(\tau)||_{C^{2,\alpha}}\le C||\s-\tau||_{C^{k,\alpha}}.
\ee
Now suppose for some integer $ l $ satisfying $2\le l<k$, we have
 $$
||\rho-\tilde\rho||_{C^{l,\alpha}}\le C||\s-\tau||_{C^{k,\alpha}}.
$$
By \eqref{k-1norm-e1}, \eqref{k-1norm-e2} and \eqref{eq-start}, we then have
$$
||X(\sigma)-X(\tau)||_{C^{l,\alpha}}\le C||\s-\tau||_{C^{k,\alpha}} ,
$$
where we also used the previously mentioned fact regarding writing the derivatives of $ X(\s) , X( \tau)$
in terms of those of $ \rho, \tilde \rho $ (p.348 in \cite{Nirenberg}).
On the other hand, we have
$$
||\bA-\tilde \bA||_{C^{l-2,\alpha}} \le C\lf(||\rho-\tilde\rho||_{C^{l,\alpha}}+||\s-\tau||_{C^{l-1,\alpha}}\ri),
$$
$$
||\bB-\tilde \bB||_{C^{l-2,\alpha}} \le C \lf(||\rho-\tilde\rho||_{C^{l,\alpha}}+||\s-\tau||_{C^{l-1,\alpha}}\ri),
$$
$$
||\bC-\tilde \bC||_{C^{l-2,\alpha}} \le C\lf(||\rho-\tilde\rho||_{C^{l,\alpha}}+||\s-\tau||_{C^{l-1,\alpha}}\ri),
$$
and
$$
||\bP-\tilde \bP||_{C^{l-2,\alpha}}\le C\lf(||\rho-\tilde\rho||_{C^{l,\alpha}}+||\s-\tau||_{C^{l+1,\alpha}}
+||X(\s)-X(\tau)||_{C^{l-1,\alpha}}\ri).
$$
Since $l+1\le k$, by \eqref{eq-dpdedf}  we conclude
$$
||\rho-\tilde\rho||_{C^{l+1,\alpha}}\le C||\s-\tau||_{C^{k,\alpha}},
$$
and therefore
$$
||X(\s)-X(\tau)||_{C^{l+1,\alpha}}\le C||\s-\tau||_{C^{k,\alpha}}
$$
The result follows by induction.
 \end{proof}

 \begin{lma}\label{phi-l0}
 Let $\sigma_0\in \M_+^{k+1,\alpha}$ ($k \ge 4)$.
 Let $ \epsilon > 0 $ be as in Lemma \ref{lma-ap-5}.
 Suppose $\sigma\in \M_+^{k,\alpha}$ and  $||\sigma-\sigma_0||_{C^{k,\alpha} }<\e$.
Let $ X = X( \sigma) $ be an isometric embedding of $(\Sigma, \sigma)$ into $ \R^3$.
Given any  $\rho\in\Sp^{r,\alpha} $ ($r\ge 2$),
let  $ \phi = \Psi (\sigma, \rho) $ be  the unique solution of \eqref{phi-e1} which is $L^2$-orthogonal to the coordinates functions of the unit normal of $X(\sigma)$;
let $Y = \Phi(\sigma,\rho)$  be the unique solution of \eqref{eq-truelinear} which vanishes at a fixed point on $ \Sigma $ and  is obtained by integrating $ Y_u $ and $Y_v $ defined by  \eqref{Y-e1} and \eqref{p-e1}.
 There exist   $C>0$ depending only  on $\sigma_0$ and $ \ep$
 such that
   \be \label{eq-rs}
||\phi||_{C^{s,\alpha}}\le C  ||\rho||_{C^{r,\alpha}}
  \ee
and
\be  \label{eq-estofPhi}
||\Phi(\sigma,\rho)||_{C^{s,\alpha} } \le C||\rho||_{ C^{r,\alpha} }
\ee
where $s=\min\{r,k-1\}$.
\end{lma}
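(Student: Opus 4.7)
The plan is to bootstrap the $C^{0}$ estimate of Lemma \ref{phi-0-l1} up to the desired $C^{s,\alpha}$ bound for $\phi$ by applying interior Schauder theory to the elliptic equation \eqref{phi-e1}, and then to read off the $C^{s,\alpha}$ estimate for $Y=\Phi(\sigma,\rho)$ directly from the formulas \eqref{Y-e1}--\eqref{p-e1}. The key preparatory fact is that Lemma \ref{lma-ap-5} gives a uniform $C^{k,\alpha}$ bound on $X=X(\sigma)$ in terms of $\sigma_0$; consequently the first and second fundamental form coefficients $E, F, G, L, M, N$ and the derived quantities $\Delta, K, H$ are uniformly controlled in $C^{k-2,\alpha}$, with $K, H, \Delta$ bounded away from zero.

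For $\phi$, inspection of \eqref{L-e1} shows that $\mathcal{L}(\phi_u,\phi_v)+H\phi$ is a second-order linear operator whose principal symbol is a positive multiple of $N\xi^{2}-2M\xi\eta+L\eta^{2}$; positivity of $K=(LN-M^{2})/\Delta^{2}$ together with $H>0$ makes the operator uniformly elliptic with second-order coefficients in $C^{k-2,\alpha}$ and first-order coefficients in $C^{k-3,\alpha}$, which suffices for Schauder at order $s=\min\{r,k-1\}$. A direct inspection of \eqref{c-e1}--\eqref{T-e1} together with the product rule shows that $T \in C^{\min\{r, k-2\},\alpha}$ and $\mathcal{L}(c_1,c_2)\in C^{\min\{r-2,k-3\},\alpha}$, so
\[
\|\mathcal{L}(c_{1},c_{2})-T\|_{C^{s-2,\alpha}} \le C\|\rho\|_{C^{r,\alpha}},
\]
where the index $s-2\le \min\{r-2,k-3\}$ is exactly what the definition $s=\min\{r,k-1\}$ provides. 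Since the kernel of $\mathcal{L}(\cdot_u,\cdot_v)+H\cdot$ is three-dimensional (spanned by the components of $X_3$) and $\phi$ is chosen $L^{2}$-orthogonal to it, the global Schauder estimate on the closed surface $\Sigma=S^{2}$ yields
\[
\|\phi\|_{C^{s,\alpha}} \le C\lf(\|\mathcal{L}(c_{1},c_{2})-T\|_{C^{s-2,\alpha}} + \|\phi\|_{C^{0}}\ri) \le C\|\rho\|_{C^{r,\alpha}},
\]
the $C^{0}$ term being absorbed by Lemma \ref{phi-0-l1}. This proves \eqref{eq-rs}.

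For $Y=\Phi(\sigma,\rho)$, the formulas \eqref{Y-e1} and \eqref{p-e1} express $Y_{u}$ and $Y_{v}$ as algebraic combinations of the components $\rho_{ij}$ (in $C^{r,\alpha}$), of $c_{1},c_{2}$ (in $C^{r-1,\alpha}$), of $X(\sigma)$ and its first derivatives (in $C^{k-1,\alpha}$), and of $\phi,\phi_u,\phi_v$ (in $C^{s-1,\alpha}$ by the previous step). Since $s\le r$ and $s\le k-1$, both $Y_{u}$ and $Y_{v}$ lie in $C^{s-1,\alpha}$ with norm bounded by $C\|\rho\|_{C^{r,\alpha}}$. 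The compatibility $(Y_{u})_{v}=(Y_{v})_{u}$ is built into the construction of $\phi$ from \cite{Nirenberg}, so integration on the simply connected $S^{2}$ produces a well-defined $Y\in C^{s,\alpha}$ with $\|Y\|_{C^{s,\alpha}}\le C\|\rho\|_{C^{r,\alpha}}$, giving \eqref{eq-estofPhi}.

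The only delicate point is the book-keeping: one must verify that the two derivatives lost in passing from $X\in C^{k,\alpha}$ to the second fundamental form of $X(\Sigma)$, together with one additional derivative arising inside the outer $\partial_u,\partial_v$ of $\mathcal{L}$, still leave the right-hand side of \eqref{phi-e1} in $C^{s-2,\alpha}$. The asymmetric bound $s\le k-1$ in the definition of $s$ is exactly what the available regularity of $X(\sigma)$ supports, and one could not in general push $s$ up to $r$ without requiring $X(\sigma)$ to be at least $C^{r+1,\alpha}$.
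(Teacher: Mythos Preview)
Your argument is correct and follows essentially the same approach as the paper: use Lemma~\ref{lma-ap-5} to control $X(\sigma)$ in $C^{k,\alpha}$, bound the right-hand side $\mathcal{L}(c_1,c_2)-T$ in $C^{s-2,\alpha}$, apply Schauder estimates together with the $C^0$ bound from Lemma~\ref{phi-0-l1} to obtain \eqref{eq-rs}, and then read off \eqref{eq-estofPhi} from the explicit formulas \eqref{Y-e1}--\eqref{p-e1}. Your regularity bookkeeping is more explicit than the paper's, but the logical structure is identical.
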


\begin{proof}
Let $ X_0 $ be a fixed isometric embedding of $ (\Sigma, \sigma_0)$.
By Lemma \ref{lma-ap-5},  we may assume $ X = X(\sigma) $
is chosen such that
 $||X - X_0||_{C^{k,\alpha} } \le C$, where $ C $  depends only on $\sigma_0$
 and $ \epsilon$.
 Recall that $\phi$ satisfies
\be\label{phi-e2}
\frac{1}{\Delta}\lf(\frac
  N{K\Delta}\phi_u-\frac{M}{K\Delta}\phi_v\ri)_u-
\frac{1}{\Delta}\lf(\frac
 M{K\Delta}\phi_u-\frac{L}{K\Delta}\phi_v\ri)_v+H\phi=
 \mathcal{L}(c_1,c_2) -T .
\ee
By \eqref{L-e1}, \eqref{c-e1} and \eqref{T-e1}, we have
\be
 ||  \mathcal{L}(c_1,c_2) -T ||_{ C^{s-2, \alpha} }\le C ||
\rho ||_{C^{r, \alpha} } .
\ee
Hence,
\be \label{eq-rs2}
||\phi||_{C^{s,\alpha }}\le C \lf(||\phi||_{C^0}+||\rho||_{C^{r,\alpha} } \ri).
\ee
Therefore, \eqref{eq-rs} holds by \eqref{eq-rs2} and Lemma \ref{phi-0-l1}.
Now  \eqref{eq-estofPhi} follows directly from
 \eqref{Y-e1}, \eqref{p-e1} and \eqref{eq-rs}.
\end{proof}

Now we are in a position to prove the main result of this section.

 \begin{thm}\label{C1-t1}
 Let $\sigma_0\in \M_+^{k+1,\alpha}$ ($k \ge 4 $). There exists a constant $\kappa>0$ such that the map
 $$\mathcal{F}:\M_+^{k, \alpha}\to   \Sp^{k-3,\alpha} $$
 defined by \eqref{dofmapF}  is $ C^1 $  in
 $U=\{\sigma\in \M_+^{k,\alpha} \ | \ ||\sigma-\sigma_0||_{k,\alpha}<\kappa\}$.
   \end{thm}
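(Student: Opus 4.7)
The strategy is to factor $\mathcal{F}=\mathbb{II}\circ X$, where $X:\M_+^{k,\alpha}\to\E^{k,\alpha}$ is a canonical choice of isometric embedding and $\mathbb{II}:\E^{k-1,\alpha}\to\Sp^{k-3,\alpha}$ is the second fundamental form regarded as a rational, nonlinear, second-order differential operator on embeddings. Since $\mathbb{II}$ is a rational polynomial in $X$, $\p X$ and $\p^2X$ with nonvanishing denominator $|X_u\wedge X_v|$ near $X(\sigma_0)$, the map $\mathbb{II}$ is $C^\infty$ between these H\"older spaces. By the chain rule, it therefore suffices to prove that $X$ is $C^1$ as a map from $U\to\X^{k-1,\alpha}$ for a suitably small $\kappa>0$.

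Fix $X_0=X(\sigma_0)$ and, shrinking $\kappa$ if needed, set $X(\sigma)=X_0+Y(\sigma_0,\sigma)$ for $\sigma\in U$. By Lemma \ref{lma-ap-5} the map $\sigma\mapsto X(\sigma)$ is Lipschitz from $U$ into $\E^{k,\alpha}$. The natural candidate for the derivative at $\sigma$ is the bounded linear map $h\mapsto\Phi(\sigma,h):\Sp^{k,\alpha}\to\X^{k-1,\alpha}$, which is well-defined with norm bound provided by Lemma \ref{phi-l0} with $r=k$. To verify Fr\'echet differentiability, I would pass to the limit in the defining iteration $Y_m=\Phi(\sigma,\tau-\sigma-(dY_{m-1})^2)$ used to construct $Y(\sigma,\tau)$. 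Since $\Phi$ is linear in its second argument, the limit obeys the closed-form identity
\begin{equation*}
Y(\sigma,\tau)=\Phi(\sigma,\tau-\sigma)-\Phi\bigl(\sigma,(dY(\sigma,\tau))^2\bigr).
\end{equation*}
Because $(dY)^2\in\Sp^{k-1,\alpha}$ with $\|(dY)^2\|_{C^{k-1,\alpha}}\le C\|Y\|_{C^{k,\alpha}}^2$, applying Lemma \ref{phi-l0} with $r=k-1$ (so $s=k-1$) together with Lemma \ref{lma-ap-5} yields the desired remainder estimate
\begin{equation*}
\bigl\|Y(\sigma,\tau)-\Phi(\sigma,\tau-\sigma)\bigr\|_{C^{k-1,\alpha}}\le C\|\tau-\sigma\|_{C^{k,\alpha}}^2.
\end{equation*}

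For $C^1$-ness it remains to show that $\sigma\mapsto\Phi(\sigma,\cdot)$ is continuous from $U$ into the operator-norm topology on $L(\Sp^{k,\alpha},\X^{k-1,\alpha})$. In view of the explicit formulas \eqref{Y-e1}--\eqref{p-e1}, this reduces to the analogous continuity of $\phi=\Psi(\sigma,h)$. The coefficients $L/(K\Delta)$, $M/(K\Delta)$, $N/(K\Delta)$, $H$ of the elliptic equation \eqref{phi-e1} and its inhomogeneity $\mathcal{L}(c_1,c_2)-T$ depend continuously on $\sigma$ in $C^{k-2,\alpha}$, because $X(\sigma)$ depends continuously on $\sigma$ in $\E^{k,\alpha}$ by Lemma \ref{lma-ap-5}. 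Since $\Psi$ is defined by imposing $L^2$-orthogonality to the three-dimensional kernel spanned by the coordinate functions of the unit normal $X_3(\sigma)$ (which itself varies continuously in $C^{k-1,\alpha}$), a standard Fredholm perturbation argument combined with the a priori bound of Lemma \ref{phi-l0} gives the required operator-norm continuity.

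The main technical obstacle is this last step: tracking the $\sigma$-dependence of the inverse of the Fredholm operator in \eqref{phi-e1} while its kernel moves with $\sigma$. Once this is in place, the chain rule produces
\begin{equation*}
D\mathcal{F}(\sigma)(h)=D\mathbb{II}\bigl(X(\sigma)\bigr)\bigl[\Phi(\sigma,h)\bigr],
\end{equation*}
and the continuity of $\sigma\mapsto D\mathcal{F}(\sigma)$ in $L(\Sp^{k,\alpha},\Sp^{k-3,\alpha})$ follows from the pieces above, proving that $\mathcal{F}$ is $C^1$ on $U$.
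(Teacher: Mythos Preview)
Your outline matches the paper's proof in substance: both identify the linearization through $\Phi(\sigma,\cdot)$, extract the quadratic remainder from the identity $Y(\sigma,\tau)=\Phi\bigl(\sigma,\tau-\sigma-(dY(\sigma,\tau))^2\bigr)$ together with Lemma~\ref{phi-l0}, and establish operator-norm continuity of the derivative by perturbing the elliptic equation \eqref{phi-e1} while tracking the moving kernel. The paper works directly with $\mathcal{F}$ rather than factoring through a global embedding map, but the estimates are the same.

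Two places where the paper supplies work your sketch elides. First, the closed-form identity does not follow by a naive limit in the iteration, since $Y_m\to Y$ only in $C^{2,\alpha}$ while $\Phi(\sigma,\cdot)$ requires its argument in $\Sp^{r,\alpha}$ with $r\ge 2$; the paper instead recovers $\phi_P$ from $P$ via \eqref{phiandPhi}, checks the orthogonality condition survives the limit, and uses Nirenberg's computation to see $\phi_P=\Psi(\sigma,t\eta-(dP)^2)$ directly. Second, there is a genuine subtlety in your factorization: with $X(\sigma)=X_0+Y(\sigma_0,\sigma)$ defined globally, one has $X(\tau)-X(\sigma)=Y(\sigma_0,\tau)-Y(\sigma_0,\sigma)$, which satisfies the same nonlinear equation as $Y(\sigma,\tau)$ but need not coincide with it, because the orthogonality normalization on $\phi$ is imposed relative to $X_0$ rather than $X(\sigma)$; hence the derivative of \emph{this} $X$ at $\sigma$ is not literally $\Phi(\sigma,\cdot)$. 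The discrepancy lies in the infinitesimal rigid motions, so it is harmless for $\mathcal{F}$ since $D\mathbb{II}$ annihilates them---but you must say so. The paper avoids the issue by noting that $A^{(\sigma)}$ is independent of the choice of embedding and, for the continuity step, re-basing at $\sigma_1$ via $X(\sigma)=X(\sigma_1)+Y(\sigma_1,\sigma)$; your ``standard Fredholm perturbation'' is then carried out explicitly there by projecting off the moving kernel with coefficients $\beta^i$ and invoking Nirenberg's Green-function $C^0$ bound before bootstrapping with Schauder.
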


   \begin{proof}
   Let $ \e >0 $ and $ \delta > 0$ be as  in Lemma \ref{lma-ap-5}.
   We may assume that $ \e $  is so small that the open set
   $ U_{2 \epsilon} = \{ \hat{\s} \in \Sp^{k, \alpha} \ | \ || \hat{\s} - \sigma_0 ||_{C^{k,\alpha}} < 2 \e \} $
   in $ \Sp^{k, \alpha} $ is indeed contained in $ \M^{k, \alpha}_+$.

Let $ \kappa > 0 $ be chosen such that $ \kappa < \min\{ \epsilon, \delta \}$.
Suppose $\sigma\in U $.  Let $ X = X ( \sigma ) $ be an isometric embedding of $ (\Sigma, \sigma)$.
Since $ \kappa < \epsilon$,
we may assume that $ X (\sigma) $ is chosen such that
$ || X (\sigma ) ||_{C^{k, \alpha} } \le C $,
where $ C $ depends only on $ \sigma_0 $ and $ \epsilon$.
Given any $\eta\in \mS^{k,\alpha}$ such that $||\eta||_{C^{k,\alpha} }=1$,
consider $ \sigma + t \eta \in \M^{k, \alpha}_+$ for $ | t |< \epsilon$.
 Let
 $ X(\sigma+t\eta)=X(\sigma)+ Y (\sigma, \sigma + t\eta) $
 be the (nearby) isometric embeddings of  $(\Sigma, \sigma+t\eta )$.
 In what follows,  we  write $ P = Y ( \sigma, \sigma + t \eta )$.
 By \eqref{eq-isoembedding}, $P $ satisfies
 \be \label{eq-pdeofp}
 2 d X \cdot d P = t \eta - ( d P )^2.
 \ee
Since $ \kappa < \delta $ and $ | t | < \epsilon $,
by Lemma \ref{lma-ap-5} we have
    \be \label{P-e1}
    || P  ||_{C^{k,\alpha} }=||X(\sigma+t\eta)-X(\sigma)||_{C^{k,\alpha }} \le
    C  ||t\eta||_{C^{k,\alpha }} \le C|t|
    \ee
     where $C >0$ is the constant  in Lemma \ref{lma-ap-5}. In particular, $ C $
     is independent on $ \eta$.


Now let $ Y = \Phi (\sigma, \eta)$ be the solution to
    \be\label{linearlized-e1}
    2dX \cdot dY=\eta .
  \ee
By \eqref{eq-pdeofp} and \eqref{linearlized-e1}, we have
    \be
    2dX\cdot (dP-tdY)=  -( dP )^2 :=\rho .
    \ee
 Since $ P $ is of $ C^{k, \alpha}$, we know
$ \rho \in \mS^{k-1, \alpha}$. By \eqref{P-e1},
\be \label{rho-e1}
 || \rho ||_{ C^{k-1, \alpha} } \le C t^2 .
 \ee
We claim that $ P - t Y  = \Phi (\sigma, \rho)$.
    To see this, we first recall that
    $ P = Y ( \sigma, \sigma + t \eta) = \lim_{m \rightarrow \infty} Y_m  $
    in the $ C^{2, \alpha}$ norm,   where $ Y_0 = 0 $,
    $ Y_m = \Phi( \sigma, \rho_{m-1} ) $ and
    $ \rho_{m-1} = t \eta - ( d Y_{m-1} )^2  $.
Next, let $ \phi_m $ be the corresponding unique solution $ \phi $
of \eqref{phi-e1} with $ \rho $ replaced by $ \rho_{m-1} $.
By \eqref{phiandPhi}, $ \phi_m $ satisfies
\be
\phi_m ( u, v ) = \frac{1}{\Delta} \lf[ X_v \cdot (Y_m)_u - X_u \cdot (Y_m)_v \ri].
\ee
Let $ \phi_P$ be given by
\be
\phi_P ( u, v ) = \frac{1}{\Delta} \lf( X_v \cdot P_u - X_u \cdot P_v \ri) .
\ee
Since $ Y_m $ converges to $ P $ in the $C^{2, \alpha}$ norm,
we see that $ \phi_m $ converges to $ \phi_P $ in the $ C^{1, \alpha} $ norm.
In particular, $ \phi_P $ is $L^2$-orthogonal to the coordinate functions of $X_3$.
On the other hand,  by (6.15) in \cite{Nirenberg}, $ \phi_P $ is a solution to
\eqref{phi-e1} with $ \rho $ replaced by $ \tilde{\rho} = t \eta - ( d P )^2 $.
Hence, by definition, we have
$ \phi_P = \Psi( \sigma, \tilde{\rho} )$. Since $ P $ also vanishes at the fixed point
where $ Y_m $ is set to vanish, we know that $ P $ is obtained by integrating $ P_u $ and $ P_v $, which are given by  \eqref{Y-e1} and \eqref{p-e1} with $ \rho $ replaced by
$ \tilde{\rho} $ and with $ \phi $ replaced by $ \phi_P = \Psi( \sigma, \tilde{\rho} )$.
By definition, this shows $ P = \Phi( \sigma, \tilde{\rho} )$. Therefore, we have
\be
P - t Y = \Phi( \sigma, \tilde{\rho} ) -  t \Phi( \sigma, \eta) =
\Phi ( \sigma, \rho) .
\ee
By Lemma \ref{phi-l0} and \eqref{rho-e1}, we then have
    \be
    ||P-tY||_{C^{k-1,\alpha}}\le Ct^2
    \ee
    or equivalently
   \be \label{xzest}
   || X (\sigma + t \eta) - Z (t) ||_{C^{k-1,\alpha}}\le Ct^2
   \ee
   where $ Z ( t ) = X (\sigma) + t Y $.

   Next,  applying the fact that the second fundamental form $ \Pi ( Z )$ of any
   $ Z \in \E^{m, \alpha} $ ($m \ge2$), written in local coordinates,
   are polynomial functions of derivatives of $ Z $ of order at most $2$,
we  see from \eqref{xzest} and the fact
   $ || X (\sigma + t \eta) ||_{C^{k,\alpha}} \le C$    that
  \be \label{xztest-2}
   || \Pi ( X (\sigma + t \eta) ) - \Pi ( Z (t) ) ||_{C^{k-3, \alpha} } \le C t^2 .
   \ee
   On the other hand, because the map $ \rho \mapsto \Phi (\sigma, \rho) $ is
    linear from $ \Sp^{k, \alpha} $ to $ \X^{k-1, \alpha}$,
   and because $ || \Phi (\sigma, \eta) ||_{C^{k-1, \alpha}} \le C $,
    there is  a linear map $A:  \Sp^{k,\alpha}
    \to \Sp^{k-3,\alpha}$ such that
    \be \label{xztest-3}
   || \Pi(Z(t))-\Pi(X(\s) ) - t A(  \eta)||_{C^{k-3,\alpha}}\le C t^2.
   \ee
 By \eqref{xztest-2} and \eqref{xztest-3}, we have
  \be
   ||\mathbb{ II}(X(\sigma+t\eta))-\mathbb{ II}(X(\sigma))- t A(\eta)||_{C^{k-3,\alpha}}\le Ct^2
    \ee
    for all $\eta\in \Sp^{k,\alpha}$ with $ || \eta ||_{C^{k, \alpha} } = 1 $.

We want to compute $A(\eta)$ explicitly, which is simply $ \frac{d}{dt}|_{t=0} \Pi ( Z (t) ) $. Since $ A (\eta) $  also depends on $\sigma$, we will denote it by $A^{(\s)}(\eta)$. Let $e_3(t)=\frac{Z_1(t)\wedge Z_2(t)}{|Z_1(t)\wedge Z_2(t)|}$ be the unit normal of
$Z(t)$, where $Z_1=\frac{\p Z}{\p u}$ and $Z_2=\frac{\p Z}{\p v}$. Let $ i, j \in \{ 1, 2\}$ and let $ Z_{ij} $ denote the corresponding second order derivative of $ Z$. Then
   $$
   \mathbb{ II}(Z(t))_{ij}=-\la e_3, Z_{ij}\ra.
   $$
   Hence
   $$
   A(\eta)_{ij}=-\la e_3(0), Y_{ij}\ra-\la \frac{d e_3}{dt}, Z_{ij}\ra|_{t=0}.
   $$
Since $ \frac{d e_3}{dt}  \perp e_3 $, we may assume
$ \frac{d e_3}{dt} |_{ t = 0 } = c^i  X_i  $
for some coefficients $ c^i$.
Then
\bee
- \la e_3, \frac{d Z_j}{dt}  |_{ t = 0 } \ra =
\la \frac{d e_3}{dt}  |_{ t = 0 } ,  X_j \ra = c^i \sigma_{ij}.
\eee
 Thus,
\bee
c^i =  -   \sigma^{ij} \la e_3, Y_j \ra.
\eee
Therefore
 \be\label{C1-e1}
 A^{(\s)}(\eta)_{ij}=-\la X_3(\sigma), Y_{ij}\ra+\sigma^{kl} \la X_3(\s), Y_k \ra
 \la X_l, X_{ij} \ra
 \ee
 where $X_3(\s)$ is the unit normal of $X(\s)$.

Using the facts that $||Y||_{k-1,\alpha}\le C$ (Lemma \ref{phi-l0}) and $||X(\s)||_{k,\alpha}\le C$, where both constants $C$ depend only on $\sigma_0$,  we conclude from \eqref{C1-e1} that $A^{(\s)}$ is a bounded linear map from $\Sp^{k,\alpha}$ to $\Sp^{k-3,\alpha}$.

Next we want to prove that the map $ \sigma \mapsto A^{(\s)}$ is continuous  in the operator topology. Namely, for $\s_1\in U$,  we want to prove that
 \be\label{C1-e2}
 \lim_{\sigma\in U,\sigma\to\sigma_1}\sup_{\eta\in \Sp^{k,\alpha},||\eta||_{C^{k,\alpha}}=1} ||A^{(\s)}(\eta)-A^{(\s_1)}(\eta)||_{C^{k-3,\alpha}}=0.
 \ee
We first note that $A^{(\s)}$ does not depend on any particular choice of the embedding $ X(\sigma) $.
Suppose $\s_1\in U$ and suppose $ X(\sigma_1 )$ is a fixed isometric embedding
of $\sigma_1$ such that $ || X (\sigma_1) ||_{C^{k, \alpha}} \le C$.
 By Lemma \ref{lma-ap-5}, for any $ \sigma \in \M^{k, \alpha}_+$
 with  $||\s-\s_1||_{C^{k,\alpha}} < \e - \kappa$,  an isometric embedding $X(\s)$ can be chosen such that
 $ X(\s) = X(\s_1)+ P_1 $, where $ P_1 = Y (\sigma_1, \sigma) $ and
\be\label{C1-e3}
||P_1||_{C^{k,\alpha}}\le C||\s-\s_1||_{C^{k,\alpha}} .
 \ee
Here and below all the constants $C$ depend only on  $\s_0$, but not on
$ \sigma $ and $ \eta$.

For any given $\eta \in \Sp^{k,\alpha}$ with $ || \eta ||_{C^{k, \alpha} } =1$,
let $Y^{(1)} = \Phi (\sigma_1, \eta)$ and $Y = \Phi (\sigma, \eta) $ be the solutions of
$$
2dX(\s_1)\cdot dY^{(1)}=\eta
$$
and
$$
2dX (\s)\cdot dY=\eta.
$$
In order to prove \eqref{C1-e2}, by \eqref{C1-e1} and \eqref{C1-e3}, it is  sufficient to prove that
\be\label{C1-e4}
||Y^{(1)}-Y||_{C^{k-1,\alpha}}\le C||\s-\s_1||_{C^{k,\alpha}} .
\ee

Let $\phi^{(1)} = \Psi(\sigma_1, \eta) $ and $\phi = \Psi( \sigma, \eta) $ be the
functions that are used to construct $ Y^{(1)} $ and $ Y $.
Then $\phi^{(1)}$ and $\phi$ satisfy two elliptic PDEs
$$
a_{ij}^{(1)}\phi_{ij}^{(1)}+b_i^{(1)}\phi_i^{(1)}+c^{(1)}\phi^{(1)}=f^{(1)}
$$
and
$$
a_{ij}\phi_{ij} +b_i \phi_i+c \phi =f ,
$$
which correspond to \eqref{phi-e1} (where the metric and the embedding involved are given by $ \sigma_1$ and $ X(\sigma_1)$, $ \sigma$ and $ X(\sigma)$ respectively,
and $ \rho $ is replaced by $ \eta$).
By \eqref{L-e1}-\eqref{T-e1}, \eqref{C1-e3} and the fact $ || X(\sigma_1) ||_{C^{k, \alpha}} \le C$,  we have
\be \label{C1-e50}
||a_{ij}^{(1)} ||_{C^{k-2,\alpha}}
+||b^{(1)}_i ||_{C^{k-3,\alpha}}
+||c^{(1)} ||_{C^{k-2,\alpha}}  \le  C
\ee
and
\be \label{C1-e5}
\begin{split}
\ & ||a_{ij}^{(1)}-a_{ij}||_{C^{k-2,\alpha}}
+||b^{(1)}_i-b_i||_{C^{k-3,\alpha}}
+||c^{(1)}-c||_{C^{k-2,\alpha}}+||f^{(1)}-f||_{C^{k-3,\alpha}}\\
& \le  C ||\s_1-\s||_{C^{k,\alpha}} .
\end{split}
\ee
Hence
\be \label{constpde}
a_{ij}^{(1)} (\phi_{ij}^{(1)}-\phi_{ij}  )+b_i^{(1)}(\phi_i^{(1)}-\phi_i  )+c^{(1)}(  \phi^{(1)}-\phi)=q
\ee
where $q=f^{(1)}-f +(  a_{ij}-a_{ij}^{(1)}) \phi_{ij}+(  b_i-b_i^{(1)}) \phi_i+(  c-c^{(1)})\phi$. By  \eqref{C1-e5} and Lemma \ref{phi-l0}, we have
\be\label{C1-e6}
||q||_{k-3,\alpha}\le C||\s_1-\s||_{C^{k,\alpha}}.
\ee
It follows from \eqref{C1-e50}, \eqref{constpde}, \eqref{C1-e6} and the Schauder estimates that
\be\label{C1-e7}
||\phi^{(1)}-\phi||_{C^{k-1,\alpha}}\le C\lf(||\phi^{(1)}-\phi||_{C^0}+||\s_1-\s||_{C^{k,\alpha}}  \ri).
\ee
To estimate $ || \phi^{(1)}-\phi||_{C^0} $, let $x_1, x_2, x_3$ be coordinate functions of the unit normal of $X(\s_1)$ and let $y_1$, $y_2$, $y_3$ be the unit normal of $X(\s)$.  Define
$$\beta_i=\int_{\Sigma}x_i (\phi^{(1)}-\phi)d\s_1, \ \
\omega_{ij}=\int_{\Sigma}x_ix_j d\s_1.
$$
 Since
$$
\int_{\Sigma}x_i  \phi^{(1)} d\s_1=\int_{\Sigma}y_i  \phi d\s=0 ,
$$
we have
\be \label{lowerbeta}
|\beta_i|\le C ||\s_1-\s||_{C^{k,\alpha}}
\ee
where we have also used  \eqref{C1-e3} and Lemma \ref{phi-0-l1}.
Since $(\omega_{ij})$ has an inverse $(\omega^{ij})$, we
let $\beta^i=\omega^{ij}\beta_j$. Then
$$\phi^{(1)}-\phi-\sum_{k}\beta^k x_k$$
is $L^2$-orthogonal to each $x_i$.
Moreover,
\be \label{upbeta}
|\beta^i|\le C||\s_1-\s||_{C^{k,\alpha}}
\ee
and
$$
a_{ij}^{(1)} (\phi^{(1)}-\phi -\sum_{k}\beta^kx_k  )_{ij}+b_i^{(1)}(\phi^{(1)}-\phi -\sum_{k}\beta^kx_k  )_i+c^{(1)}(  \phi^{(1)}-\phi-\sum_{k}\beta^kx_k)=q_1
$$
where  $ q_1 $ is some function satisfying
$||q_1||_{C^{k-3,\alpha}}\le C||\s_1-\s||_{C^{k,\alpha}}$.
By the integral expression of $\phi_{ij}^{(1)}-\phi_{ij}-\sum_{k}\beta^kx_k$ in terms of the Green's function, see \cite{Nirenberg}, we have
\be
||\phi^{(1)}-\phi -\sum_{k}\beta^kx_k||_{C^0}\le C ||\s_1-\s||_{C^{k,\alpha}},
\ee
 and therefore
 \be \label{czeroest}
 ||\phi^{(1)}-\phi||_{C^0}\le C  ||\s_1-\s||_{C^{k,\alpha}}
 \ee
 by \eqref{upbeta}.
 It follows from  \eqref{C1-e7} and \eqref{czeroest} that
 \be\label{C1-e8}
||\phi^{(1)}-\phi||_{C^{k-1,\alpha}}\le C ||\s_1-\s||_{C^{k,\alpha}}.
\ee
Finally, because $ Y^{(1)} $ and $ Y $ are obtained by integrating
$ (Y^{(1)})_u,  (Y^{(1)})_v$ and  $Y_u , Y_v$
which are determined by \eqref{Y-e1} with the corresponding
$ \phi^{(1)}$ and $ \phi $ inserted,
we conclude from \eqref{C1-e8}
that \eqref{C1-e4} is true, hence the map $ \s \mapsto A^{(\s)}$ is continuous
in the operator topology.    \end{proof}

\section{Existence of critical points on  nearby surfaces} \label{smallsolution}

We are now in a position to apply Theorem \ref{C1-t1} and
the {IFT} to study {\bf Question 2}.
 Let $\Sigma$, $ N $ be given as in the Introduction, namely,
$\Sigma$ is a smoothly embedded, closed, spacelike two-surface, which is topologically a two-sphere,
in a smooth time-oriented spacetime $N$. Suppose the mean curvature vector $ H $ of $ \Sigma $ in $ N $
is spacelike. Let $ \sigma $ be the induced metric on $ \Sigma $ from $ N $.
Suppose $ \tau_0 $ is a $ C^{k+1, \alpha} $ function on $ \Sigma $ with $ k \ge 5  $
such that  $ \sigma + d \tau_0 \otimes d \tau_0 $ has positive Gaussian curvature and $ \tau_0 $
is a solution to  \eqref{1stvariation-e1} on $ \Sigma $.

To describe spacelike two-surfaces which are ``close" to $ \Sigma$, we use the exponential map
$ exp^N( \cdot )$ associated to the Levi-Civita connection of the Lorentzian metric $ g $
on $ N $. Precisely, we first fix a smooth
future timelike normal vector field $ J$  on $ \Sigma $ which is orthogonal to $ H $.
Then $ \{ H, J \}$ form a basis for the normal bundle $ (T \Sigma)^\perp $ of $ \Sigma $.
Let $\mathcal{B}=C^{k,\alpha}(\Sigma)\times C^{k,\alpha}(\Sigma)$,
where $C^{k,\alpha} (\Sigma ) $ is the Banach space of $C^{k,\alpha}$ functions on $\Sigma$. For any constant $ a > 0 $,
let $B(a)$ be the open ball in $\mathcal{B}$ centered at $(0,0)$ with radius $a$.
If $ a $ is sufficiently small, for any $ f = ( f_1, f_2) \in B (a) $,  the map
$ F_f:  \Sigma \rightarrow N $
defined by $ F_f (x) = \exp^N (f_1(x) H (x)+f_2(x) J (x))$ is
a $ C^{k, \alpha} $ embedding,  moreover $ F_f ( \Sigma ) $ remains to be spacelike and
has spacelike mean curvature vector $ H_f$.

Consider the  map
$ \mathcal{I}: B(a)\to \mathcal{M}^{k-1,\alpha} (\Sigma) $
given by $\mathcal{I}(f)=F_f^*(g)$, where $ \mathcal{M}^{k-1, \alpha} (\Sigma) $ denotes the space of $ C^{k -1, \alpha} $ Riemannian metrics on $ \Sigma$.
Let $ U_{\tau_0} ( a) $ be the open ball in $ C^{k, \alpha} (\Sigma) $ centered at $ \tau_0 $ with
radius $ a $. For $ a $ sufficiently small, we may also assume that
$ \mathcal{I}(f)  + d \tau \otimes d \tau $
is a metric of positive Gaussian curvature for all
 $ f \in B(a) $ and $ \tau \in U_{\tau_0} (a) $.
Given such a small $ a $, we define the map
$$
\mH :  B(a)\times U_{\tau_0}(a) \longrightarrow C^{k-4, \alpha} ( \Sigma)
$$
where
\bee \label{mH-def}
\begin{split}
 \ & \mH( f, \tau )  \\
 =  & - \lf[ \hat{H}\hat{\sigma}^{ab} - \hat{\sigma}^{ac} \hat{\sigma}^{bd} (\hat{h}_{cd})  \ri]
\frac{ \nabla_b \nabla_a \tau }{ \sqrt{ 1 + | \nabla \tau |^2 } }
+ \div_{\Sigma } \lf[ \frac{ \nabla  \tau}{\sqrt{ 1 + | \nabla  \tau |^2} } \cosh \theta |   H_f | - \nabla  \theta - V_f \ri]
\end{split}
\eee
which is just the left side of \eqref{1stvariation-e1} but with $ \sigma $ replaced by $ ^f\sigma =  \mathcal{I}(f) $,
$ H $ replaced by $ H_f $
and $ V $ replaced by $ V_f$.
Here the vector field $ V_f $ on $ \Sigma $ is understood as the pull back,  through the embedding $ F_f $, of the vector field  dual to
the one form $ \alpha^N_{e^{H}_3} ( \cdot) $ on $ F_f (\Sigma) $.

\begin{prop}\label{mH-smooth-p1} $\mH$ is a $C^1$ map.
\end{prop}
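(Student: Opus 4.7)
The plan is to write $\mathcal{H}$ as a composition of maps, each of which is $C^1$ between appropriate H\"older spaces, and then conclude by the chain rule. The only nontrivial ingredient will be the $C^1$-dependence of the second fundamental form of the isometric embedding of $(\Sigma, \hat\sigma)$ in $\R^3$ on the metric $\hat\sigma$, which was precisely the content of Theorem \ref{C1-t1}; everything else reduces to routine smoothness of algebraic and differential operations in H\"older spaces, together with careful book-keeping of regularities.

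For the set-up I would first observe that $F_f$ is a $C^{k,\alpha}$ embedding depending smoothly on $f\in B(a)$, so the induced metric ${}^f\sigma=\mathcal{I}(f)$ lies in $C^{k-1,\alpha}$, its Christoffel symbols in $C^{k-2,\alpha}$, and both the mean curvature $|H_f|$ and the normal connection one-form giving $V_f$ are $C^{k-2,\alpha}$ and depend smoothly on $f$; all these assignments are $C^\infty$ as maps between the relevant Banach spaces because they are polynomial/rational expressions in derivatives of $F_f$ with non-vanishing denominators in a neighborhood of $(0,0)$. The perturbed metric $\hat\sigma={}^f\sigma+d\tau\otimes d\tau$ is then a smooth function of $(f,\tau)$ taking values in $\mathcal{M}_+^{k-1,\alpha}$ on $B(a)\times U_{\tau_0}(a)$, for $a$ small. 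Since $\sigma$ is smooth on $\Sigma$ and $\tau_0\in C^{k+1,\alpha}$, the base point $\hat\sigma_0=\sigma+d\tau_0\otimes d\tau_0$ lies in $\mathcal{M}_+^{k,\alpha}$. Because $k\ge 5$, so that $k-1\ge 4$, Theorem \ref{C1-t1} applies with the role of ``$k$'' there played by $k-1$, giving that
\[
\mathcal{F}:\mathcal{M}_+^{k-1,\alpha}\longrightarrow \mathcal{S}^{k-4,\alpha}
\]
is $C^1$ on a neighborhood of $\hat\sigma_0$. Composing with $(f,\tau)\mapsto \hat\sigma$ shows that $(f,\tau)\mapsto \hat h_{cd}$ is $C^1$ into $\mathcal{S}^{k-4,\alpha}$, and consequently $\hat H=\hat\sigma^{ab}\hat h_{ab}$ is $C^1$ into $C^{k-4,\alpha}(\Sigma)$.

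With the isometric embedding data handled, the remaining analysis is mechanical. The covariant Hessian $\nabla_a\nabla_b\tau$ of $(\Sigma,{}^f\sigma)$ is $C^{k-2,\alpha}$ and depends smoothly on $(f,\tau)$; the function $\theta$ is determined implicitly by $\sinh\theta=-\Delta\tau/(|H_f|\sqrt{1+|\nabla\tau|^2})$ with $|H_f|$ bounded away from zero under small perturbations, hence is a smooth function of $(f,\tau)$ into $C^{k-3,\alpha}$, and so $\nabla\theta$ is smooth into $C^{k-4,\alpha}$. Products, index raising with $\hat\sigma^{ab}$, division by non-vanishing denominators such as $\sqrt{1+|\nabla\tau|^2}$ or $|H_f|$, and the final outer divergence on $(\Sigma,{}^f\sigma)$ are each smooth between the corresponding H\"older spaces. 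Assembling all of these via the chain rule yields that $\mathcal{H}$ is a $C^1$ map from $B(a)\times U_{\tau_0}(a)$ into $C^{k-4,\alpha}(\Sigma)$, as claimed. The principal obstacle, namely the differentiability of $\mathcal{F}$ in the metric, has already been overcome in Section \ref{secondfform}; the remaining difficulty is only to track derivative counts, and the assumption $k\ge 5$ is exactly what is needed so that the two-derivative loss in passing from $\hat\sigma$ to $\hat h$ via $\mathcal{F}$ combined with the outer divergence still lands the output in $C^{k-4,\alpha}$.
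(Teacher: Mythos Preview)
Your proof is correct and follows essentially the same approach as the paper's own proof: both reduce the question to the $C^1$-dependence of $\hat h_{cd}$ on $\hat\sigma$ via Theorem \ref{C1-t1} (applied with $k-1$ in place of $k$, landing in $\mathcal{S}^{k-4,\alpha}$), and treat the remaining terms as routine smooth compositions in H\"older spaces. Your write-up is somewhat more detailed in tracking the regularity of $\theta$ and in making explicit the hypothesis $\hat\sigma_0\in\mathcal{M}_+^{k,\alpha}$ needed for Theorem \ref{C1-t1}, while the paper singles out $f\mapsto\div_\Sigma V_f$ for a separate explicit check; but the strategies coincide.
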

\begin{proof}
Note that $\mathcal{I}$ is a $C^1$ map. Hence, the map
$ (f, \tau)  \mapsto \hat{\sigma} = ^f\sigma + d \tau \otimes d \tau $
is $ C^1 $ from $ B(a)\times U_{\tau_0}(a) $ to $ \mathcal{M}^{k-1,\alpha} (\Sigma) $.
By Theorem \ref{C1-t1},   the map
$(f, \tau) \mapsto (\hat{h}_{cd}) $
is $ C^1 $ from $ B(a)\times U_{\tau_0}(a) $ to the space of $ C^{k-4, \alpha} $ symmetric $(0,2)$ tensors
on $ \Sigma $. Thus, to show $ \mH $ is $ C^1 $, it only remains to check that the map
$f \mapsto \div_{\Sigma} V_f$ is $C^1$ from  $B(a)\times U_{\tau_0}(a)$ to $C^{k-4,\alpha} (\Sigma) $.

Let $T$ be a smooth future timelike unit vector field on $N$.
Let $ \{ (x^1, x^2) \}$ be any local coordinates on $ \Sigma $. Let $ v_b = (F_f)_*( \frac{\p }{\p x_a } ) $, $ b = 1, 2 $. Then
$$ H_f = (^f\sigma)^{ab} \nabla^N_{v_a} v_b  -
( ^f\sigma )^{cd} \la (^f\sigma)^{ab} \nabla^N_{v_a} v_b, v_c \ra  v_d ,
$$
$$ V_f = (^f \sigma)^{ab} \la \nabla^N_{v_a} e^{H_f}_3, e^{H_f}_4 \ra v_b ,$$
where $ e^{H_f}_3 = - H_f / | H_f | $ and
$ e^{H_f}_4 =   w /  \sqrt{-\la w,w\ra} $
with
$$  w= T -  ( ^f\sigma)^{ab} \la T, v_a \ra v_b  -  \la T,e_3^{H_f} \ra e_3^{H_f} . $$
From this it is easily seen that $f \mapsto \div_{\Sigma} V_f$ is a $C^1$ map.
\end{proof}

\begin{lma}\label{mH-l1}
Let $ d v_{^f\sigma} $, $ d v_{\sigma} $ be the volume form of  $ ^f\sigma $, $ \sigma $ on $ \Sigma $.
Then,  for any $(f,\tau)\in B(a)\times U_{\tau_0}(a)$,
$$
\int_{\Sigma} \mH(f,\tau) dv_{^f\sigma}=0 .
$$
\end{lma}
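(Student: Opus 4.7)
The plan is to recognize $\mH(f,\tau)$ as the Euler--Lagrange functional of $\Ewy$ at the surface $F_f(\Sigma)$ and exploit the invariance of $\Ewy$ under constant shifts of the time function $\tau$. Specifically, by construction and by formula \eqref{1stvar-wy} applied to the surface $F_f(\Sigma)$ (with induced metric $^f\sigma$, mean curvature $H_f$, and connection one-form giving rise to $V_f$), for any smooth function $\delta\tau$ on $\Sigma$ we have
$$
\delta \Ewy(F_f(\Sigma), \tau)(\delta\tau) \;=\; \frac{1}{8\pi} \int_{\Sigma} \mH(f,\tau)\,\delta\tau \; dv_{^f\sigma}.
$$

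The next step is to observe that $\Ewy(F_f(\Sigma), \tau)$ is invariant under the replacement $\tau \mapsto \tau + c$ for any constant $c$. Indeed, adding a constant to $\tau$ corresponds to translating the isometric embedding $X = (\hat X, \tau) \hookrightarrow \R^{3,1}$ vertically in the time direction, so $\hat\sigma = {}^f\sigma + d\tau \otimes d\tau$, the projected embedding $\hat X$, and hence $\hat H$, $dv_{\hat\Sigma}$ are all unchanged; likewise $|\nabla \tau|$, $\Delta \tau$, $\theta$, $\nabla \theta$, and the fields $H_f$, $V_f$ do not involve $\tau$ itself but only its derivatives and the geometry of $F_f(\Sigma)$. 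Consequently every term in the integrand defining $\Ewy(F_f(\Sigma), \tau)$ is unchanged, so $\Ewy$ is invariant under constant shifts of $\tau$.

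Taking $\delta\tau \equiv 1$, the invariance yields
$$
0 \;=\; \left.\frac{d}{ds}\right|_{s=0} \Ewy(F_f(\Sigma), \tau + s) \;=\; \frac{1}{8\pi} \int_{\Sigma} \mH(f,\tau) \; dv_{^f\sigma},
$$
which is exactly the claim. The only subtlety is that the formula \eqref{1stvar-wy} was stated for $\Sigma$, but the identical derivation (Wang--Yau \cite{WangYau08}) applies verbatim to the perturbed spacelike surface $F_f(\Sigma)$ whose induced metric is $^f\sigma$ and whose mean curvature vector $H_f$ remains spacelike for $a$ sufficiently small; no obstacle arises here. Thus the lemma follows directly from the first-variation formula together with the translation invariance of the Wang--Yau energy in the $\tau$-variable.
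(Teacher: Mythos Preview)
Your argument is correct and takes a genuinely different route from the paper. The paper proceeds by direct computation: since the second term in $\mH(f,\tau)$ is a divergence on $(\Sigma,{}^f\sigma)$, only the first term needs to be checked. The paper then rewrites that term using the identities $\sqrt{1+|\nabla\tau|^2}\,dv_{{}^f\sigma}=dv_{\hat\sigma}$ and $\hat\nabla_a\hat\nabla_b\tau=(1+|\nabla\tau|^2)^{-1}\nabla_a\nabla_b\tau$, obtaining $\int_\Sigma(\hat H\hat\sigma^{ab}-\hat\sigma^{ac}\hat\sigma^{bd}\hat h_{cd})\hat\nabla_a\hat\nabla_b\tau\,dv_{\hat\sigma}$, which vanishes after integration by parts because $\hat H\hat\sigma-\hat h$ is divergence-free on $(\Sigma,\hat\sigma)$ by the Codazzi equation.

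Your approach instead recognizes $\mH(f,\tau)$ as the Euler--Lagrange density of $\Ewy$ and invokes the invariance of $\Ewy$ under constant shifts $\tau\mapsto\tau+c$ to conclude that the variation in the constant direction vanishes. This is conceptually cleaner and explains \emph{why} the integral must vanish, namely as a Noether-type identity coming from a symmetry of the functional. The paper's computation, on the other hand, is self-contained and does not appeal to the variational formula \eqref{1stvar-wy}; it also makes explicit the geometric mechanism (the Codazzi identity for $\hat\Sigma\subset\R^3$) behind the cancellation. Both arguments are short and valid.
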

\begin{proof}
It suffices to verify
\be \label{divergezero}
 \int_{\Sigma}\lf[ \hat{H}\hat{\sigma}^{ab} - \hat{\sigma}^{ac} \hat{\sigma}^{bd} (\hat{h}_{cd})  \ri]\frac{ \nabla_b \nabla_a \tau }{ \sqrt{ 1 + | \nabla \tau |^2 } }dv_{ ^f\sigma} = 0 .
 \ee
Let $ d v_{\hat{\sigma}} $ be the volume of  $\hat \sigma= ^f\sigma+d\tau\otimes d\tau$. Then
$$
\sqrt{1+|\nabla \tau|^2}dv_{\hs}= dv_{ ^f\sigma}
$$
and
$$
\hat\nabla_b\hat\nabla_a \tau=\frac{1}{1+|\nabla \tau|^2}\nabla_b\nabla_a\tau,
$$
where $ \nabla$, $ \hat{\nabla} $ denote covariant derivatives of $ ^f\sigma$, $ \hat{\sigma}$
respectively.
Hence
\bee
\lf[ \hat{H}\hat{\sigma}^{ab} - \hat{\sigma}^{ac} \hat{\sigma}^{bd} (\hat{h}_{cd})  \ri]\frac{ \nabla_b \nabla_a \tau }{ \sqrt{ 1 + | \nabla \tau |^2 } }dv_{ ^f \sigma}=\lf[ \hat{H}\hat{\sigma}^{ab} - \hat{\sigma}^{ac} \hat{\sigma}^{bd} (\hat{h}_{cd})  \ri]\hat\nabla_b \hat\nabla_a \tau  dv_{\hs}
\eee
which implies \eqref{divergezero}
 because $\hat{H}\hat{\sigma}^{ab} - \hat{\sigma}^{ac} \hat{\sigma}^{bd}  \hat{h}_{cd}$ is divergence free with respect to $\hs$.
\end{proof}

In what follows, we assume that $ \tau_0 = 0 $ is a solution to \eqref{1stvariation-e1} on $ \Sigma$.
We give a sufficient condition that guarantees
the existence of solutions to \eqref{1stvariation-e1} on the nearby surfaces $ F_f (\Sigma )$.

\begin{thm}\label{criticalptsexist-t1}
With the above assumptions and notations, suppose  the induced metric $ \sigma $ on $ \Sigma $ has positive Gaussian curvature and the vector field $ V $ on $ \Sigma$ satisfies
$ \div_\Sigma V = 0 $.
Suppose in addition there exists a constant $C>0$ such that
\be\label{assumption-e1}
 \int_\Sigma \lf[ \frac{ ( \Delta \eta )^2 }{ |  H |  } + ( H_0 - |   H | ) | \nabla \eta  |^2 - \Pi_0 ( \nabla \eta ,\nabla \eta)\ri]dv_{\Sigma}\ge C\int_\Sigma (\Delta\eta)^2dv_{\Sigma}
\ee
for all $\eta\in W^{2,2}(\Sigma)$, where $H_0$ and $\Pi_0$ are the mean curvature and the second fundamental form of $(\Sigma,\sigma)$ when isometrically embedded in $\R^3$.
Then for any $ k \ge 5 $ and $ 0 < \alpha < 1$, there exists a small constant $a>0$ such that, for any $f \in B(a)$,  there exists  a $ C^{k, \alpha} $ solution $ \tau $ to \eqref{1stvariation-e1} on the $ C^{k,\alpha}$ embedded surface  $F_f (\Sigma)$.
\end{thm}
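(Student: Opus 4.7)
The plan is to apply the Implicit Function Theorem to the $C^1$ map $\mH$ defined just before the theorem, using the linearization from Proposition \ref{2ndvar-l1} and the integral identity of Lemma \ref{mH-l1}. Since $\tau_0=0$ solves \eqref{1stvariation-e1} by hypothesis, $\mH(0,0)=0$. The same calculation that produced \eqref{1stvar-htau} shows that $L:=D_\tau \mH(0,0)$ is the fourth-order operator
\be
L(\eta) = -\la H_0\sigma-\Pi_0,\nabla^2\eta\ra + \div_\Sigma(|H|\nabla\eta) + \Delta\!\lf(\frac{\Delta\eta}{|H|}\ri),
\ee
which is elliptic (with principal symbol proportional to $|\xi|^4/|H|$) and formally self-adjoint, since the Codazzi identity in $\R^3$ makes $H_0\sigma-\Pi_0$ divergence free on $(\Sigma,\sigma)$. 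Clearly $L(1)=0$; conversely, integration by parts gives
\be
\int_\Sigma \eta\, L(\eta)\, dv_\Sigma = \int_\Sigma\lf[\frac{(\Delta\eta)^2}{|H|}+(H_0-|H|)|\nabla\eta|^2-\Pi_0(\nabla\eta,\nabla\eta)\ri]dv_\Sigma,
\ee
so hypothesis \eqref{assumption-e1} forces $\Delta\eta\equiv 0$ whenever $L\eta=0$, hence $\ker L=\R$. Standard Fredholm theory for self-adjoint elliptic operators then yields that $L$ restricts to a Banach space isomorphism between the fixed zero-mean subspaces
\bee
L\colon\ \lf\{\eta\in C^{k,\alpha}(\Sigma):\textstyle\int_\Sigma\eta\,dv_\sigma=0\ri\}\ \longrightarrow\ \lf\{\psi\in C^{k-4,\alpha}(\Sigma):\textstyle\int_\Sigma\psi\,dv_\sigma=0\ri\}.
\eee

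The subtlety is that Lemma \ref{mH-l1} only guarantees $\int_\Sigma\mH(f,\tau)\,dv_{^f\sigma}=0$, where the measure depends on $f$, rather than the fixed-measure constraint in the codomain of the isomorphism above. To remove this $f$-dependence, introduce the modified map
\be
\widetilde{\mH}(f,\tau)=\mH(f,\tau)-\frac{1}{\mathrm{Vol}(\Sigma,\sigma)}\int_\Sigma \mH(f,\tau)\,dv_\sigma,
\ee
viewed as a $C^1$ map from $B(a)\times\lf\{\tau\in U_{\tau_0}(a):\int_\Sigma\tau\,dv_\sigma=0\ri\}$ into the fixed subspace $\lf\{\psi\in C^{k-4,\alpha}(\Sigma):\int_\Sigma\psi\,dv_\sigma=0\ri\}$. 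A zero of $\widetilde{\mH}$ forces $\mH(f,\tau)$ to be a constant, and that constant must vanish by Lemma \ref{mH-l1}; thus $\widetilde{\mH}(f,\tau)=0$ is equivalent to $\mH(f,\tau)=0$. The partial derivative $D_\tau\widetilde{\mH}(0,0)$ agrees with $L$ on the zero-mean slice, because self-adjointness together with $L(1)=0$ give $\int_\Sigma L\eta\,dv_\sigma=\int_\Sigma \eta\, L(1)\,dv_\sigma=0$, so the correction term contributes nothing along the slice. Hence $D_\tau\widetilde{\mH}(0,0)$ is exactly the isomorphism produced above.

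The Implicit Function Theorem applied to $\widetilde{\mH}$ at $(f,\tau)=(0,0)$ now yields a constant $\e>0$ and a $C^1$ assignment $f\mapsto\tau(f)$ on $\{f\in B(a):\|f\|_{C^{k,\alpha}}<\e\}$ with $\widetilde{\mH}(f,\tau(f))=0$, equivalently $\mH(f,\tau(f))=0$; pushing $\tau(f)$ forward through $F_f$ gives the required $C^{k,\alpha}$ solution to \eqref{1stvariation-e1} on $\widetilde{\Sigma}=F_f(\Sigma)$, and shrinking $a$ if necessary absorbs the threshold $\e$ into the statement. The main obstacle is the isomorphism property of $L$ on the zero-mean subspaces: ellipticity and self-adjointness are routine, but the identification $\ker L=\R$ is exactly where the coercivity hypothesis \eqref{assumption-e1} enters in an essential way (and via Theorem \ref{Mainresult-1} this hypothesis is precisely what the geometric assumptions of Theorem \ref{Mainresult} furnish). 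The regularity lower bound $k\geq 5$ traces back to the three-derivative loss in Theorem \ref{C1-t1}, which is needed to keep the assignment $(f,\tau)\mapsto \hat h_{cd}$ (and hence $\mH$) a $C^1$ map into a H\"older space regular enough for Schauder theory to apply to the linearization $L$.
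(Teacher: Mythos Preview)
Your argument is correct and follows the same overall strategy as the paper: reduce $\mH(f,\tau)=0$ to an equivalent problem between fixed zero-mean subspaces, identify the $\tau$-linearization at $(0,0)$ with the fourth-order self-adjoint elliptic operator $L$, use the coercivity hypothesis \eqref{assumption-e1} to pin down $\ker L=\R$, conclude that $L$ is an isomorphism on the zero-mean slice, and apply the Implicit Function Theorem.

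The two implementations differ only in packaging. First, to convert the $f$-dependent constraint of Lemma \ref{mH-l1} into a fixed one, the paper multiplies by the density and works with $\mH_0(f,\tau)=\frac{dv_{^f\sigma}}{dv_{\sigma}}\,\mH(f,\tau)$, so that $\int_\Sigma \mH_0\,dv_\sigma=\int_\Sigma \mH\,dv_{^f\sigma}=0$ automatically; you instead subtract the $\sigma$-average and then use Lemma \ref{mH-l1} to kill the resulting constant. Both are clean, and since the density equals $1$ at $f=0$ the linearizations coincide. Second, for the isomorphism the paper writes out a Lax--Milgram/variational argument with an explicit regularity bootstrap to $C^{k,\alpha}$, whereas you invoke Fredholm theory for self-adjoint elliptic operators in H\"older spaces; your route is shorter, the paper's is more self-contained. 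Either way the essential input is the same coercivity estimate \eqref{assumption-e1}.
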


\begin{proof}
Since $ \div_\Sigma V  = 0 $, we know $ \tau_0 = 0 $ is a solution to \eqref{1stvariation-e1}
on $ \Sigma$.  For the given $ k $ and $ \alpha$,  let $a>0$ be sufficiently small such that the map $ \mH $ is well defined on $ B(a) \times U_{\tau_0} (a) $ with $ \tau_0 = 0 $. Let
$$D (a) =\{\tau\in C^{k,\alpha} (\Sigma) \ | \  ||\tau||_{C^{k,\alpha}} <a \ \text{\rm and }\int_\Sigma \tau \ dv_{\Sigma}=0\} \subset U_0(a)
$$
and
$$
C^{k-4,\alpha}_0 (\Sigma) =\{\phi\in C^{k-4,\alpha} (\Sigma) \ | \  \int_\Sigma \phi \ dv_{\Sigma}=0\}.
$$
For $(f,\tau)\in B(a)\times D$, define
$$
\mH_0(f,\tau)=\frac{dv_{^f\sigma}}{dv_{\Sigma}}\mH(f,\tau).
$$
By Proposition \ref{mH-smooth-p1} and Lemma \ref{mH-l1}, $\mH_0$ is a $C^1$ map from $B(a)\times D (a) $ to $C^{k-4,\alpha}_0 (\Sigma) $.

Direct computations show that the partial derivative $D_\tau \mH_0 |_{(0,0)}$ of $\mH_0$ at $ (0,0) $ with respect to $ \tau $ is given by
\be\label{partiald-e1}
D_\tau \mH_0|_{(0,0)}(\eta)=-\la H_0\sigma -\mathbb{II}_0,\nabla^2\eta\ra+\div_{\Sigma}\lf(|H|\nabla \eta\ri)+\Delta\lf(\frac{\Delta \eta}{|H|}\ri)
\ee
for $\eta\in  C^{k,\alpha}_0 (\Sigma) $, the space of all $C^{k,\alpha}_0$ functions with zero integral on $(\Sigma, \sigma)$.
Clearly, $D_\tau \mH_0|_{(0,0)}$ is a bounded linear map from $C^{k,\alpha}_0 (\Sigma) $ to $C^{k-4,\alpha}_0 (\Sigma)$. We claim that, under the condition \eqref{assumption-e1},
$D_\tau \mH_0 |_{(0,0)}$ is a bijection. Once this claim can be verified, Theorem \ref{criticalptsexist-t1}  will follow from the implicit function theorem.

To show $D_\tau \mH_0 |_{(0,0)}$ is a bijection, we let $\wt  W^{2,2}(\Sigma)$ be the closed subspace of
$W^{2,2}(\Sigma)$ consisting of those $ \eta $    with $\int_\Sigma \eta =0$.
Here and below, integrations and differentiations are taken with respect to the metric $\sigma $ and we omit witting
the volume form $ d v_\Sigma $ in the integrals.  Consider the following bilinear form on the Hilbert space $\wt  W^{2,2}(\Sigma)$:
\be
\begin{split}
B(\eta,\phi)
= & \ \int_\Sigma D_\tau \mH_0|_{(0,0)} (\eta) \phi \\
= & \
\int_\Sigma     \frac{\Delta \eta \Delta \phi}{ |  H | }
+  (H_0 \sigma - \Pi_0)(\nabla \eta,\nabla\phi)-
  |   H |\la\nabla\eta, \nabla \phi \ra.
 \end{split}
 \ee
  Obvious $B$ is bounded. That is $|B(\eta,\phi)|\le   C_1||\eta||_{W^{2,2}}||\phi||_{W^{2,2}} $ for some constant $C_1$ and for all $\eta, \phi\in  \wt  W^{2,2}(\Sigma)$.

By the $L^p$ estimate \cite[Theorem 9.11]{Gilbarg-Trudinger}, there is a constant $C_2$ such that for all $\eta\in  \wt  W^{2,2}(\Sigma)$
  $$
  ||\eta||_{  W^{2,2}}\le C_2(||\eta||_{L^2}+||\Delta\eta||_{L^2}).
  $$
  Since $\int_\Sigma\eta=0$, by \eqref{gradandlap} we have
  $$
  \int_\Sigma |\Delta\eta|^2\ge \lambda_1^2 \int_\Sigma\eta^2
  $$
  where $\lambda_1>0$ is the first  nonzero  eigenvalue of the Laplacian of $ \sigma$. Hence, by
  \eqref{assumption-e1}, we have
  \be\label{coercive-e1}
  B(\eta,\eta) \ge C_3|| \eta||^2_{W^{2,2}}
 \ee
  for some $C_3>0$ and for all  $\eta\in \wt W^{2,2}(\Sigma)$, i.e. $B$ is coercive. This readily
  implies that $D_\tau \mH_0|_{(0,0)}$ is injective.

  Now let $ f $ be an arbitrary element in $ L^2(\Sigma)$ with $\int_\Sigma f=0$. Define $T: \wt W^{2,2}(\Sigma)\to \R$ by
  $$
  T(\phi)=\int_\Sigma \phi f.
  $$
Since  $T$ is a bounded linear functional on $\wt W^{2,2}(\Sigma)$,
there exists an $\eta\in \wt W^{2,2}(\Sigma)$ such that
  $$
  B(\eta,\phi)=T(\phi)
  $$
  for all $\phi\in \wt W^{2,2}(\Sigma).$ That is to say,
$$
\int_\Sigma     \frac{\Delta \eta \Delta \phi}{ |   H | }
+  (H_0 \sigma - \Pi_0)(\nabla \eta,\nabla\phi)-
  |   H |\la\nabla\eta, \nabla \phi \ra=\int_\Sigma \phi f
  $$
  for all $\phi \in \wt W^{2,2}(\Sigma)$. Integrating by parts, we have

\begin{equation}\label{wsol-e1}
   \int_\Sigma     \frac{\Delta \eta \Delta \phi}{ |  H | }
= \int_\Sigma \lf[\la H_0 \sigma - \Pi_0, \nabla^2 \eta\ra-
  \div_\Sigma(|  H | \nabla\eta)   +f  \ra\ri]\phi
\end{equation}
where we have used the fact that $H_0 \sigma - \Pi_0$ is divergence free with respect to
$ \sigma$. This same fact also implies
$$\int_\Sigma  \la H_0 \sigma - \Pi_0, \nabla^2 \eta\ra-
  \div_\Sigma(|   H | \nabla\eta)   +f =0
  $$
  because $\int_\Sigma f=0$.
  Therefore, if we let
  $$h=\la H_0 \sigma - \Pi_0, \nabla^2 \eta\ra-
  \div_\Sigma(|  H | \nabla\eta)   +f$$
  which is in $L^2(\Sigma)$, then there exists $\psi\in W^{2,2}(\Sigma)$ such that $\Delta \psi=h$. Now we have
  \begin{equation}\label{wsol-e2}
  \int_\Sigma     \frac{\Delta \eta \Delta \phi}{ |   H | }
  =\int_\Sigma \phi\Delta \psi=\int_\Sigma \psi\Delta \phi.
  \end{equation}
  Hence
  \begin{equation}\label{wsol-e3}
  \int_\Sigma     \lf(\frac{\Delta \eta }{ |   H | }-\psi\ri)\Delta \phi=0
  \end{equation}
  for all $\phi\in \wt W^{2,2}(\Sigma)$. Recall that, for any $\zeta\in L^2(\Sigma)$ with $\int_\Sigma \zeta=0$, there is $\phi\in  \wt W^{2,2}(\Sigma)$ with $\Delta\phi=\zeta$. So \eqref{wsol-e3} implies that
  $$
  \int_\Sigma     \lf(\frac{\Delta \eta }{ |   H | }-\psi\ri)\zeta=0
  $$
  for all $\zeta\in L^2(\Sigma)$ with $\int_\Sigma \zeta=0$. Therefore,
  $$
  \frac{\Delta \eta }{ |  H | }-\psi
  = C_4
  $$
  for some constant $ C_4$.
    Since $\eta, \psi\in W^{2,2}(\Sigma)$, we know $\eta\in W^{4,2}(\Sigma)$ by \cite[Theorem 9.19]{Gilbarg-Trudinger}.
 This, together with the fact that $ \Delta \psi = h $, implies
  \be \label{lcriticaleq}
  \Delta \lf( \frac{\Delta \eta }{ |   H | } \ri) - \la H_0 \sigma - \Pi_0, \nabla^2 \eta\ra +
  \div_\Sigma(|  H | \nabla\eta)   = f .
  \ee
    If $f\in C_0^{k-4,\alpha}$, then it is easy to see that $\eta\in C_0^{k,\alpha}$ by bootstrap and the fact that $\eta\in \wt W^{4,2}(\Sigma)$. Hence, $D_\tau \mH_0|_{(0,0)}$ is surjective.
Theorem     \ref{criticalptsexist-t1}  is now proved.
\end{proof}

\begin{rem}
Suppose $ \Sigma $ is a closed connected surface in $ \R^n $ ($n\ge3$) with
second fundamental form $ \Pi_0 $ and  positive mean curvature $H_0$.
By Corollary \ref{2ndvarinrn},
the equation
\be \label{lcriticaleq-2}
  \Delta \lf( \frac{\Delta \eta }{   H_0  } \ri) - \la H_0 \sigma - \Pi_0, \nabla^2 \eta\ra +
  \div_\Sigma( H_0  \nabla\eta)   = 0
\ee
has a nontrivial kernel on $ \Sigma $ which consists of all functions $ \eta = a_0 + \sum_{i=1}^n a_i x^i $,
where $a_0, a_1, \ldots, a_n $ are arbitrary constants and $ x_1, \ldots, x_n $ are coordinate functions
on $ \R^n$.
\end{rem}

Part (2) of Theorem \ref{Mainresult} now follows directly from Theorem \ref{criticalptsexist-t1}
and Theorem \ref{Mainresult-1}.

\bibliographystyle{amsplain}

\end{document}